\newtheorem{definition}{Definition}
\newtheorem{remark}{Remark}
\newtheorem{lemma}{Lemma}
\newtheorem{theorem}{Theorem}
\newtheorem{corollary}{Corollary}
\newtheorem{conj}{Conjecture}
\newtheorem{algo}{Algorithm}
\newtheorem{exmp}{Example}
\theoremstyle{plain}
\newtheorem*{proof}{Proof}
\newcommand{\E}{\mathrm{e}}
\definecolor{myblue}{rgb}{0.0,0.6056,0.9787}
\definecolor{myred}{rgb}{0.8889,0.4356,0.2781}
\definecolor{mypink}{rgb}{0.7644,0.4441,0.8242}
\newcommand{\qed}{\hfill \ensuremath{\Box}}
\newcommand{\f}{\mathbb}
\newcommand{\ol}{\overline}
\newcommand{\cu}{\subseteq}
\newcommand{\wt}{\widetilde}
\newcommand{\serie}[1]{\{#1_{n}\}_n}
\newcommand{\GLT}{\sim_{GLT}}
\newcommand{\ve}{\varepsilon}
\DeclareMathOperator{\diag}{diag}
\renewcommand{\epsilon}{\varepsilon}
\renewcommand{\aa}{{\boldsymbol a}}
\newcommand{\bb}{{\boldsymbol b}}
\newcommand{\ii}{{\boldsymbol i}}
\newcommand{\nn}{{\boldsymbol n}}
\newcommand{\xx}{{\boldsymbol x}}
\newcommand{\vv}{{\boldsymbol v}}
\newcommand{\bu}{{\mathbf 1}}
\newcommand{\II}{{\mathcal I}}
\title{Theoretical results for eigenvalues, singular values, and eigenvectors of (flipped) Toeplitz matrices and related computational proposals}
\date{\today}
\author{
Giovanni Barbarino\thanks{giovanni.barbarino@aalto.fi}\\ Department of Mathematics and Systems Analysis, Aalto University
\and
Sven-Erik Ekstr\"om\thanks{sven-erik.ekstrom@it.uu.se}\\ Division of Scientific Computing, Department of Information Technology, Uppsala University
\and 
Stefano Serra-Capizzano\thanks{s.serracapizzano@uninsubria.it} \\ Department of Science and High Technology,  Insubria University - Como \\ INdAM Research Unit at Department of Science and High Technology,  Insubria University - Como\\ Division of Scientific Computing, Department of Information Technology, Uppsala University
\and 
Paris Vassalos\thanks{pvassal@aueb.gr} \\ Department of Informatics, Athens University of Economics and Business}
\begin{document}

\maketitle
\begin{abstract}
In a series of recent papers the spectral behavior of the matrix sequence $\{Y_nT_n(f)\}$ is studied in the sense of the spectral distribution, where $Y_n$ is the main antidiagonal (or flip matrix) and $T_n(f)$ is the Toeplitz matrix generated by the function $f$, with $f$ being Lebesgue integrable and with real Fourier coefficients. This kind of study is also motivated by computational purposes for the solution of the related large linear systems using the (preconditioned) MINRES algorithm. Here we complement the spectral study with more results holding both asymptotically and for a fixed dimension $n$, and with regard to eigenvalues, singular values, and eigenvectors of $T_n(f), Y_nT_n(f)$ and to several relationships among them: beside fast linear solvers, a further target is the design of ad hoc procedures for the computation of the related spectra via matrix-less algorithms, with a cost being linear in the number of computed eigenvalues. We emphasize that the challenge of the case of non-monotone generating functions is considered in the current work, for which the previous matrix-less algorithms fail. Numerical experiments are reported and commented, with the aim of showing in a visual way the theoretical analysis.
\end{abstract}

\section{Introduction}
In a number of recent papers \cite{Ferrari2019,Hon2019,Mazza2018} the spectral behavior of the matrix-sequence  $\{Y_nT_n(f)\}$ is studied in the sense of the spectral distribution, where
\begin{equation*}
Y_n=\begin{bmatrix*}
&&&&1\\
&&&1\\
&&\iddots\\
&1\\
1
\end{bmatrix*}
\end{equation*}
is the main antidiagonal or flip matrix and $T_n(f)$ is the Toeplitz matrix generated by the symbol $f$, with $f$ being Lebesgue integrable and with real Fourier coefficients. Of course the singular values of $T_n(f)$ and $Y_n T_n(f)$ coincide, given the unitary character of the permutation matrix $Y_n$. This study has been complemented by the same type of analysis in the a multilevel context, where additional technical issues have been addressed \cite{ferrari2020,mazzapestana20,pestana}, taking into account the specific difficulties of the multilevel setting.

In this work we focus our attention on studying the  eigenvalues, singular values, and eigenvectors of $T_n(f)$ and of the resulting Hankel matrices $Y_nT_n(f)$, both asymptotically  and for a fixed dimension $n$.  In particular we study the spectral relationship among the Toeplitz matrix $T_n(f)$, the matrix $Y_nT_n(f)$, and the generating function $f$, and we furnish a more precise description of eigenvalues and eigenvectors of $Y_nT_n(f)$ than in the previous literature, using also quite old results on the eigenstructure of Toeplitz matrices \cite{Cantoni1976,Delsarte}.

The practical target relies in designing ad hoc procedures for the computation of the related spectra via matrix-less algorithms (see \cite{matrix-less1} and references therein), with a cost being linear in the number of computed eigenvalues. Here the novelty relies in considering non-monotone generating functions, for which the previous matrix-less procedures usually fail; see \cite{EFS,matrix-less1,EGS} and references therein. Furthermore, this type of study is also motivated by other computational purposes such as the solution of the related large linear systems, using the (preconditioned) MINRES algorithm (see \cite{Ferrari2019,ferrari2020,mazzapestana20,pestana} and references therein).

A careful selection of numerical tests is considered and the numerical experiments confirm the precise forecasts contained in the theoretical derivations.

The current work is organized as follows. In Section \ref{sec:prel} we introduce the basic notions and we set the notation. Section \ref{sec:main} contains the theoretical analysis, while related numerical experiments are discussed in Section \ref{sec:num}. Finally Section \ref{sec:end} is concerned with conclusions and open problems.

\section{Notation and Basic Notions}\label{sec:prel}

The present section is divided into four parts. In Subsection \ref{ssec:toe} we report the definition of Toeplitz matrices  and of the notion of generating function; Subsection \ref{ssec:hank} contains the analogous setting for Hankel matrices; finally Subsection   \ref{ssec:spectral} is devoted to the notions of eigenvalue and singular value distribution, while in Subsection \ref{ssec:aux} we state and prove preliminary results that will be used in the theoretical analysis.

\subsection{Toeplitz Matrices and Matrix-Sequences}\label{ssec:toe}

Let $f\in L^1(-\pi,\pi)$ and let $T_n(f)$ be the Toeplitz matrix generated by $f$, i.e.,
$\left(T_n(f)\right)_{s,t}=\hat f_{s-t}$, $s,t=1,\ldots,n$, with $f$ being the generating function of $\{T_n(f)\}$ and
with $\hat f_k$ being the $k$-th Fourier coefficient of $f$, that is,
\begin{equation}\label{fou}
\hat f_k=\frac{1}{2\pi}\int_{-\pi}^{\pi} f(\theta)\ e^{-\mathbf{i}k \theta}\, \mathrm{d}\theta,\ \ \ {\bf i}^2=-1, \ \ k\in \mathbb Z.
\end{equation}
If $f$ is real-valued then several spectral properties are known (localization, extremal behavior, collective distribution, see \cite{BS,Se1} and references therein) and $f$ is also the spectral symbol of $\{T_n(f)\}$ in the Weyl sense \cite{BS,GS,Ti1,Tyuni}.
If $f$ is complex-valued, then the same type of information is transferred to the singular values, while the eigenvalues can have a ``wild'' behavior~\cite{cluster} in some cases and a quite regular behavior in other cases \cite{Ti-complex}. More advanced material on distribution results are collected in the books on Generalized Locally Toeplitz matrix sequences \cite{GLT-book-I,GLT-book-II}.

\subsection{Hankel Matrices and Matrix-Sequences}\label{ssec:hank}

The standard definition~\cite[Section 1.4]{BS} of Hankel matrices generated by a function $f$ concerns the two matrices,
\begin{equation}
H_n^{(1)}(f)=[\hat{f}_{i+j-1}]_{i,j=1}^{n},\qquad
H_n^{(2)}(f)=[\hat{f}_{-(i+j-1)}]_{i,j=1}^{n},
\end{equation}
or
\begin{equation*}
H_n^{(1)}(f)=\begin{bmatrix*}
\hat{f}_1&\hat{f}_2&\hat{f}_3&\cdots\\
\hat{f}_2&\hat{f}_3&\ddots&\ddots\\
\hat{f}_3&\ddots&\ddots&\ddots\\
\vdots&\ddots&\ddots&\ddots
\end{bmatrix*},\qquad
H_n^{(2)}(f)=\begin{bmatrix*}
\hat{f}_{-1}&\hat{f}_{-2}&\hat{f}_{-3}&\cdots\\
\hat{f}_{-2}&\hat{f}_{-3}&\ddots&\ddots\\
\hat{f}_{-3}&\ddots&\ddots&\ddots\\
\vdots&\ddots&\ddots&\ddots
\end{bmatrix*},
\end{equation*}
with $\hat f_k$, $k\in \mathbb Z$, as in (\ref{fou}).

Here we treat a different setting and we define the Hankel matrix $H_n(f)$, generated by the function $f$, as
\begin{equation*}
H_n(f)=Y_nT_n(f)=H_n(1)T_n(f),
\end{equation*}
where $T_n(f)$ is the Toeplitz matrix generated by $f$ and
\begin{equation*}
Y_n=H_n(1)=\begin{bmatrix*}
&&&&1\\
&&&1\\
&&\iddots\\
&1\\
1
\end{bmatrix*},
\end{equation*}
is the antidiagonal or ``flip matrix'', of size $n$. The matrix $Y_n$ is a permutation matrix and hence it is unitary so that the singular values of $T_n(f)$ and $Y_nT_n(f)$ coincide, while for the eigenvalues there is a substantial (and computationally beneficial) change; see \cite{Ferrari2019,
ferrari2020,Hon2019,mazzapestana20,pestana} and references therein.

\subsection{Spectral and Singular Value Distributions}\label{ssec:spectral}

We now consider previous results concerning spectral distributions in the sense of Weyl. First we introduce some notations and definitions concerning general sequences of matrices. For any function $F$ defined on the complex field and
for any matrix $A_n$ of size $d_n$, by the symbol $\Sigma_{\lambda}(F,A_n)$, we denote the mean
\begin{equation*}
\Sigma_{\lambda}(F,A_n)=\frac{1}{d_n} \sum_{j=1}^{d_n}
F[\lambda_j(A_n)],
\end{equation*}
 while, by the symbol $\Sigma_{\sigma}(F,A_n)$, we denote the mean
\begin{equation*}
\Sigma_{\sigma}(F,A_n)=\frac{1}{d_n} \sum_{j=1}^{d_n}
F[\sigma_j(A_n)].
\end{equation*}
\begin{definition}
\label{def:eig-sv distr}
Given a sequence $\{A_n\}$ of matrices of size $d_n$ with $d_n<d_{n+1}$ and given a Lebesgue-measurable function $\psi$ defined over a measurable set $K\subset {\mathbb R}^{\nu}$, $\nu \in \mathbb N^+$, of finite and positive Lebesgue measure $\mu_{\nu}(K)$, we say that $\{A_n\}$ is distributed as $(\psi,K)$ in the sense of the eigenvalues if for any continuous $F$ with bounded support the following limit relation holds
\begin{equation}\label{distribution:eig}
\lim_{n\rightarrow \infty}\Sigma_{\lambda}(F,A_n)= \frac{1}{\mu_{\nu}(K)}\int_K F(\psi)\,\mathrm{d}\mu_{\nu}.
\end{equation}
In this case, we write in short $\{A_n\}\sim_{\lambda} (\psi,K)$. Furthermore we say that $\{A_n\}$ is distributed as $(\psi,K)$ in the sense of the singular values if for any continuous $F$ with bounded support the following limit relation holds
\begin{equation}
\label{distribution:sv}
\lim_{n\rightarrow \infty}\Sigma_{\sigma}(F,A_n)= \frac{1}{\mu_{\nu}(K)}\int_K F(|\psi|)\,\mathrm{d}\mu_{\nu}.
\end{equation}
In this case, we write in short $\{A_n\}\sim_{\sigma} (\psi,K)$, which is equivalent to $\{A_n^*A_n\}\sim_{\lambda} (|\psi|^2,K)$.

When the set $K$ is clear from the context, instead of $\{A_n\}\sim_{\lambda} (\psi,K)$, $\{A_n\}\sim_{\sigma} (\psi,K)$, we will write
$\{A_n\}\sim_{\lambda} \psi$, $\{A_n\}\sim_{\sigma} \psi$, respectively.
\end{definition}

In Remark~\ref{rem:meaning-distribution} we provide an informal meaning of the notion of eigenvalue distribution. For the singular value distribution similar statements can be written.

\begin{remark}\label{rem:meaning-distribution}
The informal meaning behind the above definition is the following. If $\psi$ is continuous, $n$ is large enough, and
\begin{equation*}
\left\{{\bf x}_j^{(d_n)},\ j=1,\ldots, d_n\right\}
\end{equation*}
is an equispaced grid on $K$, then a suitable ordering $\lambda_j(A_n)$, $j=1,\ldots,d_n$, of the eigenvalues of $A_n$ is such that the pairs $\left\{\left({\bf x}_j^{(d_n)},\lambda_j(A_n)\right),\ j=1,\ldots,d_n\right\}$ reconstruct approximately the hypersurface
\begin{equation*}
\{({\bf x},\psi({\bf x})),\ {\bf x}\in K\}.
\end{equation*}
 In other words, the spectrum of $A_n$ `behaves' like a uniform sampling of $\psi$ over $K$, up to few outliers.
For instance, if
$\nu=1$, $d_n=n$, and $K=[a,b]$, then the eigenvalues of $A_n$ are approximately equal to $\psi(a+j(b-a)/n)$, $j=1,\ldots,n$, for $n$ large enough and up to at most $o(n)$ outliers.
Analogously, if
$\nu=2$, $d_n=n^2$, and $K=[a_1,b_1]\times [a_2,b_2]$, then the eigenvalues of $A_n$ are approximately equal to $\psi(a_1+j(b_1-a_1)/n,a_2+k(b_2-a_2)/n)$, $j,k=1,\ldots,n$, for $n$ large enough and up to at most $o(n^2)$ outliers.
In general, when the symbol $\psi$ is smooth enough, the number of outliers reduce and can decrease to $O(1)$: for instance, for Hermitian Toeplitz matrix sequences having generating function real-valued a.e. and with the range being a unique interval, the number of outliers is simply zero.
\end{remark}

The asymptotic distribution of eigen and singular values of a sequence of Toeplitz matrices has been thoroughly studied in the last century (for example see \cite{BS,tyrtL1}\ and the references reported therein).
The starting point of this theory, which contains many extensions and other results, is a famous theorem of Szeg\H{o}~\cite{GS}, which we report in the
Tyrtyshnikov and Zamarashkin version~\cite{tyrtL1}.
\begin{theorem}
\label{teoszego-tyr}
If $f$ is integrable over $[-\pi,\pi]$, and if $\{T_n(f)\}$ is the sequence of
Toeplitz matrices generated by $f$, then
\begin{equation}
\label{tesityr}
\{ T_n(f)\}\sim_{\sigma} (f,[-\pi,\pi]).
\end{equation}
Moreover, if $f$ is also real-valued almost everywhere (a.e.), then each matrix $T_n(f)$
is Hermitian and
\begin{equation}
\label{tesiszego}
\{T_n(f)\}\sim_{\lambda} (f,[-\pi,\pi]).
\end{equation}
\end{theorem}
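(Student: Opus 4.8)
The plan is to establish the statement first for trigonometric polynomials, where the matrices are banded and everything is finite-dimensional, and then to transfer it to an arbitrary $f\in L^1(-\pi,\pi)$ by a density argument, invoking the approximating-classes-of-sequences (a.c.s.) machinery (see Subsection~\ref{ssec:aux} and \cite{GLT-book-I}) together with a sharp trace-norm bound for Toeplitz matrices. Throughout I write $\|\cdot\|$ for the spectral norm and $\|\cdot\|_{\mathrm{tr}}$ for the trace (nuclear) norm.

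\emph{Step 1 (trigonometric polynomials, moment method).} For trigonometric polynomials $g,h$ a direct inspection of the band structure gives $T_n(g)T_n(h)=T_n(gh)+R_n$, where $\mathrm{rank}(R_n)$ and $\max_{s,t}|(R_n)_{s,t}|$ are bounded by constants depending only on the degrees of $g$ and $h$. Iterating, for a trigonometric polynomial $f$ one obtains $T_n(f)^k=T_n(f^k)+R_{n,k}$ with $\mathrm{rank}(R_{n,k})$ and $\|R_{n,k}\|$ bounded uniformly in $n$, so that
\begin{equation*}
\frac1n\,\mathrm{tr}\bigl(T_n(f)^k\bigr)=\frac1n\,\mathrm{tr}\bigl(T_n(f^k)\bigr)+o(1)=\widehat{f^k}_0+o(1)=\frac{1}{2\pi}\int_{-\pi}^{\pi}f(\theta)^k\,\mathrm{d}\theta+o(1).
\end{equation*}
If $f$ is a \emph{real} trigonometric polynomial, then $T_n(f)$ is Hermitian, $\mathrm{tr}(T_n(f)^k)=\sum_j\lambda_j(T_n(f))^k$, and all its eigenvalues lie in the fixed compact interval $[\min f,\max f]$; hence the $k$-th moments of the normalized eigenvalue counting measures converge for every $k\ge0$ to those of the push-forward of $\mathrm{d}\theta/(2\pi)$ under $f$, a compactly supported (hence moment-determinate) measure. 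By the method of moments this forces weak convergence of the counting measures, that is $\{T_n(f)\}\sim_\lambda(f,[-\pi,\pi])$. For a general (complex) trigonometric polynomial $f$ I apply this to the real trigonometric polynomial $|f|^2$ and to the Hermitian matrices $T_n(f)^*T_n(f)=T_n(\bar f)T_n(f)=T_n(|f|^2)+R'_n$ with $\mathrm{rank}(R'_n)=O(1)$: a finite-rank perturbation does not affect the eigenvalue distribution, so $\{T_n(f)^*T_n(f)\}\sim_\lambda(|f|^2,[-\pi,\pi])$, which is precisely $\{T_n(f)\}\sim_\sigma(f,[-\pi,\pi])$ by Definition~\ref{def:eig-sv distr}.

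\emph{Step 2 (passage to $L^1$).} I would first record the estimate $\|T_n(g)\|_{\mathrm{tr}}\le\frac{n}{2\pi}\int_{-\pi}^{\pi}|g(\theta)|\,\mathrm{d}\theta$, valid for every $g\in L^1(-\pi,\pi)$: with $w(\theta)=(\E^{-\mathbf{i}\theta},\E^{-2\mathbf{i}\theta},\dots,\E^{-n\mathbf{i}\theta})^{T}$ one has $T_n(g)=\frac{1}{2\pi}\int_{-\pi}^{\pi}g(\theta)\,w(\theta)w(\theta)^*\,\mathrm{d}\theta$, each $w(\theta)w(\theta)^*$ being positive semidefinite of trace $n$, so the triangle inequality for the trace norm (in integral form) yields the bound. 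Now choose trigonometric polynomials $p_m$ with $\|f-p_m\|_{L^1}\to0$ (trigonometric polynomials being dense in $L^1$); then $T_n(f)-T_n(p_m)=T_n(f-p_m)$ satisfies $\frac1n\|T_n(f-p_m)\|_{\mathrm{tr}}\le\frac{1}{2\pi}\|f-p_m\|_{L^1}\to0$ as $m\to\infty$, uniformly in $n$. Splitting the singular value decomposition of $T_n(f-p_m)$ into the parts with singular values above and below $\|f-p_m\|_{L^1}^{1/2}$ shows that $\{T_n(p_m)\}_n$ is an a.c.s. for $\{T_n(f)\}_n$; since $\{T_n(p_m)\}_n\sim_\sigma(p_m,[-\pi,\pi])$ by Step~1 and $p_m\to f$ in measure, the a.c.s. convergence theorem for singular values gives \eqref{tesityr}. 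If moreover $f$ is real-valued a.e., then $T_n(f)$ is Hermitian (since $\hat f_{-k}=\overline{\hat f_k}$), I take the $p_m$ to be real trigonometric polynomials (for instance the Fej\'er means of $f$), so that $T_n(p_m)$ and $T_n(f-p_m)$ are Hermitian, $\{T_n(p_m)\}_n\sim_\lambda(p_m,[-\pi,\pi])$ by Step~1, and the a.c.s. convergence theorem for eigenvalues of Hermitian matrix-sequences yields \eqref{tesiszego}.

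\emph{Where the difficulty lies.} The banded-matrix bookkeeping of Step~1 and the moment-determinacy argument are routine; the genuinely delicate point is the $L^1$ extension. One cannot reduce the singular value statement for $f\in L^1$ to a Hermitian eigenvalue statement for the symbol $|f|^2$, because $|f|^2$ need not be integrable; the argument must instead run through the uniform smallness, in normalized trace norm, of the perturbation $T_n(f-p_m)$, which is exactly what the sharp inequality $\|T_n(g)\|_{\mathrm{tr}}\le\frac{n}{2\pi}\|g\|_{L^1}$ provides, and then through the a.c.s. theory. Verifying that this theory applies in the form needed here — that a perturbation small in normalized trace norm decomposes into a small-rank plus a small-norm term, and that convergence of the symbols in measure is the correct hypothesis — is the step that requires care, but all the necessary tools are available from Subsection~\ref{ssec:aux} (equivalently, from \cite{GLT-book-I}).
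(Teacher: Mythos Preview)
The paper does not actually prove this theorem: it is quoted as a classical result, attributed to Szeg\H{o} \cite{GS} in the version of Tyrtyshnikov and Zamarashkin \cite{tyrtL1}, and no proof is given. Your outline is correct and is essentially the standard modern proof one finds in the cited literature (in particular \cite{GLT-book-I,tyrtL1}): first the banded case via moments, then the $L^1$ extension through the trace-norm inequality $\|T_n(g)\|_{\mathrm{tr}}\le\frac{n}{2\pi}\|g\|_{L^1}$ and the a.c.s.\ machinery. So there is nothing to compare against in the paper itself, but your argument matches the route taken in the references the paper relies on.
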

\ \\
On the other hand, if $f$ is real-valued a.e., then very precise localization results are known. In fact, in that case all the eigenvalues of $T_n(f)$ belong to the open interval $(m,M)$, where $m$ and $M$ are the essential infimum and the essential supremum of $f$, respectively, under the assumption that $f$ is not constant a.e. In the general case where $f$ is constant a.e., the result is trivial since $T_n(f)\equiv mI_n$ for every matrix order $n$, with $I_n$ being the identity matrix of size $n$ (see \cite{local2,Se1}). In any case, with regard to Remark \ref{rem:meaning-distribution}, in this setting we do not observe the presence of outliers.

First we introduce the notion of equal distribution regarding (at least) two  sequences of numerical sets of increasing cardinality. Then we state a selection of results which emphasize the relationships among equal distribution, uniform gridding, and spectral distribution of matrix-sequences (see also \cite{gridding}). Part of the related material is taken from \cite{SerraCapizzano2001} and will be used in our subsequent derivations.

\begin{definition}\label{def:equal distr}
Two sequences $\{X_n\}$ and $\{Y_n\}$ of numerical sets with $X_n=\left\{{ x}_j^{(d_n)},\ j=1,\ldots, d_n\right\}$ and $Y_n=\left\{{y}_j^{(d_n)},\ j=1,\ldots, d_n\right\}$ are equally distributed if for any continuous $F$ with bounded support the following limit relation holds
\begin{equation}\label{eq distribution}
\lim_{n\rightarrow \infty}{\frac 1 {d_n} \sum_{j=1}^{d_n}
F\left({ x}_j^{(d_n)}\right)-F\left({y}_j^{(d_n)}\right)}= 0.
\end{equation}
 In the case where the two sequences of sets $\{X_n\}$ and $\{Y_n\}$ are made up by the spectra of two sequences of matrices $\{A_n\}$ and $\{B_n\}$ we write that the two sequences of matrices are spectrally equally distributed, while the two sequences are equally distributed in the singular value sense if (\ref{eq distribution}) holds true and the two sequences of sets $\{X_n\}$ and $\{Y_n\}$ are made up by the sets of singular values of two sequences of matrices $\{A_n\}$ and $\{B_n\}$.
\end{definition}

\begin{remark}\label{rem:link-distribution}
Of course, by playing with the given definitions,  in the case where two sequences of matrices $\{A_n\}$ and $\{B_n\}$ are spectrally equally distributed, we have $\{A_n\}\sim_{\lambda} (\psi,K)$ if and only if $\{B_n\}\sim_{\lambda} (\psi,K)$. Furthermore,  in the case where two sequences of matrices $\{A_n\}$ and $\{B_n\}$ are equally distributed in the singular value sense, we have $\{A_n\}\sim_{\sigma} (\psi,K)$ if and only if $\{B_n\}\sim_{\sigma} (\psi,K)$.
\end{remark}

\begin{definition}\label{def:uniform gridding}
A grid of points $\{X_n\}$, $X_n=\left\{{ x}_j^{(d_n)},\ j=1,\ldots, d_n\right\}$, is asymptotically uniform (a.u.) in $[a,b]$ if and only if
$\{X_n\}$ and $\{U_n\}$ are equally distributed with $U_n=\left\{{ u}_j^{(d_n)}=a+(b-a)j/d_n,\ j=1,\ldots, d_n\right\}$.

More in general, a grid of points $\{X_n\}$, $X_n=\left\{{\bf x}_j^{(d_n)},\ j=1,\ldots, d_n\right\}$, is a.u. in a Peano-Jordan measurable set $K$, contained in ${\mathbb R}^d$ and of positive measure, if and only if for any $d$ dimensional rectangle $R$ contained in $K$
\begin{equation}\label{eq distribution-grid}
\lim_{n\rightarrow \infty}{\frac 1 {d_n} \sum_{j=1}^{d_n}
{\rm card}\left\{{\bf x}_j^{(d_n)}\in R\right\}}=\frac {\mu_{d}(R)} {\mu_d(K)},
\end{equation}
with $\mu_d$ being the Lebesgue measure on ${\mathbb R}^d$.
\end{definition}

If we assume that $X_n=\left\{{ x}_j^{(d_n)},\ j=1,\ldots, d_n\right\}$, is a.u. in $[a,b]$ and, as it is natural
$a\le { x}_1^{(d_n)}< { x}_2^{(d_n)}< \cdots < { x}_{d_n}^{(d_n)} \le b$, $j=1,\ldots, d_n$,  then
\begin{equation}\label{asym unif gridding}
\lim_{n\to\infty}\biggl(\max_{j=1,\ldots,d_n}\left\|{ x}_j^{(d_n)}-\Bigl(a+ j\,\frac{b-a}{d_n}\Bigr)\right\|_\infty\biggr)=0.
\end{equation}

\subsection{Auxiliary Results}\label{ssec:aux}

In the current subsection we first introduce and prove auxiliary results and then we collect known results from the literature. The presented theoretical tools are useful in the main theoretical derivations in Section \ref{sec:main}.

\begin{lemma}\label{lem:graph}
	Let $X$ be a finite set, and let $A_1,\dots,A_k$ and $B_1,\dots,B_k$ be two partitions of $X$ with $|A_i| = |B_i|$ for every $i$. Let $\mathscr G = (V,E)$ be a directed graph on $k$ nodes that has a directed edge $(i,j)\in E$ if and only if $A_i\cap B_j$ is not empty.
	If $(i,j)\in E$ then there exists a directed path from $j$ to $i$.
\end{lemma}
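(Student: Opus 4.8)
The plan is to argue by a counting/pigeonhole argument on the sizes of cumulative unions of the parts. Suppose $(i,j)\in E$, i.e.\ $A_i\cap B_j\neq\emptyset$. Let $R\subseteq\{1,\dots,k\}$ be the set of all indices $r$ such that there is a directed path from $j$ to $r$ in $\mathscr G$; note $j\in R$ (trivial path) and we want to show $i\in R$. Assume for contradiction that $i\notin R$.

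The key observation is that $R$ is ``closed under the edge relation'' in the following sense: if $r\in R$ and $(r,s)\in E$, then $s\in R$, because we can extend the path from $j$ to $r$ by the edge $(r,s)$. Contrapositively, there is \emph{no} edge from $R$ to its complement, which by the definition of $\mathscr G$ means $A_r\cap B_s=\emptyset$ for every $r\in R$ and every $s\notin R$. Equivalently, $\bigcup_{r\in R}A_r$ is disjoint from $\bigcup_{s\notin R}B_s$, hence $\bigcup_{r\in R}A_r\subseteq\bigcup_{s\in R}B_s$. Taking cardinalities and using that both families are partitions with $|A_i|=|B_i|$, we get
\begin{equation*}
\sum_{r\in R}|A_r|=\Bigl|\bigcup_{r\in R}A_r\Bigr|\le\Bigl|\bigcup_{s\in R}B_s\Bigr|=\sum_{s\in R}|B_s|=\sum_{r\in R}|A_r|,
\end{equation*}
so equality holds throughout; in particular $\bigcup_{r\in R}A_r=\bigcup_{s\in R}B_s$ as sets.

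Now I would derive the contradiction from the edge $(i,j)$. Since $j\in R$ and $i\notin R$, the element picked from $A_i\cap B_j$ lies in $A_i$ (a part with index outside $R$, hence disjoint from $\bigcup_{r\in R}A_r$) but also in $B_j\subseteq\bigcup_{s\in R}B_s=\bigcup_{r\in R}A_r$ — a contradiction. Therefore $i\in R$, i.e.\ there is a directed path from $j$ to $i$, which is exactly the claim. \qed

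The only step requiring care is the passage from ``no edges leave $R$'' to the set equality $\bigcup_{r\in R}A_r=\bigcup_{s\in R}B_s$: one must use that the $A$'s partition $X$ and the $B$'s partition $X$, so the complementary unions are genuine complements, and then the cardinality count forces equality rather than mere inclusion. This is elementary but is the crux of the argument; everything else is bookkeeping with the reachability set $R$.
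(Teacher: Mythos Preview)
Your proof is correct and takes essentially the same approach as the paper: both argue by contradiction using the reachability set $R=N^j$ of nodes with a directed path from $j$, observe that no edges leave $R$, and obtain a contradiction from the cardinality constraint $\sum_{r\in R}|A_r|=\sum_{r\in R}|B_r|$ together with the extra element in $A_i\cap B_j$. The paper phrases the final step as a strict inequality $\sum_{y\in N^j}|B_y|\ge\sum_{x,y\in N^j}|A_x\cap B_y|+|A_i\cap B_j|$, while you derive the set equality $\bigcup_{r\in R}A_r=\bigcup_{s\in R}B_s$ first and then locate the contradicting element; these are cosmetic variants of the same counting argument.
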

\begin{proof}
Suppose that $(i,j)\in E$ but that there does not exist a directed path from $j$ to $i$. As a consequence, the set of nodes
\[
N_i \coloneqq \{ \text{nodes with a direct path to }i  \},\quad
N^j \coloneqq \{ \text{nodes with a direct path from }j  \}
\]
are disjoint, where by convention we let $i\in N_i$, $j\in N^j$. Moreover, there is no edge from $N^j$ to $(N^j)^C$, so
\begin{align*}
\sum_{x\in N^j} |A_x|
&= \sum_{x\in N^j} \sum_{y\in V}|A_x\cap B_y| = \sum_{x\in N^j} \sum_{y\in N^j}|A_x\cap B_y|, \\
\sum_{y\in N^j} |B_y|
&= \sum_{y\in N^j} \sum_{x\in V}|A_x\cap B_y| \ge \sum_{x\in N^j} \sum_{y\in N^j}|A_x\cap B_y| +
|A_i\cap B_j|,
\end{align*}
that is a contradiction since $\sum_{x\in N^j} |A_x|   = \sum_{y\in N^j} |B_y| $ and $|A_i\cap B_j|>0$.

\qed
\end{proof}

\begin{lemma}\label{lem:splitting}
Let $\Lambda^{(n)} \coloneqq \{\lambda^{(n)}_1, \lambda^{(n)}_2, \dots, \lambda_{d_n}^{(n)} \}$ a sequence of $d_n$ real values for any $n\in\f N$ with $d_n\to\infty$, and $D_n\coloneqq\diag(\lambda^{(n)}_i)_{i=1,\dots,d_n}$.
Given a diagonal  $k\times k$ matrix-valued measurable function $H(x) \coloneqq \diag(f_j(x))_{j=1,\dots,k}$, where $f_j:[0,1]\to \f R$, suppose that $\{D_n\}_n\sim_\lambda H(x)$.
Then for any $n$
and for any sequence of integer numbers $L^{(n)}_j$ such that $L^{(n)}_j/d_n\to 1/k$ and $\sum_j L^{(n)}_j = d_n$,
 there exists a partition of $\Lambda^{(n)}$ into $k$ subset $\Lambda^{(n)}_1,\dots,\Lambda^{(n)}_k$ such that, for every $j=1,\dots,k$, we have
\begin{itemize}
	\item $L^{(n)}_j$ is
	the cardinality  of $\Lambda^{(n)}_j$,
	\item furthermore
	\begin{equation}\label{eq:splitting}
			\{D^{(j)}_n\}_n\coloneqq\left\{\diag\left(\lambda^{(n)}_{i}\right)_{\lambda^{(n)}_{i}\in \Lambda^{(n)}_j}\right\}_n \sim_\lambda f_j(x).
	\end{equation}
\end{itemize}	
Moreover, if $f_j$ are all Riemann integrable with connected range, and for any $n$,  $j\in \{1,\dots,k\}$ and $\lambda\in \wt \Lambda^{(n)}_j$
$$\min_{x\in [0,1]} f_j(x) - c_n \le \lambda \le \max_{x\in [0,1]} f_j(x) + c_n$$
holds for some $c_n\to 0$ and a partition of $\Lambda^{(n)}$ into $\wt \Lambda^{(n)}_j$ of cardinality $L^{(n)}_j$ satisfying $L^{(n)}_j/d_n\to 1/k$ and $\sum_j L^{(n)}_j = d_n$, then the
$\Lambda^{(n)}_j$ can be chosen so that \eqref{eq:splitting} holds and for any $n$,  $j\in \{1,\dots,k\}$ and $\lambda\in  \Lambda^{(n)}_j$
\begin{align*}
\min_{x\in [0,1]} f_j(x) - \wt c_n \le \lambda \le \max_{x\in [0,1]} f_j(x) +\wt  c_n
\end{align*}
for some $\wt c_n\to 0$.
\end{lemma}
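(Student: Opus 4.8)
My plan is to phrase everything in terms of monotone rearrangements (quantile functions), using that only continuous $F$ with bounded support need be tested. For the first part, I would first rewrite $\{D_n\}_n\sim_\lambda H$ as weak convergence of the empirical measures $\rho_n := \frac1{d_n}\sum_i\delta_{\lambda^{(n)}_i}$ to $\rho := \frac1k\sum_{j=1}^k (f_j)_*\mathrm{Leb}_{[0,1]}$ (no mass escapes to infinity, by testing against continuous plateau functions). Let $\phi_j:[0,1]\to\f R$ be the non-decreasing rearrangement of $f_j$ — monotone, hence Riemann integrable, and equimeasurable with $f_j$. For each $n$ I would introduce the coloured ``target'' multiset $T^{(n)} := \bigsqcup_{j=1}^k T^{(n)}_j$ with $T^{(n)}_j := \{\phi_j(\ell/L^{(n)}_j) : \ell=1,\dots,L^{(n)}_j\}$, the elements of $T^{(n)}_j$ carrying colour $j$. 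Since uniform Riemann sums of $F\circ\phi_j$ converge to $\int F\,\mathrm d\big((f_j)_*\mathrm{Leb}\big)$ and $L^{(n)}_j/d_n\to 1/k$, the empirical measure of $T^{(n)}$ also converges weakly to $\rho$. Then I would sort $\Lambda^{(n)}$ and $T^{(n)}$ increasingly, match the $i$-th smallest element of each, and put $\lambda^{(n)}_i$ into $\Lambda^{(n)}_j$ precisely when its matched target has colour $j$: this is a partition of $\Lambda^{(n)}$ with $|\Lambda^{(n)}_j| = L^{(n)}_j$. For the distribution, viewing the sorted sequences of $\Lambda^{(n)}$ and $T^{(n)}$ as step functions $\Phi^\lambda_n,\Phi^\tau_n$ on $[0,1]$, both converge almost everywhere to the quantile function of $\rho$ (the standard link between weak convergence and a.e.\ convergence of quantile functions), so $F\circ\Phi^\lambda_n - F\circ\Phi^\tau_n\to 0$ a.e.\ and boundedly, and $\int_0^1|F\circ\Phi^\lambda_n - F\circ\Phi^\tau_n|\to 0$ by dominated convergence; restricting to the colour-$j$ indices (and using $L^{(n)}_j/d_n\to1/k$) would give that $\{D^{(j)}_n\}_n$ and $\{\diag(T^{(n)}_j)\}_n$ are spectrally equally distributed in the sense of Definition~\ref{def:equal distr}. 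Since $\{\diag(T^{(n)}_j)\}_n\sim_\lambda\phi_j\sim_\lambda f_j$ by equimeasurability, Remark~\ref{rem:link-distribution} would then deliver \eqref{eq:splitting}.

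For the ``moreover'' part, each $f_j$ is now bounded with range $[m_j,M_j]$, so $T^{(n)}_j\subseteq[m_j,M_j]$ and in the matching every $\lambda\in\Lambda^{(n)}_j$ lies within $|\lambda-\tau|$ of $[m_j,M_j]$, where $\tau$ is its matched target; the hypothesis on $\{\wt\Lambda^{(n)}_j\}$ confines all eigenvalues of $D_n$ to a fixed bounded interval, so dominated convergence gives $\frac1{d_n}\sum_i|\lambda^{(n)}_{(i)}-\tau^{(n)}_{(i)}|\to 0$. Setting $\epsilon_n := \big(\frac1{d_n}\sum_i|\lambda^{(n)}_{(i)}-\tau^{(n)}_{(i)}|\big)^{1/2}$, Markov's inequality shows that the ``bad'' set $S_n$ of those $\lambda\in\Lambda^{(n)}_j$ with $\mathrm{dist}(\lambda,[m_j,M_j])>\epsilon_n$ has $|S_n|\le d_n\epsilon_n = o(d_n)$, while every element off $S_n$ is within $\epsilon_n$ of the range of its colour. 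The remaining — and main — task would be to recolour the $o(d_n)$ bad elements so that each colour again receives exactly $L^{(n)}_j$ elements and each recoloured element lands within $c_n$ of the range of its new colour; since only $o(d_n)$ elements move, \eqref{eq:splitting} is unaffected. Here I would use Lemma~\ref{lem:graph} with $X=\Lambda^{(n)}$, $A_i=\Lambda^{(n)}_i$, $B_i=\wt\Lambda^{(n)}_i$: it shows every edge of the intersection graph lies on a directed cycle, so the colours fall into strongly connected components $C$ with $\bigcup_{i\in C}\Lambda^{(n)}_i=\bigcup_{j\in C}\wt\Lambda^{(n)}_j$ and matching multisets of part sizes inside each $C$; moreover a non-bad element of $\Lambda^{(n)}_a\cap\wt\Lambda^{(n)}_b$ sits within $\max(\epsilon_n,c_n)$ of both $[m_a,M_a]$ and $[m_b,M_b]$, so moving it between those two colours preserves localization. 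Sending each bad element to its $\wt\Lambda$-colour (which lies in the same component) and compensating by shifting non-bad representatives along directed paths of that component so as to restore all part sizes would then give the partition required, with $\wt c_n := \max(\epsilon_n,c_n)\to 0$.

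The first part is essentially bookkeeping once the rearrangement-and-matching device is set up. I expect the recolouring step in the ``moreover'' part to be the real obstacle: one must realize a reassignment of $S_n$ that simultaneously restores every exact cardinality $L^{(n)}_j$, keeps every element inside the $c_n$-neighbourhood of the range of its new colour, and moves only $o(d_n)$ elements — a Hall/flow-type feasibility question whose combinatorial core (that the surpluses and deficits created by emptying the bad elements balance within each strongly connected component, and that distinct components do not interfere) is exactly what Lemma~\ref{lem:graph} supplies. Choosing the rerouted representatives consistently, distinctly, and from edges that actually carry non-bad elements is the delicate point.
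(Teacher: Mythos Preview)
Your first part is correct and takes a genuinely different route from the paper. The paper relies on the GLT machinery: it concatenates the $f_j$ into a single scalar symbol $h$ on $[0,1]$, invokes Theorem~\ref{th:GLT_up_to_permutation} to upgrade $\{D_n\}_n\sim_\lambda h$ to a GLT relation after a permutation, and then uses Lemma~\ref{lem:reduced_GLT_split} to slice off consecutive blocks of prescribed sizes $L^{(n)}_j$. Your quantile-matching construction is more elementary and self-contained: it avoids the external GLT results entirely, replacing them with the standard link between weak convergence and a.e.\ convergence of quantile functions, plus dominated convergence. One small caveat: your target samples $\phi_j(\ell/L^{(n)}_j)$ include $\phi_j(1)$, which may be $+\infty$ if $f_j$ is unbounded above; shifting the grid to $(\ell-\tfrac12)/L^{(n)}_j$ or $\ell/(L^{(n)}_j+1)$ fixes this without affecting the argument.

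For the ``moreover'' part you follow essentially the same strategy as the paper, but you are making it harder than necessary. The paper processes bad elements one at a time: given a bad $x\in\Lambda^{(n)}_{i_0}\cap\wt\Lambda^{(n)}_{i_1}$, Lemma~\ref{lem:graph} supplies a directed path $i_1\to i_2\to\cdots\to i_q\to i_0$, and along it one picks \emph{any} witnesses $x_s\in\Lambda^{(n)}_{i_s}\cap\wt\Lambda^{(n)}_{i_{s+1}}$ and cyclically shifts them. The key observation you are missing is that these witnesses need not be ``non-bad'': since $x_s\in\wt\Lambda^{(n)}_{i_{s+1}}\subseteq R^{i_{s+1}}_{c_n}$, each shifted element automatically lands within $c_n$ of the range of its new colour, so the new bad sets satisfy $\ol E^j\subseteq E^j$ with $|\ol E^{i_0}|=|E^{i_0}|-1$. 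Iterating terminates after at most $\sum_j|E^j|=o(d_n)$ steps. Your batch formulation (send all bad elements at once, then compensate within strongly connected components) is not wrong in principle, but the intersection graph and the available witnesses change after each move, so the iterative version is both cleaner and easier to justify; in particular your worry about choosing witnesses ``from edges that actually carry non-bad elements'' disappears.
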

\begin{proof}
From the hypothesis, $\{D_n\}_n\sim_\lambda H(x)$, that can be rewritten as $\{D_n\}_n\sim_\lambda h(x)$ where $h:[0,1]\to \f R$ is a concatenation of resized versions of $f_j(x)$. In particular, for any $j\in\{1,\dots,k\}$ and $x\in [0,1)$,
\[
h\left( \frac{j-1}{k} + \frac xk \right) \coloneqq f_j(x), \qquad h(1) = f_k(1).
\]
We can thus apply Theorem \ref{th:GLT_up_to_permutation}  and find that after a permutation $\tau_n$ of the diagonal elements $\wt D_n \coloneqq  P_nD_nP_n^T$, we have $\{\wt D_n\}_n \GLT h(x)$. By Lemma \ref{lem:reduced_GLT_split}, we conclude that
\begin{equation*}
\{D^{(j)}_n\}_n\coloneqq \left\{\diag\left(\lambda^{(n)}_{\tau_n\left(L^{(n)}_1 +\dots + L^{(n)}_{j-1} + i\right)} \right)_{i=
    1,\dots,L^{(n)}_j
}\right\}_n \sim_\lambda h(x)|_{[(j-1)/k,j/k]},
\end{equation*}
where $L^{(n)}_{j}$ are all integer numbers such that  $L^{(n)}_{j}/d_n \to 1/k$ for all $j=1,\dots,k$, and  $\sum_{j=1}^k L^{(n)}_{j} = d_n$.
Since $ h(x)|_{[(j-1)/k,j/k]}$ is a rearranged version of $f_j(x)$, then \eqref{eq:splitting} is proved with
\[
\Lambda^{(n)}_j \coloneqq \left\{
\lambda^{(n)}_{\tau_n\left(L^{(n)}_1 +\dots + L^{(n)}_{j-1} + i\right)}
\right\}_{i=1,\dots,L^{(n)}_j}.
\]
Suppose now that
$f_j$ are all Riemann integrable functions with connected range and  that for any $n$,  $j\in \{1,\dots,k\}$ and $\lambda\in \wt \Lambda^{(n)}_j$
$$\min_{x\in [0,1]} f_j(x) - c_n \le \lambda \le \max_{x\in [0,1]} f_j(x) + c_n$$
holds for some $c_n\to 0$ and a partition of $\Lambda^{(n)}$ into $\wt \Lambda^{(n)}_j$ of cardinality $L^{(n)}_j$.
Call $R^j$ the range of the function $f_j$, and $R^j_{\delta}$ its $\delta$ expansion. Notice that both of them are real intervals by hypothesis. We just proved that   $\{D^{(j)}_n\}_n\sim_\lambda f_j$, so we can apply Theorem \ref{th:su}
 and find a sequence of positive values $\wt c_n$ such that $c_n\le \wt c_n \to 0$ and for any $j$,
 \[
 |E^j|\coloneqq
 \left|\Lambda_j^{(n)} \cap ( R^j_{\wt c_n}  )^C \right|
= o(L^{(n)}_{j}) =  o(d_n).
\]
Fix now an element $x \in E^1$. Since $E^j\cu  ( R^j_{\wt c_n}  )^C \cu ( R^j_{ c_n}  )^C\cu ( \wt \Lambda^{(n)}_1  )^C$, then surely $x\not\in \wt \Lambda^{(n)}_1$ and $x\in \wt \Lambda^{(n)}_p$ for some $p\ne 1$. Notice that all hypotheses of Lemma \ref{lem:graph} hold for $A_j = \Lambda^{(n)}_j$ and $B_j = \wt \Lambda^{(n)}_j$, and moreover $(1,p)$ is an edge of the graph due to the element $x$. As a consequence, there must be a directed path from $p$ to $1$, meaning that there are distinct indexes $i_0 = 1, i_1 = p, i_2, i_3, \dots, i_q$ and relative elements $x_0 = x, x_1, x_2, x_3, \dots, x_q$ s.t.\
\[
x_s \in \Lambda^{(n)}_{i_s} \cap \wt \Lambda^{(n)}_{i_{s+1}}, \quad s = 0,1,\dots, q-1,
\qquad x_q \in \Lambda^{(n)}_{i_q} \cap \wt \Lambda^{(n)}_{i_{0}}
\]
As a consequence, we can produce a new partition $\ol \Lambda^{(n)}_j$ of $\Lambda^{(n)}$ with the same cardinalities $L^{(n)}_j$ by removing $x_s$ from $\Lambda^{(n)}_{i_s}$ and adding it to $\Lambda^{(n)}_{i_{s+1}}$ for each $s$, with the convention $i_{q+1} = i_0$. Notice that $x_s\in \wt \Lambda^{(n)}_{i_{s+1}} \cu R^{i_{s+1}}_{ c_n}$, so if now
$|\ol E^j|\coloneqq
\left|\ol \Lambda_j^{(n)} \cap ( R^j_{\wt c_n}  )^C \right|$ we find that $\ol E^j\cu  E^j$ for every $j$ and $|\ol E^1| = |E^1|-1$. We can thus repeat the same procedure for some other element of $\cup_{j} \ol E_j$ iteratively until they are all empty.
With an abuse of notation, let $\Lambda^{(n)}_j$ be the partition generated by the whole procedure, and notice that $\Lambda^{(n)}_j$ differs from the starting partition by at most $k\sum_j|E^j| = o(d_n)$ elements. If
\begin{equation*}
\{ \wt D^{(j)}_n\}_n\coloneqq\left\{\diag\left(\lambda^{(n)}_{i} \right)_{
\lambda^{(n)}_{i} \in \Lambda^{(n)}_j
}\right\}_n,
\end{equation*}
then the difference with the matrices $D^{(j)}_n$, up to an opportune permutation, is of rank $o(d_n)$, so by Corollary 5.2 of \cite{GLT-book-I}, one finds that $\{ \wt D^{(j)}_n\}_n\sim_\lambda f_j$ and that for any $n$,  $j\in \{1,\dots,k\}$ and $\lambda\in  \Lambda^{(n)}_j$,
$$\min_{x\in [0,1]} f_j(x) -\wt c_n \le \lambda \le \max_{x\in [0,1]} f_j(x) + \wt c_n$$
by construction.

\qed
\end{proof}

\begin{lemma}\label{lem:au_grids_up_to_o(n)}
Suppose $\mathcal G_n=\{\xi_{i,n}\}_{i=1,\dots,d_n}$ is an a.u.\ grid on $[0,1]$ with $d_n\to \infty$. If $\mathcal G'_n=\{\xi'_{i,n}\}_{i=1,\dots,d'_n}$ is still a grid on $[0,1]$ with $|\mathcal G_n \triangle \mathcal G'_n| = o(d_n)$, then $\mathcal G'_n$ is still a.u.\ on $[0,1]$. Here $\triangle$ is the symmetric difference between sets.
\end{lemma}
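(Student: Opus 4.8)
The plan is to reduce the statement to the characterization of asymptotic uniformity in terms of equal distribution (Definition \ref{def:uniform gridding}) and then exploit the elementary fact that adding or removing $o(d_n)$ points from a grid perturbs every normalized sum $\frac1{d_n}\sum F(\xi)$ by only $o(1)$, because the test functions involved are bounded on $[0,1]$.

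First I would record the bookkeeping consequence $|d'_n-d_n|\le|\mathcal G_n\triangle\mathcal G'_n|=o(d_n)$, so that $d'_n\to\infty$ and $d_n/d'_n\to 1$; this lets me freely trade the normalization $1/d_n$ for $1/d'_n$ up to a factor tending to $1$. Next, recalling that $\mathcal G_n$ being a.u.\ on $[0,1]$ means (by Definition \ref{def:uniform gridding}) that $\{\mathcal G_n\}$ is equally distributed with the uniform grid $U_n=\{j/d_n\}_{j=1}^{d_n}$, and that the Riemann sums of $U_n$ converge to $\int_0^1 F$, I would restate asymptotic uniformity of $\mathcal G_n$ in the convenient form: for every continuous $F$ with bounded support, $\frac1{d_n}\sum_{\xi\in\mathcal G_n}F(\xi)\to\int_0^1 F(x)\,\mathrm dx$.

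The core step is the comparison. Fix a continuous $F$ with bounded support; since all grid points lie in the compact set $[0,1]$ we have $M\coloneqq\sup_{[0,1]}|F|<\infty$. Writing $\sum_{\xi\in\mathcal G'_n}F(\xi)=\sum_{\xi\in\mathcal G_n}F(\xi)-\sum_{\xi\in\mathcal G_n\setminus\mathcal G'_n}F(\xi)+\sum_{\xi\in\mathcal G'_n\setminus\mathcal G_n}F(\xi)$ (counting with multiplicity if the grids are read as multisets, in which case $\mathcal G_n\setminus\mathcal G'_n$ and its companion are read off the multiplicity differences), the last two sums are each bounded in absolute value by $M\,|\mathcal G_n\triangle\mathcal G'_n|=o(d_n)$. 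Dividing by $d'_n$ and using $d_n/d'_n\to 1$ gives $\frac1{d'_n}\sum_{\xi\in\mathcal G'_n}F(\xi)=\frac{d_n}{d'_n}\Bigl(\frac1{d_n}\sum_{\xi\in\mathcal G_n}F(\xi)\Bigr)+o(1)\to\int_0^1 F(x)\,\mathrm dx$, which is exactly asymptotic uniformity of $\mathcal G'_n$.

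I do not expect a genuine obstacle here: the only points deserving a little care are (i) ensuring the perturbation argument uses nothing beyond boundedness of $F$, which is automatic on $[0,1]$ and hence imposes no extra hypothesis on $F$, and (ii) the bookkeeping when the $\mathcal G_n$ are interpreted as multisets, where the symmetric difference must be counted with multiplicity — in either reading the hypothesis delivers precisely the $o(d_n)$ bound needed. Alternatively one can phrase the whole argument directly via Definition \ref{def:equal distr}, showing that $\{\mathcal G'_n\}$ is equally distributed with $\{\mathcal G_n\}$ (again using $d_n/d'_n\to1$ and the bounded-test-function estimate) and then invoking transitivity of equal distribution; this is the same computation packaged differently.
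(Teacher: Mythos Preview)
Your proof is correct, but it takes a genuinely different route from the paper's. The paper works directly with the pointwise characterization $m_n\coloneqq\max_i|\xi_{i,n}-i/d_n|\to 0$ (which it invokes as ``by definition'' at the start of its proof) and does careful index bookkeeping: it sorts both grids, tracks how far the index of each common point can drift, then handles the indices $j$ not coming from $\mathcal G_n\cap\mathcal G'_n$ by sandwiching $\xi'_{j,n}$ between two nearby common points. The output is an explicit bound of the form $\max_j|\xi'_{j,n}-j/d'_n|\le 3m_n+3/d_n+e_n+3e_n(1+r_n)$ with $e_n=c_n/d_n$.

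You instead use the equal-distribution characterization from Definition~\ref{def:uniform gridding}: a.u.\ on $[0,1]$ is equivalent to $\frac{1}{d_n}\sum_{\xi\in\mathcal G_n}F(\xi)\to\int_0^1 F$ for all $F\in C_c(\mathbb R)$, and then a two-line perturbation estimate on normalized sums does the job. Your argument is considerably shorter and more conceptual, and it avoids the index gymnastics entirely. The paper's approach buys an explicit quantitative rate in terms of $m_n$ and $|\mathcal G_n\triangle\mathcal G'_n|/d_n$, which your argument does not give directly; but this rate is not used downstream, so nothing is lost for the applications in Theorems~\ref{th:Carl_generalized} and~\ref{th:main2}. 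Since the paper itself identifies the two characterizations via~\eqref{asym unif gridding}, your conclusion in the equal-distribution sense is equivalent to what is claimed.
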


\begin{proof}
Recall that by definition $\mathcal G_n=\{\xi_{i,n}\}_{i=1,\dots,d_n}$ is an a.u.\ grid on $[0,1]$ when
$ m_n\coloneqq
\max_{i=1,\dots,d_n} \left| \xi_{i,n} - i/d_n  \right| \to 0$. Suppose now that $\xi_{i,n}$ and $\xi'_{i,n}$ are sorted in increasing order. Moreover, let $\xi'_{j_i,n} = \xi_{i,n}$ for every $\xi_{i,n} \in  \mathcal G_n\cap \mathcal G'_n$, where also the indices $j_i$ are sorted in increasing order. Call now $c_n \coloneqq |\mathcal G_n \triangle \mathcal G'_n|= o(d_n)$, that can be seen as the number of elements removed from $\mathcal G_n$  plus those added to it,  in order to obtain $\mathcal G'_n$. Under this optic, it is easy to see that  $|d_n-d'_n| \le c_n$, but also $|i-j_i| \le c_n$ for every $\xi_{i,n} \in  \mathcal G_n\cap \mathcal G'_n$. As a consequence,
\[
\left| \frac{d'_n}{d_n}  -1  \right| \le \frac{c_n}{d_n} \coloneqq e_n\to 0, \qquad
\left| \frac{d_n}{d'_n}  -1  \right|
=
\left| \frac{1}{\frac{d_n}{d_n-d'_n}-1}    \right|
\le
\frac{1}{\frac{1}{e_n}+1}\eqqcolon r_n\to 0,
\]
and thus
\begin{equation}\label{eq:au_grid}
    \left| \xi'_{j_i,n} - \frac{j_i}{d'_n}  \right|
    \le
    \left| \xi_{i,n} - \frac{i}{d_n}  \right| +
    \left| \frac{i}{d_n} - \frac{i}{d'_n}  \right| +
    \left| \frac{i}{d'_n} - \frac{j_i}{d'_n}  \right|
    \le
    \left| \xi_{i,n} - \frac{i}{d_n}  \right| +
    2 \frac{c_n}{d_n} \frac {d_n}{d'_n}
    \le
    m_n + 2 e_n \left( 1 +  r_n \right).
\end{equation}
If now $j\ne j_i$ for any $i$, then let $\ol i, \underline{i}$ be the indices such that $\xi_{\ol i,n} = \xi'_{j_{\ol i},n}$ and $\xi_{\underline i,n} = \xi'_{j_{\underline i},n}$ are the closest possible to $\xi'_j$ with $j_{\underline i}\le j\le j_{\ol i}$ and the convention that $\xi'_{j_{\underline i},n}= 0, j_{\underline i}=0, \underline{i} = 0$ and
$\xi'_{j_{\ol i},n}=1, j_{\ol i}=d'_n+1, \ol i= d_n+1$ if they do not exist. In this case, surely $|j -j_{\ol i}| \le c_n$ and $|j -j_{\underline i}| \le c_n$ because otherwise there would be a  $\xi'_{j_{i^*},n}$ closer to  $\xi'_j$ than $\xi'_{j_{\ol i},n}$ or $\xi'_{j_{\underline i},n}$. Moreover, we have that $|\ol i - \underline i| \le c_n+1$, so thanks to \eqref{eq:au_grid} we can  write
\begin{align*}
\left|\xi'_{j_{\underline i},n} - \xi'_{j_{\ol i},n} \right| &\le
\left|\xi_{\underline i,n} - \frac{\underline i}{d_n} \right|
+
\left|\frac{\underline i}{d_n} - \frac{\ol i}{d_n}\right|
+
\left|\xi_{\ol i,n} - \frac{\ol i}{d_n}\right|
\le
2m_n+ \frac 3{d_n}+ e_n,
\\
    \left| \xi'_{j,n} - \frac{j}{d'_n}  \right|
    &\le
    \left|\xi'_{j_{\underline i},n} - \xi'_{j_{\ol i},n} \right|+
    \left| \xi'_{j_{\ol i},n} -  \frac{j_{\ol i}}{d'_n}  \right| +
    \left|\frac{j_{\ol i}}{d'_n} - \frac{j}{d'_n}  \right|
    \le
    3m_n+ \frac 3{d_n}+ e_n
    +  3 e_n \left( 1 +  r_n \right),
\end{align*}
thus proving that concluding that $\mathcal G'_n$ is an a.u.\ grid, since
\[
\max_{j=1,\dots,d'_n}   \left| \xi'_{j,n} - \frac{j}{d'_n}  \right|
\le 3m_n+ \frac 3{d_n}+ e_n
+  3 e_n \left( 1 +  r_n \right) \to 0.
\]

\qed
\end{proof}

\begin{lemma}\label{lem:union_of_au_grids_au}
	Suppose $\mathcal G^j_n=\{\xi^j_{i,n}\}_{i=1,\dots,d^j_n}$ are two a.u.\ grids on $[0,1]$ with $d^j_n\to \infty$ for $j=1,2$. If $d^j_n/n\to 1/2$ for $j=1,2$, then $\mathcal G^1_n\cup \mathcal G^2_n$ is still a.u.\ on $[0,1]$.
\end{lemma}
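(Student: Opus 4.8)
The plan is to check the definition of asymptotic uniformity directly, using the characterization of an a.u.\ grid as one that is equally distributed with the uniform grid of the same cardinality (Definition~\ref{def:uniform gridding}). Set $\mathcal G_n \coloneqq \mathcal G^1_n\cup\mathcal G^2_n$, viewed as a grid of cardinality $D_n \coloneqq d^1_n+d^2_n$; note that $D_n\to\infty$ and, by the hypothesis $d^j_n/n\to 1/2$, also $d^1_n/D_n\to 1/2$ and $d^2_n/D_n\to 1/2$. Fix a continuous function $F$ with bounded support.

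First I would deduce from the a.u.\ property of each $\mathcal G^j_n$ that
\[
\frac{1}{d^j_n}\sum_{i=1}^{d^j_n}F\bigl(\xi^j_{i,n}\bigr)\;\longrightarrow\;\int_0^1 F(x)\,\mathrm{d}x,\qquad j=1,2,
\]
which holds because $\mathcal G^j_n$ is equally distributed with the uniform grid $\{i/d^j_n\}_{i=1}^{d^j_n}$, whose own averages $\frac{1}{d^j_n}\sum_{i=1}^{d^j_n}F(i/d^j_n)$ are Riemann sums converging to $\int_0^1 F$. Then I would split the average over the union into its two pieces,
\[
\frac{1}{D_n}\sum_{x\in\mathcal G_n}F(x)
= \frac{d^1_n}{D_n}\cdot\frac{1}{d^1_n}\sum_{i=1}^{d^1_n}F\bigl(\xi^1_{i,n}\bigr)
+ \frac{d^2_n}{D_n}\cdot\frac{1}{d^2_n}\sum_{i=1}^{d^2_n}F\bigl(\xi^2_{i,n}\bigr),
\]
and note that the weights $d^j_n/D_n$ are nonnegative, sum to $1$ and converge to $1/2$, while each inner average converges to $\int_0^1 F$ by the previous step; hence the left-hand side converges to $\int_0^1 F$. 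Since the uniform grid $\{i/D_n\}_{i=1}^{D_n}$ attached to $\mathcal G_n$ also yields a Riemann sum $\frac{1}{D_n}\sum_{i=1}^{D_n}F(i/D_n)\to\int_0^1 F$, the difference of the two averages --- which is precisely the quantity appearing in \eqref{eq distribution} --- tends to $0$ for every continuous $F$ with bounded support. This is exactly the assertion that $\mathcal G_n$ is equally distributed with its uniform grid, i.e.\ that $\mathcal G_n$ is a.u.\ on $[0,1]$.

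I do not expect a serious obstacle here: the argument is essentially bookkeeping, and the hypotheses are used only to guarantee $D_n\to\infty$ (from $d^j_n\to\infty$) and that the cardinality weights $d^j_n/D_n$ are controlled (from $d^j_n/n\to1/2$); the one point requiring attention is to keep the three cardinalities $d^1_n$, $d^2_n$, $D_n$ carefully distinct and not conflate ``grid of size $d^j_n$'' with ``grid of size $D_n$''. An alternative route, closer to Lemma~\ref{lem:au_grids_up_to_o(n)} and to the sorted characterization~\eqref{asym unif gridding}, is to argue with counting functions: if $N_j(t)$ counts the points of $\mathcal G^j_n$ lying in $[0,t]$, the a.u.\ property of $\mathcal G^j_n$ is equivalent to $\sup_{t\in[0,1]}|N_j(t)/d^j_n - t|\to0$, so $N_1(t)+N_2(t)=D_n\,t+o(D_n)$ uniformly in $t$, and ``inverting'' this counting function for the sorted union $\zeta_{1,n}\le\cdots\le\zeta_{D_n,n}$ gives $\max_i|\zeta_{i,n}-i/D_n|\to0$; this needs a little extra care at coincident grid points but is otherwise immediate.
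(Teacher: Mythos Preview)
Your proof is correct and takes a genuinely different route from the paper. You work directly with Definition~\ref{def:uniform gridding} (equal distribution with the uniform grid) and reduce the question to a weak-convergence statement: each subgrid's empirical average of $F$ tends to $\int_0^1 F$ by Riemann sums, the convex combination with weights $d^j_n/D_n\to 1/2$ therefore also tends to $\int_0^1 F$, and so does the average over the uniform grid of size $D_n$. The paper instead works with the sorted characterization~\eqref{asym unif gridding} (which it treats as the working definition in the proofs of Lemmas~\ref{lem:au_grids_up_to_o(n)} and~\ref{lem:union_of_au_grids_au}): it sorts the union, fixes an arbitrary $\xi_{i,n}$, locates it between consecutive points of the other grid, and bounds $|\xi_{i,n}-i/n|$ by explicit index arithmetic and a chain of triangle inequalities, obtaining a quantitative bound of the form $3c_n + 2/n + 1/(2d^j_n)$.

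Your approach is shorter, more conceptual, and generalizes immediately to unions of $k$ grids with arbitrary limiting weight proportions summing to $1$; the paper's approach, by contrast, yields an explicit rate. The alternative you sketch at the end---via counting functions $N_j(t)$ and the equivalence of a.u.\ with $\sup_t|N_j(t)/d^j_n - t|\to 0$---is essentially the paper's argument recast in cleaner language. One small point worth making explicit: you are treating $\mathcal G_n$ as a multiset of size $D_n=d^1_n+d^2_n$; in the paper's applications the two grids come from a partition (Theorem~\ref{th:Carl_generalized}) so $d^1_n+d^2_n=n$ and the issue does not arise, but your formulation is slightly more general.
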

\begin{proof}
For this, let $\mathcal G^j_n \coloneqq \{ \xi^j_{i,n}  \}_{i=1,\dots,d^j_n}$  and
$\mathcal G_n \coloneqq \{ \xi_{i,n}  \}_{i=1,\dots,n}$, where all elements are sorted in increasing order, and call $\xi^j_{i,n} = \xi_{a_j(i), n}$. Let
\[
c_n \coloneqq \max_{i=1,\dots,d_n^1}
\left| \xi^1_{i,n} - \frac {i }{d_n^1}\right|
+
\max_{i=1,\dots,d_n^2}
\left| \xi^2_{i,n} - \frac {i }{d_n^2}\right|
\to 0
\]
and notice that $|2d_n^j/n - 1| \le 2/n$.
Fix an index $i$ and suppose $i = a_1(p)$, meaning  $\xi_{i,n} = \xi^1_{p,n}\in \mathcal G^1_n$, and let $j<i$ be the biggest index such that $j=a_2(q)$ (or $j=0$ and $q=0$, $\xi_{0,n}=0$ if there is none). As a consequence, $i = p+q$ and moreover $a_2(q+1)>i$ (where $a_2(d_n^2+1) = n+1$, and the respective point $\xi_{a_2(d_n^2+1),n}=1$), so that $\xi_{i,n}= \xi^1_{p,n}$  stands between  $\xi^2_{q,n}$ and $\xi^2_{q+1,n}$. As a consequence,
\begin{align*}
\left|
\frac{ q}{d_n^2} - \frac{ p}{d_n^1}\right|&\le
\left|
\frac{ q}{d_n^2} - \xi^2_{q,n}\right|
+
\left|
\xi^2_{q,n} - \xi^1_{p,n}\right|
+
\left|
\xi^1_{p,n} - \frac{ p}{d_n^1}\right|\le
2c_n
+
\left|
\xi^2_{q,n} - \xi^2_{q+1,n}\right|\\
&\le
2c_n
+
\left|
\xi^2_{q,n} - \frac{ q}{d_n^2}\right|
+
\left|
\frac{ q}{d_n^2} - \frac{ q+1}{d_n^2}\right|
+
\left|
\frac{ q+1}{d_n^2} - \xi^2_{q+1,n}\right|\\
&\le 4c_n +
\frac{1 }{d_n^2},
\\
\left|
\frac{ p+q}n - \frac{ p}{d_n^1}\right| &
\le
\frac{ p}{d_n^1}
\left|
\frac{d_n^1}n - \frac 12\right| +
\frac{ q}{d_n^2}
\left|
\frac{d_n^2}n - \frac{1}{2}\right| +
\frac 12
\left|
\frac{ q}{d_n^2} - \frac{ p}{d_n^1}\right|\\
&\le
\frac {2 } n
+
2c_n+
\frac{1 }{2d_n^2}, \\
\left| \xi_{i,n} - \frac {i }n\right| &\le
\left| \xi^1_{p,n} - \frac { p}{d_n^1}\right|
+
\left| \frac { p}{d_n^1} - \frac { p+q}n\right|
\le 3c_n + \frac {2 } n
+
\frac{1 }{2d_n^2} \to 0.
\end{align*}
The same bound with $d_n^1$ instead of $d_n^2$ applies in the case $\xi_{i,n} \in \mathcal G_n^2$, so this is enough to prove that $\mathcal G_n$ is an a.u. grid on $[0,1]$.

\qed
\end{proof}

\begin{remark}
	Lemma \ref{lem:union_of_au_grids_au} holds also without the hypothesis $d^j_n/n\to 1/2$.
\end{remark}

We now collect further useful results from the quoted literature.\\

\noindent\textbf{Monotone rearrangement} (see \cite{EM22} and references therein).
Let $f:\Omega\subset\mathbb R^d\to\mathbb R$ be measurable on a set $\Omega$ with $0<\mu_d(\Omega)<\infty$. The monotone rearrangement of $f$ is the function denoted by $f^\dag$ and defined as follows:
\begin{equation}\label{crv}
f^\dag:(0,1)\to\mathbb R,\qquad f^\dag(y)=\inf\biggl\{u\in\mathbb R:\frac{\mu_d\{f\le u\}}{\mu_d(\Omega)}\ge y\biggr\}.
\end{equation}
If $f$ is continuous and bounded, then $f^\dag$ ia also defined on $\{0,1\}$ as
\[
f^\dag(0) = \inf_{x\in \Omega} f(x), \qquad f^\dag(1) = \sup_{x\in \Omega} f(x).
\]

\begin{theorem}[Cantoni-Butler \cite{Cantoni1976}]\label{th:Cantoni-Butler}
For any real $f\in L^1[-\pi,\pi]$,
$$\lambda_i(H_n(f)) = (-1)^{i+1} \lambda_i(T_n(f)),$$
where the order of the eigenvalues is not specified.
\end{theorem}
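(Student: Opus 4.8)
\emph{Proof plan.} The plan is to exploit the centrosymmetric structure of $T_n(f)$ together with the eigendecomposition of the involution $Y_n$. Write $A\coloneqq T_n(f)$, which under the hypotheses is a real symmetric matrix; since $A$ is Toeplitz one has $Y_nAY_n=A^\top=A$, i.e.\ $A$ is centrosymmetric, and because $Y_n^2=I_n$ this is the same as $Y_nA=AY_n$. Next I would split $\mathbb{R}^n=V^+\oplus V^-$, orthogonally, where $V^\pm\coloneqq\ker(Y_n\mp I_n)$ are the subspaces of symmetric, respectively skew-symmetric, vectors; an elementary count gives $p\coloneqq\dim V^+=\lceil n/2\rceil$ and $q\coloneqq\dim V^-=\lfloor n/2\rfloor$ (for $n$ odd the single central coordinate forces the skew vectors to vanish there). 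Since $A$ commutes with $Y_n$ it leaves $V^+$ and $V^-$ invariant, so choosing an orthogonal $Q=[\,Q^+\mid Q^-\,]$ whose columns form orthonormal bases of $V^+$ and $V^-$ yields simultaneously $Q^\top Y_nQ=I_p\oplus(-I_q)$ and $Q^\top AQ=A^+\oplus A^-$, with $A^\pm$ the symmetric compressions of $A$ to $V^\pm$, of sizes $p$ and $q$.

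The decisive computation is then $Q^\top H_n(f)Q=(Q^\top Y_nQ)(Q^\top AQ)=A^+\oplus(-A^-)$. Reading off the spectra as multisets, $\operatorname{spec}(T_n(f))=\operatorname{spec}(A^+)\uplus\operatorname{spec}(A^-)$, whereas $\operatorname{spec}(H_n(f))=\operatorname{spec}(A^+)\uplus\bigl(-\operatorname{spec}(A^-)\bigr)$. To reach the stated identity, label the eigenvalues of $T_n(f)$, counted with multiplicity, so that the $p$ eigenvalues coming from $A^+$ occupy the odd slots $1,3,\dots,2p-1$ and the $q$ eigenvalues coming from $A^-$ occupy the even slots $2,4,\dots,2q$; this is consistent precisely because $\{1,\dots,n\}$ contains exactly $\lceil n/2\rceil=p$ odd and $\lfloor n/2\rfloor=q$ even indices. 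With this indexing, $\{(-1)^{i+1}\lambda_i(T_n(f)):i=1,\dots,n\}$ equals $\operatorname{spec}(A^+)\uplus\bigl(-\operatorname{spec}(A^-)\bigr)=\operatorname{spec}(H_n(f))$, which is the assertion — the clause ``the order of the eigenvalues is not specified'' being exactly what licenses the choice of this labeling.

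I do not anticipate a genuine obstacle; the two points deserving care are (i) the dimension count $\dim V^\pm=\lceil n/2\rceil,\lfloor n/2\rfloor$, which is what guarantees that it is the $q=\lfloor n/2\rfloor$ ``skew'' eigenvalues that receive the sign flip and makes the final parity bookkeeping work, and (ii) phrasing the conclusion correctly as an equality of eigenvalue multisets under a compatible relabelling rather than under the monotone orderings (already at $n=3$, e.g.\ for $f(\theta)=2-2\cos\theta$, the monotonically sorted spectra of $T_n(f)$ and $H_n(f)$ do not satisfy the relation termwise). An equivalent route, closer to the original argument of Cantoni and Butler, replaces the invariant-subspace formulation by the explicit orthogonal similarity through a matrix of the form $\tfrac1{\sqrt2}\bigl[\begin{smallmatrix}I&Y\\ -Y&I\end{smallmatrix}\bigr]$ that block-diagonalizes any real symmetric centrosymmetric matrix, treating even and odd $n$ separately; the formulation above simply handles both parities at once.
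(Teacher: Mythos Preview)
Your argument is correct. Note, however, that in the paper Theorem~\ref{th:Cantoni-Butler} is quoted from \cite{Cantoni1976} without proof; the paper's own reasoning appears instead in Theorem~\ref{cbt}, whose proof is exactly the mechanism you use: $T_n(f)$ is real symmetric centrosymmetric, hence admits an orthonormal eigenbasis of symmetric and skew-symmetric vectors (with $\lceil n/2\rceil$ of the former and $\lfloor n/2\rfloor$ of the latter), and applying $Y_n$ to such an eigenvector flips the sign on the skew ones, giving $H_n\vv_i=(-1)^{i+1}\lambda_i(T_n)\vv_i$. Your block-diagonalization $Q^\top H_n(f)Q=A^+\oplus(-A^-)$ is the same computation written in matrix form, and your parity bookkeeping makes explicit what the paper leaves implicit in the phrase ``the order of the eigenvalues is not specified''. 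So: same approach, with your write-up being self-contained where the paper defers to the citation.
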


\noindent\textbf{Regular sets.}
We say that $\Omega\subset\mathbb R^d$ is a regular set if it is bounded and $\mu_d(\partial\Omega)=0$. \\

If $\aa,\bb\in\mathbb R^d$ with $\aa\le\bb$, then we denote by $(\aa,\bb]$ the $d$-dimensional rectangle $(a_1,b_1]\times\cdots\times(a_d,b_d]$. Similar meanings have the notations for the open $d$-dimensional rectangle $(\aa,\bb)$ and the closed $d$-dimensional rectangle $[\aa,\bb]$.
Let $[\aa,\bb]$ be a $d$-dimensional rectangle, let $\nn=(n_1,\ldots,n_d)\in\mathbb N^d$, and let $\mathcal G_\nn=\{\xx_{\ii,\nn}\}_{\ii=\bu,\ldots,\nn}$ be a sequence of $d_\nn=n_1n_2\cdots n_d$ grid points in $\mathbb R^d$. We say that the grid $\mathcal G_\nn$ is a.u.\ in $[\aa,\bb]$ if
\[ \lim_{\nn\to\infty}\biggl(\max_{\ii=\bu,\ldots,\nn}\left\|\xx_{\ii,\nn}-\Bigl(\aa+\ii\,\frac{\bb-\aa}\nn\Bigr)\right\|_\infty\biggr)=0, \]
where $\|\xx\|_\infty=\max(|x_1|,\ldots,|x_d|)$ for every $\xx\in\mathbb R^d$.
Notice that the former is a generalization of the relation in (\ref{asym unif gridding}) for $d=1$ and is in line with Definition \ref{def:uniform gridding}.

\begin{theorem}[Theorem 3.1, \cite{EM22}]\label{th:rearr_main}
Let $f:\Omega\subset\mathbb R^d\to\mathbb R$ be continuous a.e.\ on the regular set $\Omega$ with $\mu_d(\Omega)>0$. 
Take any $d$-dimensional rectangle $[\aa,\bb]$ containing $\Omega$ and any a.u.\ grid $\mathcal G_\nn=\{\xx_{\ii,\nn}\}_{\ii=\bu,\ldots,\nn}$ in $[\aa,\bb]$.
For each $\nn\in\mathbb N^d$, consider the samples
\begin{equation*}
f(\xx_{\ii,\nn}),\qquad\ii\in\II_\nn(\Omega)=\{\ii\in\{\bu,\ldots,\nn\}:\xx_{\ii,\nn}\in\Omega\},
\end{equation*}
sort them in non-decreasing order, and put them into a vector $(s_0,\ldots,s_{\omega(\nn)})$, where $\omega(\nn)=\#\II_\nn(\Omega)-1$.
Let $f^\dag_\nn:[0,1]\to\mathbb R$ be the linear spline function that interpolates the samples $(s_0,\ldots,s_{\omega(\nn)})$ over the equally spaced nodes $(0,\frac{1}{\omega(\nn)},\frac{2}{\omega(\nn)},\ldots,1)$ in $[0,1]$. Then,
\[ \lim_{\nn\to\infty} f^\dag_\nn(y)=f^\dag(y) \]
for every continuity point $y$ of $f^\dag$. In particular, $f^\dag_\nn\to f^\dag$ a.e.\ in $(0,1)$.
\end{theorem}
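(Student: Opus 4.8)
The plan is to read $f^\dag_\nn$ as (essentially) the empirical quantile function of the sample set $\{f(\xx_{\ii,\nn}):\ii\in\II_\nn(\Omega)\}$ and to establish convergence in three moves: weak convergence of the empirical measures of these samples to the push-forward of normalized Lebesgue measure on $\Omega$ under $f$; convergence of the associated distribution functions; and convergence of their generalized inverses at continuity points of the limit, transferred finally to the linear spline $f^\dag_\nn$.

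First I would fix notation: set $N_\nn\coloneqq\#\II_\nn(\Omega)=\omega(\nn)+1$, let $\nu_\nn\coloneqq\frac1{N_\nn}\sum_{\ii\in\II_\nn(\Omega)}\delta_{f(\xx_{\ii,\nn})}$ be the empirical measure, and let $\nu$ be the image of $\mu_d|_\Omega/\mu_d(\Omega)$ under $f$, so that $\nu((-\infty,u])=\mu_d\{x\in\Omega:f(x)\le u\}/\mu_d(\Omega)\eqqcolon\phi(u)$. The analytic core is to show $\nu_\nn\to\nu$ weakly. For any bounded continuous $F:\mathbb R\to\mathbb R$, put $g\coloneqq(F\circ f)\,\chi_\Omega$ on the rectangle $[\aa,\bb]$; $g$ is bounded and its discontinuity set is contained in $\partial\Omega$ together with the discontinuity set of $f$ inside $\Omega$, a set of Lebesgue measure zero, so by Lebesgue's criterion $g$ is Riemann integrable on $[\aa,\bb]$ (the case $F\equiv1$ gives that $\chi_\Omega$ itself is Riemann integrable, i.e.\ $\Omega$ is Jordan measurable, which is exactly what "regular set" buys us since $\partial\Omega$ is compact of measure zero). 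Because $\mathcal G_\nn$ is a.u.\ in $[\aa,\bb]$, its points lie within $o(1)$, uniformly in $\ii$, of the uniform grid $\aa+\ii(\bb-\aa)/\nn$; a short sandwiching of $g$ between continuous functions then shows that averaging sums over $\mathcal G_\nn$ converge to the mean of the integral for every Riemann integrable integrand. Applying this to $g$ and to $\chi_\Omega$ and taking the ratio yields
\[
\frac1{N_\nn}\sum_{\ii\in\II_\nn(\Omega)}F\bigl(f(\xx_{\ii,\nn})\bigr)
=\frac{\frac1{d_\nn}\sum_{\ii}g(\xx_{\ii,\nn})}{\frac1{d_\nn}\sum_{\ii}\chi_\Omega(\xx_{\ii,\nn})}
\xrightarrow{\nn\to\infty}\frac{\int_{[\aa,\bb]}g}{\int_{[\aa,\bb]}\chi_\Omega}
=\frac1{\mu_d(\Omega)}\int_\Omega F(f(x))\,\mathrm d\mu_d
=\int_{\mathbb R}F\,\mathrm d\nu ,
\]
which is $\nu_\nn\to\nu$ weakly; in particular $N_\nn/d_\nn\to\mu_d(\Omega)/\mu_d([\aa,\bb])>0$, so $N_\nn\to\infty$ and $\omega(\nn)\to\infty$.

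The second and third moves are then standard. Writing $\phi_\nn(u)\coloneqq\nu_\nn((-\infty,u])$, weak convergence gives $\phi_\nn(u)\to\phi(u)$ at every continuity point $u$ of $\phi$, hence on a co-countable (dense) set, since $\phi$ is non-decreasing. By \eqref{crv}, $f^\dag$ is the left-continuous generalized inverse of $\phi$, and the sorted sample vector $(s_0,\dots,s_{\omega(\nn)})$ is, up to the usual indexing convention, the generalized inverse of $\phi_\nn$ sampled at the levels $(j+1)/N_\nn$. The classical quantile-convergence companion of weak convergence then gives $\phi_\nn^{-1}(y)\to\phi^{-1}(y)=f^\dag(y)$ at every continuity point $y$ of $f^\dag$. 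Finally, $f^\dag_\nn$ differs from $\phi_\nn^{-1}$ only by replacing the step levels $1/N_\nn$ by $1/\omega(\nn)$ — an $O(1/N_\nn)$ reparametrization of the argument — and by using linear instead of piecewise-constant interpolation between consecutive $s_j$; both functions are non-decreasing and share the values $s_j$. Hence, for a continuity point $y$ of $f^\dag$ and $\epsilon>0$, choosing continuity points $y_-<y<y_+$ of $f^\dag$ with $f^\dag(y_\pm)$ within $\epsilon$ of $f^\dag(y)$, one gets $\phi_\nn^{-1}(y_-)\le f^\dag_\nn(y)\le\phi_\nn^{-1}(y_+)$ for all large $\nn$ by monotonicity, so $f^\dag(y_-)\le\liminf_\nn f^\dag_\nn(y)\le\limsup_\nn f^\dag_\nn(y)\le f^\dag(y_+)$; letting $\epsilon\to0$ proves $f^\dag_\nn(y)\to f^\dag(y)$. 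The "in particular" clause follows because the monotone function $f^\dag$ has at most countably many discontinuities.

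I expect the genuine obstacle to be the first move: justifying the convergence of averaging sums over a merely asymptotically uniform grid when the integrand $f$ is only continuous almost everywhere and the domain $\Omega$ is not a box. This is where one must combine Lebesgue's integrability criterion, the Jordan measurability packaged into the notion of a regular set (to control grid points straddling $\partial\Omega$), and the elementary — but not entirely mechanical — fact that a.u.\ grids integrate Riemann integrable functions correctly. Once $\nu_\nn\to\nu$ weakly is in hand, the remaining steps (weak convergence $\Rightarrow$ convergence of distribution functions $\Rightarrow$ convergence of quantile functions, and the passage from $\phi_\nn^{-1}$ to the linear spline) are routine.
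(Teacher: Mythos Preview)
The paper does not prove this statement: it is quoted verbatim from \cite{EM22} (as ``Theorem 3.1'') in the auxiliary-results subsection and used as a black box, with no accompanying argument. So there is nothing in the present paper to compare your proposal against.

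That said, your outline is a sound and essentially complete proof of the cited result. The three-move structure --- weak convergence of empirical measures via Riemann integrability of $(F\circ f)\chi_\Omega$ on the enclosing rectangle, the portmanteau-to-quantile passage, and the final monotone squeeze to upgrade from the step quantile $\phi_\nn^{-1}$ to the linear spline $f^\dag_\nn$ --- is the natural route. Your identification of the only nontrivial step is accurate: showing that averaging over an a.u.\ grid reproduces the Riemann integral for bounded a.e.-continuous integrands is where the regularity of $\Omega$ (Jordan measurability) and Lebesgue's criterion are really used, and you invoke both correctly. The remaining moves are standard one-dimensional monotone-function arguments.
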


\begin{lemma}[Lemma 3.3, \cite{EM22}]\label{lem:rearr_main}
Let $\omega_n$ be a sequence of positive integers such that $\omega_n\to\infty$ and let $g_n:[0,1]\to\mathbb R$ be a sequence of non-decreasing functions such that
\[ \lim_{n\to\infty}\frac1{\omega_n}\sum_{\ell=0}^{\omega_n}F\Bigl(g_n\Bigl(\frac\ell{\omega_n}\Bigr)\Bigr)=\int_0^1F(g(y)){\rm d}y,\qquad\forall\,F\in C_c(\mathbb R), \]
where $g:(0,1)\to\mathbb R$ is non-decreasing. Then, $g_n(y)\to g(y)$ for every continuity point $y$ of $g$. 
\end{lemma}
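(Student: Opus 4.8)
The plan is to read the hypothesis as vague convergence of probability measures and then exploit the monotonicity of $g_n$. Let $\nu_n\coloneqq\frac{1}{\omega_n+1}\sum_{\ell=0}^{\omega_n}\delta_{g_n(\ell/\omega_n)}$, a probability measure on $\mathbb R$ (compactly supported, since $g_n$ attains finitely many values on the grid), and let $\mu$ be the push-forward of the Lebesgue measure on $(0,1)$ by $g$, which is a probability measure since $g$ is real-valued on $(0,1)$. For $F\in C_c(\mathbb R)$ one has $\int F\,\mathrm d\nu_n=\frac{\omega_n}{\omega_n+1}\cdot\frac1{\omega_n}\sum_{\ell=0}^{\omega_n}F(g_n(\ell/\omega_n))$ and $\int F\,\mathrm d\mu=\int_0^1F(g(y))\,\mathrm dy$, so the hypothesis says exactly that $\nu_n\to\mu$ vaguely; since $\nu_n$ and $\mu$ are probability measures, this upgrades to weak convergence. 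Writing $G_n$ and $G$ for the distribution functions of $\nu_n$ and $\mu$, the portmanteau inequalities $G(t-)\le\liminf_n G_n(t)$ and $\limsup_n G_n(t)\le G(t)$ then hold for every $t\in\mathbb R$, where $G(t)=|\{y\in(0,1):g(y)\le t\}|$ and $G(t-)=|\{y\in(0,1):g(y)<t\}|$ (Lebesgue measure); moreover, since each $g_n$ is non-decreasing, the set $\{\ell:g_n(\ell/\omega_n)\le t\}$ is an initial segment of $\{0,\dots,\omega_n\}$ and $G_n(t)=\frac{1}{\omega_n+1}\#\{\ell:g_n(\ell/\omega_n)\le t\}$.

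Now fix a continuity point $y_0\in(0,1)$ of $g$, set $a\coloneqq g(y_0)$, and prove $\limsup_n g_n(y_0)\le a$ and $\liminf_n g_n(y_0)\ge a$ separately, by contradiction. For the upper bound, suppose $g_n(y_0)\ge a+3\varepsilon$ along a subsequence, for some $\varepsilon>0$. By monotonicity of $g_n$ we get $g_n(\ell/\omega_n)\ge a+3\varepsilon>a+2\varepsilon$ whenever $\ell/\omega_n\ge y_0$, hence $\#\{\ell:g_n(\ell/\omega_n)\le a+2\varepsilon\}\le\#\{\ell:\ell/\omega_n<y_0\}\le\omega_n y_0+1$, i.e.\ $G_n(a+2\varepsilon)\le(\omega_n y_0+1)/(\omega_n+1)\to y_0$. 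Choose $t^*\in(a,a+2\varepsilon)$; by monotonicity of $G_n$ in $t$ and the lower portmanteau inequality at $t^*$ (applied along the subsequence) one obtains $G(t^*-)\le y_0$. On the other hand, continuity of $g$ at $y_0$ (so $g(y_0^+)=g(y_0)=a<t^*$) yields $g(y)<t^*$ for every $y\in(0,y_0+\eta)$ for some $\eta>0$ with $y_0+\eta\le1$, so $G(t^*-)\ge y_0+\eta>y_0$, a contradiction. The lower bound is symmetric: if $g_n(y_0)\le a-3\varepsilon$ along a subsequence, monotonicity forces $\#\{\ell:g_n(\ell/\omega_n)\le a-2\varepsilon\}\ge\#\{\ell:\ell/\omega_n\le y_0\}\ge\omega_n y_0$, so $G_n(a-2\varepsilon)\ge\omega_n y_0/(\omega_n+1)\to y_0$; taking $t^*\in(a-2\varepsilon,a)$ and using the upper portmanteau inequality gives $G(t^*)\ge y_0$, while continuity of $g$ at $y_0$ now forces $\{y\in(0,1):g(y)\le t^*\}\subseteq(0,y_0-\eta)$ and hence $G(t^*)\le y_0-\eta<y_0$, again a contradiction. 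Therefore $g_n(y_0)\to a=g(y_0)$.

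The routine ingredients are the identification of the hypothesis with vague convergence, the portmanteau inequalities, and the counting identities $\#\{\ell\in\{0,\dots,\omega_n\}:\ell/\omega_n\le y_0\}=\lfloor\omega_n y_0\rfloor+1$. The step that needs real care is the squeeze of the second paragraph: one has to use the monotonicity of each individual $g_n$ to turn the (assumed, false) strict inequality at the single point $y_0$ into the same strict inequality at the whole one-sided family of grid points lying on one side of $y_0$, and then choose the auxiliary threshold $t^*$ strictly between $a$ and $a\pm2\varepsilon$ so that the continuity of $g$ at $y_0$ produces a fixed positive gap $\eta$ between $y_0$ and $G(t^*)$ (respectively $G(t^*-)$), which contradicts the estimate coming from the distributional hypothesis. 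The only other thing I would check is that placing $y_0$ near an endpoint of $(0,1)$ causes no trouble, which it does not, since $g$ is defined on the open interval.
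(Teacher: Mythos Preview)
The paper does not supply its own proof of this lemma; it is quoted verbatim from \cite{EM22} as an auxiliary tool, so there is no in-paper argument to compare against.

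Your proof is correct. Recasting the hypothesis as vague (hence weak, since all measures involved are probabilities) convergence of the empirical measures $\nu_n$ to the push-forward $\mu$, and then pulling the contradiction out of the portmanteau bounds on the distribution functions, is a clean and standard route. The two ingredients that do the real work---monotonicity of each $g_n$ (to convert a pointwise failure at $y_0$ into a one-sided block of grid points) and continuity of $g$ at $y_0$ (to manufacture the gap $\eta$)---are used exactly where they are needed. The bookkeeping with the normalization $1/(\omega_n+1)$ versus $1/\omega_n$ is handled correctly, and restricting the portmanteau inequalities to the offending subsequence is legitimate because any subsequence of a weakly convergent sequence converges weakly to the same limit.
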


\begin{theorem}[Theorem 2, \cite{barbarinoDM}]\label{th:GLT_up_to_permutation}
Given a matrix sequence of diagonal matrices $\{D_n\}_n\sim_\lambda f(x)$ where $f:[0,1]\to \f C$ is a measurable function, then
\[
\{P_nD_nP_n^T\}_n \GLT f(x)
\]
for some $P_n$ permutation matrices.
\end{theorem}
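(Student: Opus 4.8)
The plan is to realize the required permutations by \emph{transporting} the diagonal entries of $D_n$ onto the diagonal of a fixed reference sequence that is already known to be GLT with symbol $f$, and then to invoke the stability of the GLT class under small-norm-plus-small-rank perturbations. Throughout, write $d_n$ for the size of $D_n$ and note that, $D_n$ being diagonal, its diagonal entries are exactly its eigenvalues; let $\mu_f$ be the push-forward of the Lebesgue measure of $[0,1]$ under $f$, so that the hypothesis $\{D_n\}_n\sim_\lambda f$ says precisely that the empirical measures of the diagonal of $D_n$ converge weakly to $\mu_f$. I would first fix a reference sequence: by standard GLT theory (\cite{GLT-book-I,GLT-book-II}) every measurable $f\colon[0,1]\to\mathbb C$ is the symbol of a GLT sequence, the construction can be carried out with diagonal matrices, and a GLT sequence of normal matrices is distributed in the eigenvalue sense as its symbol; hence one may choose diagonal $\hat D_n$ of size $d_n$ with $\{\hat D_n\}_n\GLT f$ and $\{\hat D_n\}_n\sim_\lambda f$. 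Consequently the diagonals of $D_n$ and of $\hat D_n$ have the \emph{same} limiting empirical measure $\mu_f$, so for every axis-parallel rectangle $C\subset\mathbb C$ with $\mu_f(\partial C)=0$ we get $\frac1{d_n}\bigl(\#\{i:(D_n)_{ii}\in C\}-\#\{i:(\hat D_n)_{ii}\in C\}\bigr)\to 0$; all but countably many rectangles are admissible in this sense, since the level sets $\{\mathrm{Re}\,f=a\}$, $\{\mathrm{Im}\,f=b\}$, $\{|f|=R\}$ are pairwise disjoint and of total measure at most $1$.

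For the transport step, fix $\varepsilon>0$, choose an admissible $R$ with $\mu_f(\{|z|>R\})<\varepsilon$, and cover $\{|z|\le R\}$ by finitely many admissible rectangles $C_1,\dots,C_L$ of diameter $<\varepsilon$; put $C_\infty=\{|z|>R\}$. With $N_\ell^{(n)},\hat N_\ell^{(n)}$ the corresponding diagonal-entry counts of $D_n,\hat D_n$, the observation above gives $\sum_{\ell=1}^L|N_\ell^{(n)}-\hat N_\ell^{(n)}|=o(d_n)$, while $N_\infty^{(n)}\le d_n(\varepsilon+o(1))$. I would then take a permutation $P_n^{(\varepsilon)}$ that, for each $\ell\le L$, maps $\min(N_\ell^{(n)},\hat N_\ell^{(n)})$ of the diagonal positions where $\hat D_n$ lies in $C_\ell$ onto the same number of diagonal entries of $D_n$ lying in $C_\ell$, matching the remaining $o(d_n)+d_n(\varepsilon+o(1))$ positions arbitrarily. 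Splitting $P_n^{(\varepsilon)}D_n(P_n^{(\varepsilon)})^{T}-\hat D_n=N_n^{(\varepsilon)}+R_n^{(\varepsilon)}$ into the contributions of the successfully matched and of the leftover positions yields $\|N_n^{(\varepsilon)}\|\le\varepsilon$ and $\rk R_n^{(\varepsilon)}\le d_n(2\varepsilon+o(1))$.

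Running this for $\varepsilon=1/m$ produces, for each $m$, permutations $P_n^{(m)}$ with $\|N_n^{(m)}\|\le 1/m$ and $\rk R_n^{(m)}\le c(m)d_n$ for all $n$ large enough, where $c(m)\to0$. A standard diagonal extraction (the a.c.s.\ sub-extraction lemma, \cite{GLT-book-I}) then gives a single slowly increasing $m(n)\to\infty$, and setting $P_n:=P_n^{(m(n))}$ we obtain $P_nD_nP_n^{T}-\hat D_n=N_n+R_n$ with $\|N_n\|\to0$ and $\rk R_n=o(d_n)$. Since the GLT class is closed under perturbations of vanishing spectral norm plus vanishing rank fraction and $\{\hat D_n\}_n\GLT f$, it follows that $\{P_nD_nP_n^{T}\}_n\GLT f$, which is the assertion.

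I expect the main obstacle to be the coordination of the two limits in the transport step: one must guarantee both that the cell-count mismatches are genuinely $o(d_n)$ — which is exactly what forces the admissible/null-boundary choice of rectangles — and that the unbounded-tail contribution to the rank can be made uniformly small \emph{before} letting $n\to\infty$, so that the extraction over $\varepsilon$ and $n$ can be interleaved coherently. A conceptually cleaner route exists when $f$ is real-valued: sort the diagonal of $D_n$ increasingly, use Lemma \ref{lem:rearr_main} to see that the sorted values converge a.e.\ to the monotone rearrangement $f^\dag$, deduce — via the Riemann-integrable diagonal-sampling building block plus a zero-distributed diagonal correction — that the sorted sequence is GLT with symbol $f^\dag$, and then pass from $f^\dag$ to $f$ by a further permutation approximating a measure-preserving $\sigma$ with $f=f^\dag\circ\sigma$. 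However, that last step is once more a transport argument, and monotone rearrangement is unavailable for complex $f$, so for the statement as given the direct approach above appears to be the natural one.
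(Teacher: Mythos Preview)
The paper does not prove this statement: it is quoted as Theorem~2 of \cite{barbarinoDM} and used as a black box in the proof of Lemma~\ref{lem:splitting}. There is therefore no in-paper argument against which to compare your proposal.

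On its own merits your transport-and-perturbation argument is sound and is in fact a natural way to establish the result. Two minor points. First, the assertion ``a GLT sequence of normal matrices is distributed in the eigenvalue sense as its symbol'' is stronger than what you actually use and is not a clean textbook statement in that generality; it is safer to build $\hat D_n$ directly as (an a.c.s.\ limit of) diagonal samplings of continuous approximants of $f$, so that both $\{\hat D_n\}_n\GLT f$ and $\{\hat D_n\}_n\sim_\lambda f$ hold by construction rather than by inference from GLT membership. Second, your rank estimate is even a little generous: the number of unmatched positions is $d_n-\sum_{\ell\le L}\min(N_\ell^{(n)},\hat N_\ell^{(n)})=\sum_{\ell\le L}\max(0,N_\ell^{(n)}-\hat N_\ell^{(n)})+N_\infty^{(n)}\le o(d_n)+d_n(\varepsilon+o(1))$, so $\varepsilon$ rather than $2\varepsilon$ already suffices. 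After the diagonal extraction over $\varepsilon=1/m$, the decomposition $P_nD_nP_n^{T}-\hat D_n=N_n+R_n$ with $\|N_n\|\to0$ and $\rk R_n=o(d_n)$ exhibits $\{\hat D_n\}_n$ as an a.c.s.\ for $\{P_nD_nP_n^{T}\}_n$, and closure of the GLT class under a.c.s.\ limits yields the claim.
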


\begin{lemma}[Lemma 5.1, Lemma 4.9 \cite{barbarinoREDUCED}]\label{lem:reduced_GLT_split}
Let $\lambda^{(n)}_1, \lambda^{(n)}_2, \dots, \lambda^{(n)}_{n}$ a sequence of $n$ real values for any $n\in\f N$, and $D_n\coloneqq\diag(\lambda^{(n)}_i)_{i=1,\dots,n}$.
If $\{D_n\}_n\GLT h(x)$ where $h:[0,1]\to \f C$ is a measurable function, then for any $j=1,\dots,k$,
\begin{equation*}
\{D^{(j)}_n\}_n\coloneqq\left\{\diag\left(\lambda^{(n)}_{L^{(n)}_1 +\dots + L^{(n)}_{j-1} + i} \right)_{i=
	1,\dots,L^{(n)}_j
}\right\}_n \sim_\lambda h(x)|_{[(j-1)/k,j/k]},
\end{equation*}
where $L^{(n)}_{j}$ are all integer numbers such that
\begin{itemize}
	\item $L^{(n)}_{j}/n \to 1/k$ for all $j=1,\dots,k$,
	\item $\sum_{j=1}^k L^{(n)}_{j} = n$.
\end{itemize}
\end{lemma}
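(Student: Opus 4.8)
The plan is to lift the claim to dimension $n$ by realizing the extraction of the $j$-th block of $D_n$ as multiplication by the diagonal sampling matrix of an indicator function, and then to run it through the standard GLT calculus. Write $D_n(a)\coloneqq\diag(a(i/n))_{i=1}^n$ for $a\colon[0,1]\to\f C$, set $P^{(n)}_0\coloneqq 0$ and $P^{(n)}_\ell\coloneqq L^{(n)}_1+\dots+L^{(n)}_\ell$, let $T^{(n)}_j\coloneqq\{P^{(n)}_{j-1}+1,\dots,P^{(n)}_j\}$ be the index set defining $D^{(j)}_n$, and let $w_{j,n}\coloneqq\mathbf 1_{(P^{(n)}_{j-1}/n,\,P^{(n)}_j/n]}$, so that $D_n(w_{j,n})$ is the diagonal $0$--$1$ matrix whose support is exactly $T^{(n)}_j$. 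Then $E_n^{(j)}\coloneqq D_n(w_{j,n})D_n$ is a real diagonal (hence Hermitian) matrix whose multiset of eigenvalues is $\{\lambda^{(n)}_i:i\in T^{(n)}_j\}$ together with $n-L^{(n)}_j$ copies of $0$, i.e.\ the spectrum of $D^{(j)}_n$ padded with zeros.

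I would then proceed as follows. (i) \emph{Identify the GLT symbol of $\{D_n(w_{j,n})\}_n$.} Since $L^{(n)}_\ell/n\to 1/k$ for each of the finitely many $\ell$, also $P^{(n)}_\ell/n\to\ell/k$; hence $w_{j,n}$ agrees with the fixed indicator $w_j\coloneqq\mathbf 1_{((j-1)/k,\,j/k]}$ at all but $o(n)$ of the sample points $i/n$, so $D_n(w_{j,n})-D_n(w_j)$ has rank $o(n)$, is therefore zero-distributed, and is a GLT sequence with symbol $0$. As $w_j$ is bounded and continuous off two points it is Riemann integrable, so $\{D_n(w_j)\}_n\GLT w_j(x)$, and adding the zero-distributed correction yields $\{D_n(w_{j,n})\}_n\GLT w_j(x)$. (ii) \emph{Multiply.} By the product rule for GLT sequences together with the hypothesis $\{D_n\}_n\GLT h(x)$, we get $\{E_n^{(j)}\}_n\GLT w_j(x)h(x)$. (iii) \emph{Pass to eigenvalues.} The matrices $E_n^{(j)}$ are Hermitian, so $\{E_n^{(j)}\}_n\sim_\lambda w_j(x)h(x)$, i.e.\ for every $F\in C_c(\mathbb R)$ one has $\frac1n\sum_{i\in T^{(n)}_j}F(\lambda^{(n)}_i)+\frac{n-L^{(n)}_j}{n}F(0)\to\int_0^1 F(w_j(x)h(x))\,{\rm d}x$; since $w_j(x)h(x)$ equals $h(x)$ on $((j-1)/k,\,j/k]$ and $0$ elsewhere, the right-hand side is $\int_{(j-1)/k}^{j/k}F(h(x))\,{\rm d}x+(1-1/k)F(0)$. (iv) \emph{Clean up.} Because $(n-L^{(n)}_j)/n\to 1-1/k$, the $F(0)$ terms cancel, leaving $\frac1n\sum_{i\in T^{(n)}_j}F(\lambda^{(n)}_i)\to\int_{(j-1)/k}^{j/k}F(h(x))\,{\rm d}x$; multiplying by $n/L^{(n)}_j\to k$ and substituting $x=(j-1)/k+y/k$ gives $\frac1{L^{(n)}_j}\sum_{i\in T^{(n)}_j}F(\lambda^{(n)}_i)\to\int_0^1 F(h((j-1)/k+y/k))\,{\rm d}y$, which is exactly $\{D^{(j)}_n\}_n\sim_\lambda h(x)|_{[(j-1)/k,j/k]}$ in the sense of Definition~\ref{def:eig-sv distr}.

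The only step that is not pure formalism is (i): the cardinalities $L^{(n)}_j$ are merely asymptotic to $n/k$, so the indicator carving out the combinatorial block is genuinely $n$-dependent and the textbook statement ``$\{D_n(a)\}_n\GLT a(x)$ for $a$ Riemann integrable'' does not apply to it verbatim; one must absorb this mismatch, either through the $o(n)$-rank / zero-distributed argument above, or equivalently by first building the partition for a fixed rational subdivision of $[0,1]$ and then using the $o(n)$-rank stability of $\sim_\lambda$ (Corollary~5.2 of \cite{GLT-book-I}) to pass to the prescribed block sizes $L^{(n)}_j$. The remaining ingredients --- the GLT algebra (sums and products), the implication ``Hermitian GLT sequence $\Rightarrow$ eigenvalue distribution equal to the symbol'', and the affine change of variables identifying $w_j(x)h(x)$ with $h(x)|_{[(j-1)/k,j/k]}$ --- are standard.
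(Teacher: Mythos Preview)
Your argument is correct. The key steps --- identifying $\{D_n(w_{j,n})\}_n\GLT w_j(x)$ via an $o(n)$-rank correction to the Riemann-integrable indicator, multiplying by $\{D_n\}_n\GLT h(x)$ through the GLT algebra, passing to eigenvalues by Hermitianity, and then stripping off the padding zeros by cancelling the $F(0)$ contributions --- are all sound, and your caveat in (i) about the $n$-dependent indicator is well placed and correctly handled.

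As to comparison: the paper does not prove this lemma at all; it is quoted verbatim from \cite{barbarinoREDUCED} (Lemma~5.1 and Lemma~4.9 there) and used as a black box inside Lemma~\ref{lem:splitting}. So there is no in-paper proof to compare against. Your route through the GLT product rule and indicator multiplication is in the spirit of how such block-restriction results are typically established in the reduced/restricted GLT framework of \cite{barbarinoREDUCED}, and is a clean, self-contained derivation from the standard GLT axioms.
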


\begin{theorem}[Theorem 3.1, \cite{GLT-book-I}]\label{th:su}
Let $\serie A\sim_\lambda f$ for $d_n\times d_n$ matrices $A_n$ and some measurable function $f:D\to \f C$. Let $R^f$ be the range of $f$ and $R^f_\epsilon$ its $\epsilon$-expansion, that is $R^f_\epsilon = \cup_{x\in R^f}\{y\in \f C : |y-x|\le \epsilon  \}$. If $\epsilon >0$, then
\[
\left| \{ j\in\{1,\dots,n\} : \lambda_j(A_n)\not\in R^f_\epsilon  \}   \right| = o(d_n).
\]
\end{theorem}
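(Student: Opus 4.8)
The plan is to read the statement off from the relation $\{A_n\}\sim_\lambda f$ by testing it against a suitable family of continuous functions with bounded support, using the basic fact that $f(x)\in R^f$ for every $x$ in its domain $D$ (which, by the very meaning of $\sim_\lambda$, is a set of finite and positive Lebesgue measure $\mu(D)$, so that the constant $1$ is integrable over $D$). The only genuine subtlety is that $R^f$, and hence $R^f_\epsilon$, may be unbounded, so that no admissible test function can be identically $1$ on all of $(R^f_\epsilon)^C$; I handle this by first confining all but $o(d_n)$ of the eigenvalues to a large disk and only then localising the ``bad'' ones.

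\noindent\emph{Step 1 (confining the bulk of the spectrum).}
For $\rho>0$ fix a continuous $\phi_\rho\colon[0,\infty)\to[0,1]$ with $\phi_\rho\equiv1$ on $[0,\rho]$ and $\phi_\rho\equiv0$ on $[\rho+1,\infty)$, and put $G_\rho(z)\coloneqq\phi_\rho(|z|)$, a continuous function with bounded support. Testing $\{A_n\}\sim_\lambda f$ against $G_\rho$ gives $\frac1{d_n}\sum_{j}G_\rho(\lambda_j(A_n))\to\frac1{\mu(D)}\int_D G_\rho(f)\,\mathrm d\mu$; since $|f(x)|<\infty$ for every $x$ and $\mu(D)<\infty$, dominated convergence yields $\frac1{\mu(D)}\int_D G_\rho(f)\,\mathrm d\mu\to1$ as $\rho\to\infty$. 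Hence, given $\eta>0$, I would pick $\rho_\eta$ with $\frac1{\mu(D)}\int_D G_{\rho_\eta}(f)\,\mathrm d\mu>1-\eta$ and set $R_\eta\coloneqq\rho_\eta+1$; since $G_{\rho_\eta}\le\mathbf 1_{\{|z|\le R_\eta\}}$, the displayed limit forces $|\{j:|\lambda_j(A_n)|>R_\eta\}|\le\eta\,d_n$ for all large $n$.

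\noindent\emph{Step 2 (localising the bad eigenvalues inside the disk).}
With $R_\eta$ now fixed, let $\psi_\epsilon\colon[0,\infty)\to[0,1]$ be continuous with $\psi_\epsilon\equiv0$ on $[0,\epsilon/2]$ and $\psi_\epsilon\equiv1$ on $[\epsilon,\infty)$, and set $F_\epsilon(z)\coloneqq\phi_{R_\eta}(|z|)\,\psi_\epsilon(\mathrm{dist}(z,R^f))$. As $z\mapsto\mathrm{dist}(z,R^f)$ is $1$-Lipschitz, $F_\epsilon$ is continuous, and it is supported in $\{|z|\le R_\eta+1\}$, hence admissible. Since $\mathrm{dist}(f(x),R^f)=0$ for every $x\in D$, we get $F_\epsilon(f(x))=0$, so $\frac1{\mu(D)}\int_D F_\epsilon(f)\,\mathrm d\mu=0$, and consequently $\frac1{d_n}\sum_j F_\epsilon(\lambda_j(A_n))\to0$. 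On the other hand $F_\epsilon\ge\mathbf 1_{\{\mathrm{dist}(z,R^f)\ge\epsilon,\ |z|\le R_\eta\}}$, whence $|\{j:\mathrm{dist}(\lambda_j(A_n),R^f)\ge\epsilon,\ |\lambda_j(A_n)|\le R_\eta\}|=o(d_n)$.

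\noindent\emph{Step 3 (conclusion).}
If $\lambda_j(A_n)\notin R^f_\epsilon$, then $|\lambda_j(A_n)-w|>\epsilon$ for all $w\in R^f$, hence $\mathrm{dist}(\lambda_j(A_n),R^f)\ge\epsilon$; so this index is counted either in Step 2 (when $|\lambda_j(A_n)|\le R_\eta$) or in Step 1 (when $|\lambda_j(A_n)|>R_\eta$). Thus $|\{j:\lambda_j(A_n)\notin R^f_\epsilon\}|\le o(d_n)+\eta\,d_n$, so $\limsup_n\frac1{d_n}|\{j:\lambda_j(A_n)\notin R^f_\epsilon\}|\le\eta$, and letting $\eta\to0$ gives the claim. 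The point requiring the most care is the interplay of the two cut-offs in Steps~1--2: the modulus radius $R_\eta$ must be fixed (depending on $\eta$) \emph{before} the distance test function $F_\epsilon$ is built, so that both the ``escaping'' eigenvalues and the ``bounded but far from $R^f$'' ones are captured by genuinely compactly supported test functions.
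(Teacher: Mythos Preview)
Your argument is correct. The two-step cut-off (first a large-modulus cut-off via $G_\rho$ to trap all but $\eta d_n$ eigenvalues in a fixed disk, then a compactly supported distance cut-off $F_\epsilon$ vanishing on $R^f_{\epsilon/2}$) handles the possible unboundedness of $R^f$ cleanly, and the dominated-convergence step in Step~1 is justified because $\mu(D)<\infty$. The final $\eta\to0$ diagonal argument is the right way to pass from ``$\le\eta d_n$ plus $o(d_n)$'' to ``$o(d_n)$''.

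Note that the paper does not actually prove this statement: Theorem~\ref{th:su} is quoted verbatim from \cite[Theorem~3.1]{GLT-book-I} and used as a black box in the proof of Lemma~\ref{lem:splitting}. Your self-contained proof is essentially the standard one for this kind of result and would serve equally well here.
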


The following is sometimes referred to as the Dini second theorem \cite[pp.~81 and 270, Problem~127]{Polya-Szego}.

\begin{lemma}\label{Dini}
If a sequence of monotone functions converges pointwise on a compact interval to a continuous function, then it converges uniformly.
\end{lemma}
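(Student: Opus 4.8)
The plan is to run the classical ``three-epsilon'' argument, using the monotonicity of each $f_n$ to reduce the control of $f_n$ on all of $[a,b]$ to its control at finitely many sample points, where pointwise convergence can be invoked, and using the uniform continuity of the limit to bridge between the nodes.

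Write $[a,b]$ for the compact interval and $f$ for the (continuous) pointwise limit of the monotone functions $f_n:[a,b]\to\mathbb R$. Fix $\varepsilon>0$. Since $f$ is continuous on a compact interval it is uniformly continuous, so there is a partition $a=x_0<x_1<\dots<x_m=b$ such that $|f(x)-f(y)|<\varepsilon$ whenever $x,y$ lie in a common subinterval $[x_{i-1},x_i]$. Because $f_n\to f$ pointwise at each of the finitely many nodes $x_0,\dots,x_m$, there is an index $N$ with $|f_n(x_i)-f(x_i)|<\varepsilon$ for all $i$ and all $n\ge N$. Now fix any $x\in[a,b]$ and let $[x_{i-1},x_i]$ be a subinterval containing it: monotonicity of $f_n$ forces $f_n(x)$ to lie between $f_n(x_{i-1})$ and $f_n(x_i)$, i.e. $\min(f_n(x_{i-1}),f_n(x_i))\le f_n(x)\le\max(f_n(x_{i-1}),f_n(x_i))$. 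For $n\ge N$ each endpoint value is within $\varepsilon$ of $f(x_{i-1})$, resp. $f(x_i)$, and each of those is within $\varepsilon$ of $f(x)$ by the choice of partition; combining the two estimates gives $|f_n(x)-f(x)|<2\varepsilon$. Since $x$ was arbitrary, $\sup_{x\in[a,b]}|f_n(x)-f(x)|\le 2\varepsilon$ for all $n\ge N$, and since $\varepsilon>0$ was arbitrary this is precisely uniform convergence.

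There is no genuine obstacle here; the only point worth a word of care is that one does not assume the limit $f$ is itself monotone, nor that the $f_n$ share a common direction of monotonicity (it may depend on $n$). The argument only uses that each individual $f_n$ is monotone, so its graph over a subinterval is trapped between its two endpoint values — and this trapping is exactly what makes finitely many nodes suffice to enforce uniformity.
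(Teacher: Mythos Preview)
Your proof is correct and is the standard ``three-$\varepsilon$'' argument for this result. Note, however, that the paper does not actually prove this lemma: it is stated with a citation to P\'olya--Szeg\H{o} (as the ``Dini second theorem'') and used as a black box in the proof of Theorem~\ref{th:spectral_bounded_means_uniform}. So there is no in-paper proof to compare against; your argument simply supplies what the paper outsources to the literature.
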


\section{Eigenstructure of Flipped Toeplitz matrices}\label{sec:main}

By combining old and recent results, including those in Subsection \ref{ssec:aux}, we describe specific properties related to the eigenstructure of flipped Toeplitz matrices. We start by providing the eigenstructure of $Y_n=H_n(1)$. Then the rest of the section is divided into three subsections.
Subsection \ref{ssec: real symm} treats eigenvalues and eigenvectors of $H_n(f)$ in the case where $f$ is even and real-valued (which corresponds to real Fourier coefficients with $\hat f_{k}=\hat f_{-k}$, for any integer $k$). Subsection \ref{ssec: general} contains general results on the spectral distribution of matrix sequences, not necessarily of structured type. Finally Subsection \ref{ssec: real nonsymm} treats eigenvalues and eigenvectors of $H_n(f)$ in the case where $f$ is complex-valued and the Fourier coefficients are still real.

First we begin with an algebraic study, which relies on the Cantoni-Butler Theorem \ref{th:Cantoni-Butler}.
A vector $\vv\in\mathbb R^n$ is called symmetric if $Y_n\vv=\vv$ and skew-symmetric if $Y_n\vv=-\vv$.
An $n\times n$ matrix $A$ is called centrosymmetric if it is symmetric with respect to its center, i.e.,
\begin{equation}\label{centrosym}
A_{ij}=A_{n-i+1,n-j+1},\qquad i,j=1,\ldots,n.
\end{equation} 
Equivalently, $A$ is centrosymmetric if
\[ Y_nAY_n = A. \]
Note that any symmetric Toeplitz matrix is centrosymmetric.

One eigendecomposition of the antidiagonal $H_n(1)$, which is centrosymmetric according to the relation in (\ref{centrosym}), is described as follows and its verification is a direct check
\begin{equation*}
H_n(1)=\mathbb{S}_n \mathbb{H}_n \mathbb{S}_n,
\end{equation*}
where $\mathbb{H}_n$ is a diagonal matrix
\begin{equation*}
\mathbb{H}_n=\left[
\begin{array}{rrrrrrrrrrr}
1\\
&-1\\
&&1\\
&&&-1\\
&&&&\ddots\\
&&&&&(-1)^{n+1}
\end{array}
\right],
\end{equation*}
that is, $(\mathbb{H}_n)_{i,i}=(-1)^{i+1}$ and $\mathbb{S}_n$ is the unitary discrete sine transform
\begin{equation}\label{S_n}
\begin{split}
\mathbb{S}_n&=\sqrt{\frac{2}{n+1}}\left(\sin\left(\frac{ij\pi}{n+1}\right)\right)_{i,j=1}^{n}\\
&=[\mathbf{v}_1^{(n)}, \mathbf{v}_2^{(n)} ,\ldots, \mathbf{v}_n^{(n)}],
\end{split}
\end{equation}
where $\mathbf{v}_j^{(n)}$ is the $j$th column,
\begin{equation}\label{S_n columns}
\mathbf{v}_j^{(n)}=\sqrt{\frac{2}{n+1}}\left[
\begin{array}{c}
\sin(j\pi/(n+1))\\
\sin(2j\pi/(n+1))\\
\vdots\\
\sin(nj\pi/(n+1))
\end{array}
\right]
\end{equation}
and $H_n(1)\mathbf{v}_i^{(n)}=(-1)^{i+1}\mathbf{v}_i^{(n)}$, $i=1,\ldots,n$. Of course $H_n(1)=Y_n$, but it is interesting to show that he latter type of relation holds any real symmetric Toeplitz matrix $T_n$ and for its flipped counterpart $H_n=Y_n T_n$.

\begin{theorem}\label{cbt}
Let $T_n$ be a real symmetric Toeplitz matrix of size $n$ and let $H_n=Y_nT_n$. Then, the following properties hold.
\begin{enumerate}
\item There exists an orthonormal basis of $\mathbb R^n$ consisting of eigenvectors of $T_n$ such that $\lceil n/2\rceil$ vectors of this basis are symmetric and the other $\lfloor n/2\rfloor$ vectors are skew-symmetric.
\item Let $\{\vv_1,\ldots,\vv_n\}$ be a basis of $\mathbb R^n$ such that:
	\begin{itemize}
		\item $T_n\vv_i=\lambda_i(T_n)\vv_i$ for $i=1,\ldots,n$;
		\item $\vv_1$ is symmetric, $\vv_2$ is skew-symmetric, $\vv_3$ is symmetric, and so on until $\vv_n$, which is either symmetric or skew-symmetric depending on whether $n$ is odd or even.
	\end{itemize}
	Then, the eigenpairs of $H_n$ are given by
	\[ (\lambda_i(H_n),\vv_i),\qquad i=1,\ldots,n, \]
	with
	\[ \lambda_i(H_n)=(-1)^{i+1}\lambda_i(T_n),\qquad i=1,\ldots,n. \]
\end{enumerate}
\end{theorem}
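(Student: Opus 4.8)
The plan is to build everything on the single observation that a real symmetric Toeplitz matrix is centrosymmetric, i.e.\ $Y_nT_nY_n=T_n$, and hence (since $Y_n^2=I_n$) commutes with the flip matrix: $Y_nT_n=T_nY_n$. Write $\mathbb R^n=V_+\oplus V_-$ for the orthogonal decomposition into the eigenspaces $V_\pm=\ker(Y_n\mp I_n)$ of the symmetric involution $Y_n$; these are precisely the spaces of symmetric and skew-symmetric vectors, and from the already displayed factorization $H_n(1)=Y_n=\mathbb{S}_n\mathbb{H}_n\mathbb{S}_n$ (the diagonal of $\mathbb{H}_n$ being $(-1)^{i+1}$) one reads off $\dim V_+=\lceil n/2\rceil$ and $\dim V_-=\lfloor n/2\rfloor$. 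Because $T_n$ commutes with $Y_n$, each of $V_+$ and $V_-$ is $T_n$-invariant; restricting the symmetric operator $T_n$ to each subspace and applying the spectral theorem yields an orthonormal eigenbasis of $V_+$ made of symmetric vectors and an orthonormal eigenbasis of $V_-$ made of skew-symmetric vectors. Their union is the orthonormal eigenbasis of $\mathbb R^n$ required in item~1.

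For item~2, I would first note that the hypothesis is non-vacuous: item~1 provides $\lceil n/2\rceil$ independent symmetric eigenvectors and $\lfloor n/2\rfloor$ independent skew-symmetric ones, and since $\{1,\dots,n\}$ contains exactly $\lceil n/2\rceil$ odd indices and $\lfloor n/2\rfloor$ even indices, one can label the eigenvectors (and correspondingly the eigenvalues $\lambda_i(T_n)$) so as to realize the alternating symmetry pattern $\vv_1$ symmetric, $\vv_2$ skew-symmetric, and so on. Then for any basis $\{\vv_i\}$ meeting the hypotheses one has $Y_n\vv_i=(-1)^{i+1}\vv_i$, whence
\[
H_n\vv_i=Y_nT_n\vv_i=\lambda_i(T_n)\,Y_n\vv_i=(-1)^{i+1}\lambda_i(T_n)\,\vv_i .
\]
So each $\vv_i$ is an eigenvector of $H_n$ with eigenvalue $(-1)^{i+1}\lambda_i(T_n)$; as the $\vv_i$ form a basis of $\mathbb R^n$, these are all the eigenpairs of $H_n$, which is exactly item~2 with $\lambda_i(H_n):=(-1)^{i+1}\lambda_i(T_n)$. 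This is of course consistent with the Cantoni--Butler Theorem~\ref{th:Cantoni-Butler}, which records the same pairing of eigenvalues but leaves their order unspecified.

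I do not expect a serious obstacle; the argument is essentially the simultaneous diagonalization of two commuting symmetric matrices together with a parity count. The only points that need a little care are: computing $\dim V_\pm$ (immediate from the $\mathbb{S}_n\mathbb{H}_n\mathbb{S}_n$ factorization, or by a direct count of the fixed points of the reversal permutation); handling eigenvalues of $T_n$ of multiplicity larger than one whose eigenspace meets both $V_+$ and $V_-$, which is automatic since the commutation makes every $T_n$-eigenspace $Y_n$-invariant and hence a direct sum of its symmetric and skew-symmetric parts; and verifying that the index parities in $\{1,\dots,n\}$ match the dimensions $\lceil n/2\rceil,\lfloor n/2\rfloor$ so that the alternating labelling in item~2 is possible.
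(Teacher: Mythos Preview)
Your proposal is correct and follows essentially the same route as the paper. For item~2 your computation $H_n\vv_i=Y_nT_n\vv_i=\lambda_i(T_n)Y_n\vv_i=(-1)^{i+1}\lambda_i(T_n)\vv_i$ is identical to the paper's; for item~1 the paper simply invokes the Cantoni--Butler reference \cite{Cantoni1976}, whereas you spell out the underlying mechanism (commutation $Y_nT_n=T_nY_n$, the $Y_n$-eigenspace decomposition $V_+\oplus V_-$, and the spectral theorem on each piece), which is exactly the content of that reference and arguably makes your write-up more self-contained.
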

\begin{proof} \ \\
\begin{itemize}
\item The matrix $T_n$ is symmetric centrosymmetric. Hence, the result follows from Theorem \ref{th:Cantoni-Butler}.

\item  Since $\vv_i$ is alternatively symmetric and skew-symmetric (starting with symmetric), for $i=1,\ldots,n$ we have
\[
H_n\vv_i=Y_nT_n\vv_i=Y_n\lambda_i(T_n)\vv_i=\lambda_i(T_n)Y_n\vv_i=\lambda_i(T_n)(-1)^{i+1}\vv_i. 
\]
\end{itemize}

\qed
\end{proof}

\subsection{Real Symmetric Case}\label{ssec: real symm}

The current subsection contains three theorems of increasing generality, regarding the relationships among the eigenvalues of $H_n(f)=Y_n T_n(f)$, the eigenvalues of $T_n(f)$, the evaluations of the generating function $f$ on a a.u. grid, at least when the generating function $f$ is real-valued, even, and Riemann integrable.

\begin{theorem}\label{th:Carl}
Let $f:[-\pi,\pi]\to \f R$ be a real even continuous function which is positive and strictly monotone
increasing on $[0,\pi]$. Then there exists an a.u.\ grid $\{\xi_{1,n}, \xi_{2,n}, \dots, \xi_{n,n}\}_n$ such that
the eigenvalues of $T_n(f)$ and $H_n(f)$ are given by
\begin{align*}
\lambda_i(T_n(f)) &= f(\xi_{i,n}),\\
\lambda_i(H_n(f)) &= (-1)^{i+1} \lambda_i(T_n(f)),
\end{align*}
for all $i=1,\dots,n$.
\end{theorem}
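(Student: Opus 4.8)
The plan is to build the grid from the known eigenvalue distribution of $\{T_n(f)\}$ together with the localization result for real Toeplitz matrices, and then to upgrade the ``distribution'' statement (a weak, integral-against-test-functions statement) to the pointwise sampling statement claimed. First I would note that since $f$ is real-valued, each $T_n(f)$ is Hermitian, so by Theorem \ref{teoszego-tyr} we have $\{T_n(f)\}\sim_\lambda (f,[-\pi,\pi])$, and since $f$ is even and strictly monotone increasing on $[0,\pi]$ its range is the interval $[f(0),f(\pi)]$, with $f$ non-constant, so all eigenvalues lie in the open interval $(f(0),f(\pi))$ by the localization results quoted after Theorem \ref{teoszego-tyr}. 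Sorting the eigenvalues in non-decreasing order, $\lambda_1(T_n(f))\le\cdots\le\lambda_n(T_n(f))$, I then want to define $\xi_{i,n}\in[0,\pi]$ by $\xi_{i,n}=f^{-1}(\lambda_i(T_n(f)))$, using that $f$ restricted to $[0,\pi]$ is a continuous increasing bijection onto $[f(0),f(\pi)]$; this immediately gives the first displayed identity $\lambda_i(T_n(f))=f(\xi_{i,n})$ by construction, and the second displayed identity is then exactly the Cantoni--Butler Theorem \ref{th:Cantoni-Butler} (or, with the matching eigenvectors, Theorem \ref{cbt}, since symmetric Toeplitz matrices are centrosymmetric), once one checks the orderings are compatible.

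The substantive step is proving that $\{\xi_{i,n}\}_n$ is an a.u.\ grid on $[0,\pi]$ (equivalently, after rescaling, on $[0,1]$). Here is where I would invoke the monotone rearrangement machinery: the even function $f$ on $\Omega=[-\pi,\pi]$ has monotone rearrangement $f^\dag:(0,1)\to\mathbb R$ which, because $f$ is even and increasing on $[0,\pi]$, is precisely $f^\dag(y)=f(\pi y)$ (the level set $\{f\le u\}$ has measure $2\pi^{-1}f^{-1}(u)/\pi\cdot\pi$ ... more precisely $\mu_1\{f\le u\}=2f^{-1}(u)$, so $f^\dag(y)=\inf\{u:2f^{-1}(u)/(2\pi)\ge y\}=f(\pi y)$). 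From $\{T_n(f)\}\sim_\lambda(f,[-\pi,\pi])$ together with the sorting, the piecewise-linear interpolant of the sorted eigenvalues over the uniform grid converges to $f^\dag$ pointwise at continuity points; this is Lemma \ref{lem:rearr_main} applied to $g_n=$ (the non-decreasing spline through the sorted eigenvalues) and $g=f^\dag$. Since $f^\dag(y)=f(\pi y)$ is continuous and strictly increasing, convergence holds everywhere on $(0,1)$ and, by Dini's theorem (Lemma \ref{Dini}), uniformly; in particular $\lambda_i(T_n(f))=f^\dag(i/n)+o(1)$ uniformly in $i$. Applying the uniformly continuous inverse $g\mapsto \pi^{-1}(f^\dag)^{-1}(g)=\pi^{-1}(f^{-1}(g))\cdot$ ... that is, applying $f^{-1}$ and dividing by $\pi$, I get $\xi_{i,n}/\pi = (i/n) + o(1)$ uniformly, which is exactly the definition of an a.u.\ grid on $[0,\pi]$ (cf.\ \eqref{asym unif gridding}).

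The main obstacle, and the place to be careful, is the uniform control: the spectral distribution Theorem \ref{teoszego-tyr} is only an integral statement, so passing to the pointwise/uniform sampling statement genuinely needs the rearrangement lemmas (Lemma \ref{lem:rearr_main}) plus the fact that the limiting rearrangement $f^\dag$ is continuous and strictly monotone, which is where all the hypotheses on $f$ (even, continuous, strictly increasing on $[0,\pi]$, and hence non-constant with an interval range) get used; continuity of $f^\dag$ is what feeds Dini's theorem to yield uniform rather than merely pointwise convergence, and strict monotonicity is what makes $(f^\dag)^{-1}$ well-defined and uniformly continuous so that the grid points inherit the uniform estimate. Positivity of $f$ plays a minor role (it is not strictly needed for the eigenvalue statement, but it guarantees $T_n(f)$ is positive definite and is presumably included for the MINRES applications and consistency with later theorems). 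I would also double-check the edge effects in matching the index $i$ with the rearrangement variable $i/n$ versus $i/(n-1)$ or $(i-1)/(n-1)$: these discrepancies are $O(1/n)$ and harmless for the a.u.\ property, but worth a sentence.
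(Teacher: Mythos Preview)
Your proof is correct and takes a genuinely different route from the paper's own argument for this theorem. The paper proves the a.u.\ property of the grid by contradiction: it assumes $\max_i|\xi_{i,n}-i\pi/n|\ge\epsilon$ infinitely often, passes to a subsequence, and then tests the spectral distribution relation for $\{H_n(f)\}$ (not $\{T_n(f)\}$) against a characteristic function $\chi_{(-\infty,f(\xi\pm\epsilon/2)]}$; the positivity of $f$ is used there to separate the odd-index (positive) and even-index (negative) eigenvalues of $H_n(f)$ and extract a counting contradiction. Your argument instead works entirely with $\{T_n(f)\}\sim_\lambda f$, uses the monotone rearrangement $f^\dag(y)=f(\pi y)$, applies Lemma~\ref{lem:rearr_main} plus Dini (Lemma~\ref{Dini}) to get uniform convergence of the sorted eigenvalues to $f^\dag$, and then pushes through the uniformly continuous inverse $f^{-1}$.

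Your route is essentially the method the paper develops later as Theorem~\ref{th:spectral_bounded_means_uniform} (the general rearrangement/Dini machinery), specialized to this case; in that sense you have anticipated the paper's more general tool and applied it directly, bypassing the ad~hoc characteristic-function argument. What your approach buys is cleanliness and immediate generalizability: you never touch the $H_n(f)$ distribution, and positivity of $f$ is irrelevant for the a.u.\ part. What the paper's approach buys is self-containment for this theorem and a demonstration of how the $\pm|f|$ distribution of the flipped sequence can be exploited. One small point to tidy: for Dini on the compact $[0,1]$ you need endpoint convergence of the spline, which follows here from $\lambda_i(T_n(f))\in(f(0),f(\pi))$ together with monotonicity (the paper handles the analogous step explicitly in \eqref{eq:extrema_conv}); your closing remark about ``edge effects'' acknowledges this but you should make the argument explicit.
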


\begin{proof}
By Cantoni-Butler Theorem \ref{th:Cantoni-Butler}, there exists an ordering of the eigenvalues of $T_n(f)$ and $H_n(f)$ such that  $\lambda_i(H_n(f)) = (-1)^{1+1}\lambda_{i}(T_n(f))$. Moreover, since $f$ is strictly increasing on $[0,\pi]$ and $\Lambda(T_n(f))\cu\text{Range}(f)$, then for every $i$ there exists an unique point $\wt \xi_{i,n}$ in $[0,\pi]$ such that  $\lambda_i(T_n(f))=f(\wt \xi_{i,n})$, and we just need to prove that they form an a.u.\ grid, that is
    \begin{equation}\label{to_prove}
    \max_{i=1,\ldots,n}|\xi_{i,n}-\theta_{i,n}|\to0\,\mbox{ as }\,n\to\infty,
    \end{equation}
where $\theta_{i,n} = i\pi /n$, and the $\xi_{i,n}$ are just the $\wt \xi_{i,n}$ sorted in an increasing manner.

Suppose by contradiction that \eqref{to_prove} is not satisfied. Then, we have
\[ \max_{i=1,\ldots,n}|\xi_{i,n}-\theta_{i,n}|\ge\epsilon \]
infinitely often (i.o.)\ for some fixed $\epsilon>0$. Hence, there exists a sequence $\{\xi_{i(n),n}\}_n$ such that
\[ |\xi_{i(n),n}-\theta_{i(n),n}|\ge\epsilon\,\mbox{ i.o.} \]
There are two possible (mutually non-exclusive) cases.

    \medskip

    \noindent{\em Case 1: $\xi_{i(n),n}-\theta_{i(n),n}\ge\epsilon$ i.o.}
    Take a subsequence $\{\xi_{i(m),m}-\theta_{i(m),m}\}_{m}$ of $\{\xi_{i(n),n}-\theta_{i(n),n}\}_n$ such that
    \begin{itemize}
        \item $\xi_{i(m),m}-\theta_{i(m),m}\ge\epsilon\,\mbox{ for all }\,m$,
        \item $\xi_{i(m),m}\to\xi, \quad\xi\in[0,\pi], \qquad \xi_{i(m),m} > \xi - \epsilon/2\,\mbox{ for all }\,m$,
        \item $\theta_{i(m),m}\to\theta,\quad\theta\in[0,\pi]$.
    \end{itemize}
    
    In particular, we find that $\xi \ge\theta + \epsilon$.
    By \cite{Ferrari2019,Hon2019,Mazza2018}, we have
    \begin{equation}\label{...lambda...}
    \frac1n\sum_{i=1}^mF((-1)^{i+1}f(\xi_{i,m}))=\frac1m\sum_{i=1}^mF(\lambda_i(H_m(f)))\to\frac1{2\pi}\int_0^\pi F(f(\varphi)){\rm d}\varphi+\frac1{2\pi}\int_0^\pi F(-f(\varphi)){\rm d}\varphi
    \end{equation}
    for all bounded functions $F:\mathbb R\to\mathbb R$ with at most a finite number of discontinuities (recall that $f$ is strictly monotone increasing, which implies that the sets $\{f=a\}$ and $\{-f=a\}$ have zero measure for all $a\in\mathbb R$). By choosing $F=\chi_{(-\infty,f(\xi - \epsilon/2)]}$ in \eqref{...lambda...} and keeping in mind that $f$ is positive and strictly monotone increasing on $[0,\pi]$, we obtain
    \begin{align}\nonumber
    &\frac1m\sum_{i=1}^m\chi_{(-\infty,f(\xi- \epsilon/2)]}((-1)^{i+1}f(\xi_{i,n}))\to\frac1{2\pi}\int_0^\pi \chi_{(-\infty,f(\xi- \epsilon/2)]}(f(\varphi)){\rm d}\varphi+\frac1{2\pi}\int_0^\pi\chi_{(-\infty,f(\xi- \epsilon/2)]}(-f(\varphi)){\rm d}\varphi\\\nonumber
    &\iff\frac{\#\{i\in\{1,\ldots,m\}:i\mbox{ is even }\,\vee\,i\mbox{ is odd and }f(\xi_{i,m})\le f(\xi-\epsilon/2)\}}m\to\frac{\mu_1\{f\le f(\xi-\epsilon/2))\}}{2\pi}+\frac12\\\nonumber
    &\iff\frac{\lfloor m/2\rfloor}{m}+\frac{\#\{i\mbox{ is odd and }\xi_{i,m}\le \xi-\epsilon/2\}}{m}\to\frac{\xi-\epsilon/2}{2\pi}+\frac12\\
    &\iff\frac{\#\{i\mbox{ is odd and }\xi_{i,m}\le \xi-\epsilon/2\}}{m}\to\frac{\xi-\epsilon/2}{2\pi}.
    \label{eq:last}
    \end{align}
    But now
    \begin{align*}
        \frac{\#\{i\mbox{ is odd and }\xi_{i,m}\le \xi-\epsilon/2\}}{m} &\le \frac{\#\{i\mbox{ is odd and }\xi_{i,m}\le  \xi_{i(m),m}   \}}{m}
        \le
        \frac 1m + \frac{i(m)}{2m}
        =
        \frac 1m + \frac{\theta_{i(m),m }}{2\pi}
        \to \frac{\theta}{2\pi}
        \le \frac{\xi - \epsilon}{2\pi}
    \end{align*}
    that contradicts \eqref{eq:last}.
    \medskip

    \noindent{\em Case 2: $\theta_{i(n),n} - \xi_{i(n),n}\ge\epsilon$ i.o.} Analogously to Case 1,
Take a subsequence $\{\xi_{i(m),m}-\theta_{i(m),m}\}_{m}$ of $\{\xi_{i(n),n}-\theta_{i(n),n}\}_n$ such that
\begin{itemize}
    \item $\theta_{i(m),m} - \xi_{i(m),m}\ge\epsilon\,\mbox{ for all }\,m$,
    \item $\xi_{i(m),m}\to\xi, \quad\xi\in[0,\pi], \qquad \xi_{i(m),m} < \xi + \epsilon/2\,\mbox{ for all }\,m$,
    \item $\theta_{i(m),m}\to\theta,\quad\theta\in[0,\pi]$.
\end{itemize}
In particular, we find that $\theta \ge\xi + \epsilon$. By choosing $F=\chi_{(-\infty,f(\xi + \epsilon/2)]}$ in \eqref{...lambda...}, we obtain
\begin{align}\nonumber
&\frac1m\sum_{i=1}^m\chi_{(-\infty,f(\xi+ \epsilon/2)]}((-1)^{i+1}f(\xi_{i,n}))\to\frac1{2\pi}\int_0^\pi \chi_{(-\infty,f(\xi+ \epsilon/2)]}(f(\varphi)){\rm d}\varphi+\frac1{2\pi}\int_0^\pi\chi_{(-\infty,f(\xi+ \epsilon/2)]}(-f(\varphi)){\rm d}\varphi\\\nonumber
&\iff\frac{\#\{i\in\{1,\ldots,m\}:i\mbox{ is even }\,\vee\,i\mbox{ is odd and }f(\xi_{i,m})\le f(\xi+\epsilon/2)\}}m\to\frac{\mu_1\{f\le f(\xi+\epsilon/2))\}}{2\pi}+\frac12\\\nonumber
&\iff\frac{\lfloor m/2\rfloor}{m}+\frac{\#\{i\mbox{ is odd and }\xi_{i,m}\le \xi+\epsilon/2\}}{m}\to\frac{\xi+\epsilon/2}{2\pi}+\frac12\\
&\iff\frac{\#\{i\mbox{ is odd and }\xi_{i,m}\le \xi+\epsilon/2\}}{m}\to\frac{\xi+\epsilon/2}{2\pi}.
\label{eq:last2}
\end{align}
But now
\begin{align*}
\frac{\#\{i\mbox{ is odd and }\xi_{i,m}\le \xi+\epsilon/2\}}{m} &\ge \frac{\#\{i\mbox{ is odd and }\xi_{i,m}\le  \xi_{i(m),m}   \}}{m}
\ge  \frac{i(m)}{2m}
= \frac{\theta_{i(m),m }}{2\pi}
\to \frac{\theta}{2\pi}
\ge  \frac{\xi + \epsilon}{2\pi}
\end{align*}
that contradicts \eqref{eq:last2}.
\qed
\end{proof}

\begin{theorem}\label{th:Carl_generalized}
Let $f:[-\pi,\pi]\to \f R$ be a real even function such that
\begin{itemize}
	\item $f$ is Riemann integrable with connected range,
	\item $f$ has a finite number of local maxima and minima and discontinuity points.
\end{itemize}
 Then Theorem \ref{th:Carl} holds.
\end{theorem}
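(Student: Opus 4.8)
The plan is to sidestep the non‑monotonicity of $f$ by passing to its monotone rearrangement $f^\dag$, to construct the required grid piece by piece over the finitely many intervals of monotonicity of $f$, and only at the very end to install the Cantoni--Butler sign pattern by a purely algebraic reordering. In this way one never needs $f$ itself to be monotone, only that $f^\dag$ be a homeomorphism, and this is precisely what the two hypotheses guarantee.

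First I would set up the pieces and the rearrangement. Since $f$ has finitely many local extrema it is piecewise strictly monotone, so $[0,\pi]$ splits into finitely many intervals $I_1,\dots,I_k$ on whose interiors $f$ is continuous and strictly monotone; write $\ell_j=\mu_1(I_j)/\pi$ and $R_j=\overline{f(\mathrm{int}\,I_j)}$. Because $f$ is not constant on any subinterval, $\mu_1\{f=u\}=0$ for every $u$, and because $\mathrm{Range}(f)$ is connected it is an interval $[m,M]$; hence $Q(u):=\mu_1\{f\le u\}/\pi$ is continuous and strictly increasing on $[m,M]$, so the normalized monotone rearrangement $f^\dag=Q^{-1}\colon[0,1]\to[m,M]$ is a homeomorphism. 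By Theorem~\ref{teoszego-tyr}, $\{T_n(f)\}\sim_\lambda f$, i.e.\ $\tfrac1n\sum_iF(\lambda_i(T_n(f)))\to\int_0^1F(f^\dag(y))\,dy$ for $F\in C_c(\mathbb R)$, and by the classical localization recalled after that theorem every eigenvalue of $T_n(f)$ lies in $(m,M)\subseteq\mathrm{Range}(f)$, hence has an $f$‑preimage in $[0,\pi]$.

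Next, the grid. Let $\lambda_1^\uparrow\le\cdots\le\lambda_n^\uparrow$ be the eigenvalues of $T_n(f)$ and let $g_n$ be the increasing linear spline interpolating them at the equispaced nodes of $[0,1]$; by Theorem~\ref{teoszego-tyr} and Lemma~\ref{lem:rearr_main}, $g_n\to f^\dag$ pointwise, and since $f^\dag$ is continuous and the $g_n$ are monotone, Lemma~\ref{Dini} (Dini) upgrades this to $\max_i|\lambda_i^\uparrow-f^\dag(i/n)|\to0$. To assign eigenvalues to pieces, set $\beta_j(v):=\mu_1\{\theta\in I_j: f(\theta)\le v\}/\pi$ and $\gamma_j:=\beta_j\circ f^\dag$; each $\gamma_j$ is continuous, nondecreasing, $\gamma_j(0)=0$, $\gamma_j(1)=\ell_j$, and $\sum_j\gamma_j(y)=y$. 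A round‑robin rule that assigns index $i$ to the piece currently farthest behind its target $n\gamma_j(i/n)$ makes the empirical distribution of the indices assigned to $I_j$ (rescaled by $n$) converge to $d\gamma_j$; discarding the $o(n)$ indices for which $f^\dag(i/n)$ is within $o(1)$ of a boundary value of some $R_j$, I set $\eta_i$ to be the point of $I_j$ with $f(\eta_i)=\lambda_i^\uparrow$ for the remaining ones (and arbitrarily in $[0,\pi]$ for the discarded ones). Since $d\beta_j$ is exactly the pushforward of $\tfrac1\pi\,\mu_1|_{I_j}$ under $f|_{I_j}$, a change of variables on each piece gives, for every $F\in C_c(\mathbb R)$,
\[
\frac1n\sum_{i=1}^nF(\eta_i)\;\longrightarrow\;\sum_{j=1}^k\int_{R_j}F\bigl((f|_{I_j})^{-1}(v)\bigr)\,d\beta_j(v)\;=\;\frac1\pi\int_0^\pi F(\theta)\,d\theta ,
\]
the discarded indices contributing $o(1)$. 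Thus $\{\eta_i\}_{i=1}^n$ is an a.u.\ grid on $[0,\pi]$ in the sense of Definition~\ref{def:uniform gridding} (compare also Theorem~\ref{th:rearr_main}), and $\{f(\eta_i)\}_i$ is the multiset of eigenvalues of $T_n(f)$.

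Finally, the sign pattern. Since a.u.-ness depends only on the multiset $\{\eta_i\}$, I may permute it. By Theorem~\ref{cbt}(1) applied to the real symmetric Toeplitz matrix $T_n(f)$, for each eigenvalue $v$ the $v$‑eigenspace contains a $d_S(v)$‑dimensional space of symmetric eigenvectors, with $\sum_vd_S(v)=\lceil n/2\rceil$; among the $\eta_i$ with $f(\eta_i)=v$ I mark $d_S(v)$ of them ``symmetric'' and place the symmetric‑marked points (there are $\lceil n/2\rceil$) at the odd positions $\xi_{1,n},\xi_{3,n},\dots$ and the rest at the even positions. Aligning an eigenbasis of $T_n(f)$ with this marking and invoking Theorem~\ref{cbt}(2) (equivalently the Cantoni--Butler Theorem~\ref{th:Cantoni-Butler}) yields $\lambda_i(H_n(f))=(-1)^{i+1}\lambda_i(T_n(f))$ with $\lambda_i(T_n(f))=f(\xi_{i,n})$, while $\{\xi_{i,n}\}$ is still a.u.\ on $[0,\pi]$ — which is the conclusion of Theorem~\ref{th:Carl}. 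The step I expect to be the main obstacle is the grid construction of the third paragraph: making the round‑robin allocation genuinely realize the limiting densities $d\gamma_j$ — in particular controlling the exceptional indices near the finitely many boundary values, where some $\gamma_j$ may fail to be Lipschitz (a Lipschitz regularization of the $\gamma_j$ should absorb this) — and justifying the per‑piece change of variables when $f^\dag$ is merely continuous and $\beta_j$ merely monotone. By contrast, the reduction to $f^\dag$ and the algebraic sign‑pattern step are routine given Theorems~\ref{teoszego-tyr} and~\ref{cbt} and the rearrangement machinery of Subsection~\ref{ssec:aux}.
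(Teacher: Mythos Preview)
Your overall strategy is sound and genuinely different from the paper's. The paper works with $H_n(f)$ and its $2\times2$ symbol $\diag(f,-f)$: it first fixes the Cantoni--Butler ordering, then invokes $\{H_n(f)\}\sim_\lambda\diag(f,-f)$, splits into the cases $\inf f>\delta>0$ versus $\inf f\le0$, and in each case applies the general grid-existence result Theorem~\ref{th:main2} (whose construction is ``take the preimage of $\lambda_i$ closest to the regular grid point $i/d_n$'') together with the auxiliary Lemmas~\ref{lem:splitting}, \ref{lem:au_grids_up_to_o(n)}, \ref{lem:union_of_au_grids_au}. You instead stay with $T_n(f)$ and the scalar symbol $f$, build the grid first, and only impose the sign pattern at the end via Theorem~\ref{cbt}. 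Your route avoids the $\pm f$ bookkeeping and the case split, which is pleasant; the paper's route is more modular and yields the reusable Theorems~\ref{th:main}--\ref{th:main2} as byproducts.

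The genuine gap is exactly where you flag it: the round-robin allocation. As stated it can assign index $i$ to a piece $I_{j^*}$ in which $\lambda_i^\uparrow$ has no preimage. Concretely, if $f^\dag(i/n)>\max R_{j^*}$ then $\gamma_{j^*}(i/n)=\ell_{j^*}$, so piece $j^*$ may still be ``farthest behind'' (it has not yet reached its full quota) even though the current eigenvalue lies strictly above $R_{j^*}$; your rule then selects $j^*$, but no $\eta_i\in I_{j^*}$ with $f(\eta_i)=\lambda_i^\uparrow$ exists. Discarding indices near the finitely many boundary values of the $R_j$ does not cure this, because the failure is not a boundary phenomenon: it is an ordering phenomenon that can occur well inside $R_{j^*}^c$ whenever earlier indices were misallocated. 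Relatedly, for the discarded indices you set $\eta_i$ ``arbitrarily in $[0,\pi]$'', which breaks the identity $f(\eta_i)=\lambda_i^\uparrow$ you need later; you should instead take \emph{some} preimage of $\lambda_i^\uparrow$ (which exists since $(m,M)\subseteq\mathrm{Range}(f)$) and then appeal to Lemma~\ref{lem:au_grids_up_to_o(n)} for the a.u.\ conclusion. Finally, the displayed convergence $\tfrac1n\sum F(\eta_i)\to\tfrac1\pi\int_0^\pi F$ presupposes that each piece $I_j$ indeed receives indices with asymptotic density $d\gamma_j$; that is precisely the unproved property of your allocation.

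The cleanest repair is to replace the round-robin by the paper's closest-preimage construction: having shown (as you do, via Lemma~\ref{lem:rearr_main} and Lemma~\ref{Dini}) that $\max_i|\lambda_i^\uparrow-f^\dag(i/n)|\to0$, set $\xi_i$ to be a point of $f^{-1}(\lambda_i^\uparrow)$ nearest to a fixed regular grid point, and show, using the finitely many intervals of strict monotonicity, that $|\xi_i-\theta_i|$ is uniformly small off an $o(n)$ index set (this is exactly the argument in the proof of Theorem~\ref{th:main2}). With that substitution your scheme goes through, and your final paragraph---permuting the grid so that symmetric-eigenvector indices sit at odd positions, then invoking Theorem~\ref{cbt}---is correct, since a.u.-ness depends only on the multiset.
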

\begin{proof}
By Cantoni-Butler Theorem \ref{th:Cantoni-Butler}, there exists an ordering of the eigenvalues of $T_n(f)$ and $H_n(f)$ such that  $\lambda_i(H_n(f)) = (-1)^{1+1}\lambda_{i}(T_n(f))$. Moreover, we know by \cite{Ferrari2019,Hon2019,Mazza2018} that $\{H_n(f)\}_n \sim_\lambda H(x)$ where $H(x) = \diag(f(x),-f(x))$, so we can rewrite it as
$\{H_n(f)\}_n \sim_\lambda g(x)$, where $g(x) = f(2\pi x)$ for $x\in (0,1/2]$ and $g(1/2+x) = -f(2\pi x)$ for $x\in [0,1/2]$. In the case $\inf_{x\in [0,\pi]} f(x) = 0$, but it is never attained as a minimum, we impose $g(1/2) = 0$.
 Notice that $g$ is still a Riemann integrable function with a finite number of maxima, minima and discontinuity points. Here we distinguished two cases.

    \medskip

\noindent{\em Case 1: $\inf_{x\in [0,\pi]} f(x) > \delta >0$.}
In this case, $\text{Range}(f)$ and $\text{Range}(-f)$ are disjoint intervals with distance at least $2\delta$.
The hypotheses of Lemma \ref{lem:splitting} are thus satisfied with $d_n = n$, $\lambda_i^{(n)} = \lambda_i(H_n(f)))$, $f_1=f$, $f_2=-f$, $k=2$.
$\wt \Lambda_1^{(n)} = \{ \lambda_{2i}^{(n)} \}_{i = 1,\dots,\lfloor n/2\rfloor}$, $\wt \Lambda_2^{(n)} = \{ \lambda_{2i-1}^{(n)} \}_{i = 1,\dots,\lceil n/2\rceil}$. Notice in particular that
$\wt \Lambda_1^{(n)} \cu \text{Range}(f)$ and
$\wt \Lambda_2^{(n)} \cu \text{Range}(-f)$.
 Lemma \ref{lem:splitting} tells us that
 for any $n$ there exists a partition of $\Lambda^{(n)}$ into $2$ subset $\Lambda^{(n)}_1,\Lambda^{(n)}_2$ such that
 \begin{itemize}
    \item   $\Lambda^{(n)}_1$ has cardinality $L^{(n)}_1\coloneqq \lfloor n/2\rfloor$ and $\Lambda^{(n)}_2$ has cardinality $L^{(n)}_2\coloneqq \lceil n/2\rceil$,
    \item for every $j=1,2$
    \begin{equation}
    \{D^{(j)}_n\}_n\coloneqq\left\{\diag\left(\lambda^{(n)}_{i}\right)_{\lambda^{(n)}_{i}\in \Lambda^{(n)}_j}\right\}_n \sim_\lambda (-1)^{j+1} f(x),
    \end{equation}
    \item for any $n$,  $j\in \{1,\dots,k\}$ and $\lambda\in  \Lambda^{(n)}_j$
    \begin{align*}
    \min_{x\in [0,1]} f_j(x) - c_n \le \lambda \le \max_{x\in [0,1]} f_j(x) +  c_n
    \end{align*}
    for some $ c_n\to 0$.
 \end{itemize}
 But now, for any $n$ big enough $c_n<\delta$, so  \[
 \Lambda^{(n)}_1 \cu
 [\min_{x\in [0,1]} f_j(x) - c_n, \max_{x\in [0,1]} f_j(x) +  c_n]
 \cap \left(  \text{Range}(f) \cup \text{Range}(-f)  \right)
 =
  \text{Range}(f),
 \]
 and similarly $\Lambda^{(n)}_2\cu \text{Range}(-f)$, so that $\Lambda_1^{(n)} = \{ \lambda_{2i}^{(n)} \}_{i = 1,\dots,\lfloor n/2\rfloor}$, $\Lambda_2^{(n)} = \{ \lambda_{2i-1}^{(n)} \}_{i = 1,\dots,\lceil n/2\rceil}$. We can now apply Theorem \ref{th:main2} to both $\Lambda^{(n)}_1$ and $\Lambda^{(n)}_2$ to find that there exist two a.u.\ grids $\mathcal G^j_n$ on $[0,\pi]$ of size $L^{(n)}_j$, such that the elements of $\Lambda^{(n)}_j$ are the evaluation of $(-1)^{j+1}f(x)$ on the points of the grid $\mathcal G_n\coloneqq \mathcal G^j_n$ for $j=1,2$. All that is left to prove is that $\cup_{j=1,2} \mathcal G^j_n$ is still an a.u.\ grid on $[0,\pi]$, that is given by Lemma \ref{lem:union_of_au_grids_au}.

    \medskip

\noindent{\em Case 2: $\inf_{x\in [0,\pi]} f(x) \le 0$.}
In this case, the function $g(x)$ has connected range, so
the hypotheses of Theorem \ref{th:main2} are satisfied with $d_n = n$, $\lambda_i^{(n)} = \lambda_i(H_n(f)))$, and the function $g(x)$. As a consequence, there exists an a.u.\ grid $\wt {\mathcal G}_n \coloneqq \{\wt \xi_{i,n}  \}_{i=1,\dots,n}$
on $[0,1]$ such that $g(\wt \xi_{i,n}) = \lambda_{\tau_n(i)}^{(n)}$ for some permutation $\tau_n$. Notice that if $f>0$, then $1/2 \not \in \wt{ \mathcal G}_n$, since the value $0$ can never be attained by any $\lambda_i^{(n)}$.
We can thus define two grids $\wt {\mathcal G}^j_n$ such that
\[
\wt {\mathcal G}^1_n\coloneqq \{ 2\pi \wt\xi_{i,n}
:
\wt\xi_{i,n} < 1/2
\}, \qquad
\wt {\mathcal G}^2_n\coloneqq \{ 2\pi (\wt\xi_{i,n} - 1/2)
:
\wt\xi_{i,n} \ge  1/2
\}
\]
and the relative partition of $\Lambda_n$
\[
\wt {\Lambda}^1_n\coloneqq \{ f(\wt \xi^1_{i,n})
:
\wt \xi^1_{i,n} \in \wt {\mathcal G}^1_n
\}, \qquad
\wt {\Lambda}^2_n\coloneqq \{ -f(\wt \xi^2_{i,n})
:
\wt \xi^2_{i,n} \in \wt {\mathcal G}^2_n
\}
\]
The two grids are now a.u.\ on $[0,\pi]$, with cardinalities $\wt L^{(n)}_j =|\wt {\mathcal G}^j_n|$, but recall that at the start we had a different partition  of $\Lambda_n$ into $\{ \lambda_{2i}^{(n)} \}_{i = 1,\dots,\lfloor n/2\rfloor}\cu \text{Range}(f)$ and $\{ \lambda_{2i-1}^{(n)} \}_{i = 1,\dots,\lceil n/2\rceil}\cu \text{Range}(-f)$ of cardinality respectively $L^{(n)}_1\coloneqq \lfloor n/2\rfloor$ and  $L^{(n)}_2\coloneqq \lceil n/2\rceil$.
Since $\mathcal G_n$ is an a.u.\ grid, one can find that
\[
e_n\coloneqq \left|  |\wt L^{(n)}_1| - |L^{(n)}_1|\right|
=
\left|  |\wt L^{(n)}_2| - |L^{(n)}_2|\right|
= o(n)
\]
where without loss of generality, we can assume $|\wt L^{(n)}_1| \ge  |L^{(n)}_1|$. This means that we can find $e_n=o(n)$ elements in $\wt {\Lambda}^1_n\cap \{ \lambda_{2i-1}^{(n)} \}_{i = 1,\dots,\lceil n/2\rceil}$ that we can move from $\wt {\Lambda}^1_n$ to $\wt {\Lambda}^2_n$ by moving the corresponding points $\wt  \xi^1_{i,n}$ from $\wt {\mathcal G}^1_n$ to $\wt {\mathcal G}^2_n$. We thus generate two new grids ${\mathcal G}^1_n$ to ${\mathcal G}^2_n$ that are still a.u.\ on $[0,\pi]$ due to Lemma \ref{lem:au_grids_up_to_o(n)} and such that the generated partitions ${\Lambda}^1_n$ and ${\Lambda}^2_n$ satisfy
$
|{\Lambda}^j_n| =  L^{(n)}_j
$
and
${\Lambda}^j_n \cu  \text{Range}((-1)^{j+1}f)$.
The union $\cup_{j=1,2} \mathcal G^j_n$ is still a.u.\ on $[0,\pi]$ due to Lemma \ref{lem:union_of_au_grids_au}, thus concluding the proof.

\qed
\end{proof}

\begin{theorem}\label{th:Stefano_version}
Let $f:[-\pi,\pi]\to \f R$ be a real even Riemann integrable function with connected range. Then, for every $n\in \f N$
and for every $\{\xi_{1,n}, \xi_{2,n}, \dots, \xi_{n,n}\}_n$ a.u. grid on $[0,\pi]$,
there exist real values $\psi_{1,n}, \psi_{2,n},\dots,\psi_{n,n}$ with the following properties.
\begin{enumerate}
\item The eigenvalues of $T_n(f)$ and $H_n(f)$ are given by
\begin{align*}
\lambda_i(T_n(f)) &= f(\xi_{i,n}) + \psi_{i,n},\\
\lambda_i(H_n(f)) &= (-1)^{i+1} \lambda_i(T_n(f)),
\end{align*}
for all $i=1,\dots,n$.
\item $\max_{i=1,\dots, n}|\psi_{i,n}|\to 0$ as $n\to \infty$.
\end{enumerate}
\end{theorem}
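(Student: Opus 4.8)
The plan is to combine the Cantoni--Butler eigenstructure of Theorems \ref{th:Cantoni-Butler} and \ref{cbt} with a \emph{uniform} version of the monotone rearrangement results. First I set up the symmetric/skew splitting: since $T_n(f)$ is symmetric and centrosymmetric it leaves invariant the $(+1)$- and $(-1)$-eigenspaces of $Y_n$, so $T_n(f)=(T_n(f)|_{\mathrm{sym}})\oplus(T_n(f)|_{\mathrm{skew}})$, and a direct check (for $v$ symmetric, $T_n(f)v$ is symmetric and $H_n(f)v=Y_nT_n(f)v=T_n(f)v$; for $v$ skew, $H_n(f)v=-T_n(f)v$) gives $H_n(f)=(T_n(f)|_{\mathrm{sym}})\oplus(-T_n(f)|_{\mathrm{skew}})$, the two blocks having sizes $\lceil n/2\rceil$ and $\lfloor n/2\rfloor$. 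I would then use that $\{T_n(f)|_{\mathrm{sym}}\}\sim_\lambda(f,[0,\pi])$ and $\{T_n(f)|_{\mathrm{skew}}\}\sim_\lambda(f,[0,\pi])$; this is the refined form of the distribution results of \cite{Ferrari2019,Hon2019,Mazza2018}, and can also be seen by writing each block (in an orthonormal basis of the corresponding subspace) as a Toeplitz matrix generated by $f$ plus a Hankel correction which, $f$ being bounded hence in $L^2$, has $\frac1m\|\cdot\|_F^2\to0$, so it is distributed as the zero function in the singular value sense and does not perturb the eigenvalue distribution.

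Second, I would record that the monotone rearrangement $f^\dag$ of $f|_{[0,\pi]}$ is continuous on $[0,1]$: a non-decreasing function is continuous iff its range is an interval, and connectedness of the range of $f$ forces this for $f^\dag$. The heart of the proof is then the following uniformity lemma: if $\{A_m\}\sim_\lambda(\phi,K)$ with $\phi^\dag$ continuous and $d_m=\operatorname{size}(A_m)\to\infty$, then the non-decreasing step function $g_m$ on $[0,1]$ equal to the $i$-th smallest eigenvalue of $A_m$ on the $i$-th subinterval converges \emph{uniformly} to $\phi^\dag$. Indeed $\frac1{d_m}\sum_{\ell=0}^{d_m}F\bigl(g_m(\ell/d_m)\bigr)\to\frac1{\mu(K)}\int_KF(\phi)=\int_0^1F(\phi^\dag)$ for every $F\in C_c(\mathbb R)$, so Lemma \ref{lem:rearr_main} gives $g_m\to\phi^\dag$ at every continuity point of $\phi^\dag$, i.e.\ everywhere, and Dini's second theorem (Lemma \ref{Dini}) upgrades this to uniform convergence; by Theorem \ref{th:rearr_main} together with Lemma \ref{Dini} the same holds for the linear spline through the sorted samples $f(\zeta_{i,m})$ along any a.u.\ grid $\{\zeta_{i,m}\}$ on $[0,\pi]$. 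Consequently, sorting both the eigenvalues of $A_m$ and the samples $f(\zeta_{i,m})$ increasingly and pairing them index by index, the differences go to $0$ uniformly.

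Now I would apply this twice. From the given a.u.\ grid $\{\xi_{i,n}\}$ on $[0,\pi]$ I extract the two sub-grids $\{\xi_{2i-1,n}\}_{i\le\lceil n/2\rceil}$ and $\{\xi_{2i,n}\}_{i\le\lfloor n/2\rfloor}$; from $\max_j|\xi_{j,n}-j\pi/n|\to0$ and $|n-2\lceil n/2\rceil|\le1$ one checks $|\xi_{2i-1,n}-i\pi/\lceil n/2\rceil|\to0$ uniformly, so each sub-grid is a.u.\ on $[0,\pi]$ with the correct cardinality. Pairing the sorted eigenvalues of $T_n(f)|_{\mathrm{sym}}$ with the sorted samples $\{f(\xi_{2i-1,n})\}$, and likewise the sorted eigenvalues of $T_n(f)|_{\mathrm{skew}}$ with $\{f(\xi_{2i,n})\}$, and transporting these pairings back to the odd- and even-indexed grid points, I obtain an ordering $\lambda_1(T_n(f)),\dots,\lambda_n(T_n(f))$ in which every odd-indexed eigenvalue carries a symmetric eigenvector and every even-indexed one a skew-symmetric eigenvector, and for which $|\lambda_i(T_n(f))-f(\xi_{i,n})|$ is at most the maximum of the two uniform errors from the lemma, hence tends to $0$. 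Setting $\psi_{i,n}:=\lambda_i(T_n(f))-f(\xi_{i,n})$ gives property (2). Since this ordering alternates symmetric/skew eigenvectors starting from a symmetric one and the counts $\lceil n/2\rceil,\lfloor n/2\rfloor$ match those in Theorem \ref{cbt}(1), Theorem \ref{cbt}(2) applies and yields $\lambda_i(H_n(f))=(-1)^{i+1}\lambda_i(T_n(f))$, i.e.\ property (1).

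The main obstacle is the passage from \emph{almost everywhere} to \emph{uniform} control of the sorted eigenvalues: without continuity of $f^\dag$ both indexed sequences in the lemma would be forced to jump and $\max_i|\psi_{i,n}|\to0$ would fail, which is precisely why the connected-range hypothesis (funnelled through Dini's theorem) is indispensable. The second delicate point is the bookkeeping that makes the constructed ordering simultaneously grid-adapted and Cantoni--Butler admissible: this is what compels one to split into the symmetric and skew blocks and to use that \emph{each} of them is distributed as $f$ on $[0,\pi]$, rather than handling the whole spectrum of $T_n(f)$ at once.
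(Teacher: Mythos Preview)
Your proof is correct and takes a genuinely different route from the paper's. The paper applies the abstract splitting machinery (Lemma~\ref{lem:splitting} via Theorem~\ref{th:main}) to the full spectrum of $H_n(f)$, feeding in the odd/even partition as input and getting out a \emph{new} partition $\widetilde\Lambda_j^{(n)}$ on which the uniform approximation holds; it then pairs this with the odd/even sub-grids. You instead stay on the $T_n(f)$ side, use the Cantoni--Butler decomposition $T_n(f)=T_n(f)|_{\mathrm{sym}}\oplus T_n(f)|_{\mathrm{skew}}$, show directly that each block is $\sim_\lambda (f,[0,\pi])$ via the ``$T_m(f)$ plus zero-distributed Hankel'' argument, and then run what is essentially Theorem~\ref{th:spectral_bounded_means_uniform} on each block separately. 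Your route is more transparent about why the final ordering is simultaneously grid-adapted \emph{and} alternating symmetric/skew, which is exactly what is needed to invoke Theorem~\ref{cbt}(2); in the paper's argument this point is handled only implicitly through the cardinality constraints in Theorem~\ref{th:main}. Conversely, the paper's approach is more modular: it black-boxes the splitting step and would apply verbatim to other matrix sequences with a two-branch symbol, without needing to know anything about the internal block structure.

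One small gap to close: when you invoke Dini (Lemma~\ref{Dini}) to upgrade the pointwise convergence from Lemma~\ref{lem:rearr_main} to uniform convergence on $[0,1]$, you need convergence at the endpoints $y=0,1$ as well, and Lemma~\ref{lem:rearr_main} only delivers the open interval $(0,1)$. The missing ingredient is that the eigenvalues of each block lie in $[\inf f,\sup f]$, which holds because each block is the compression of the Hermitian matrix $T_n(f)$ to an invariant subspace (so min--max applies). With that bound in hand, the endpoint argument is exactly the one in the proof of Theorem~\ref{th:spectral_bounded_means_uniform}, equation~\eqref{eq:extrema_conv}. You should state this explicitly. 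Also, your remark that the block distributions are ``the refined form of the distribution results of \cite{Ferrari2019,Hon2019,Mazza2018}'' overstates what those references give (they yield the joint distribution $\diag(f,-f)$, not each block separately); your own Toeplitz-plus-Hankel argument is the actual justification, so lean on that.
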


\begin{proof}

By Cantoni-Butler Theorem \ref{th:Cantoni-Butler}, there exists an ordering of the eigenvalues of $T_n(f)$ and $H_n(f)$ such that  $\lambda_i(H_n(f)) = (-1)^{1+1}\lambda_{i}(T_n(f))$. Moreover, we know by \cite{Ferrari2019,Hon2019,Mazza2018} that $\{H_n(f)\}_n \sim_\lambda H(x)$ where $H(x) = \diag(f(x),-f(x))$. We can then directly apply Theorem \ref{th:main} with $d_n = n$, $\lambda_i^{(n)} = \lambda_i(H_n(f)))$, $f_1=f$, $f_2=-f$, $k=2$,
$\Lambda_1^{(n)} = \{ \lambda_{2i}^{(n)} \}_{i = 1,\dots,\lfloor n/2\rfloor}$, $\Lambda_2^{(n)} = \{ \lambda_{2i-1}^{(n)} \}_{i = 1,\dots,\lceil n/2\rceil}$.
The theorem tells us that there exists a partition of $\Lambda^{(n)}$ into two subsets $\wt \Lambda_j^{(n)}$ with the same cardinality of $\Lambda_j^{(n)}$ and such that for every couple of a.u.\ grid on $[0,\pi]$
$\mathcal G^j_n=\{\xi^j_{i,n}\}_{i=1,\dots,L^{(n)}_j}$ in $[0,1]$ with cardinality $|\Lambda_j^{(n)}| = L^{(n)}_j$, there exists an ordering of the elements of $\wt \Lambda_j^{(n)}$ such that
\[
\max_{\lambda^{(n)}_{i,n} \in \wt \Lambda_j^{(n)}} \left| f_j(\xi_{i,n}^j) - \lambda^{(n)}_{i,n}  \right|
\to 0.
\]
Given now a fixed a.u.\ grid $\mathcal G_n=\{\xi_{i,n}\}_{i=1,\dots,n}$ on $[0,\pi]$, the two subgrids  $\mathcal G^1_n = \{ \xi_{2i,n} \}_{i = 1,\dots,\lfloor n/2\rfloor}$, $\mathcal G^2_n = \{ \xi_{2i-1,n} \}_{i = 1,\dots,\lceil n/2\rceil}$ have exactly cardinality $L^{(n)}_j$ and they are both a.u.\ grids on $[0,\pi]$, so the result follows.

\qed
\end{proof}

\subsection{More General Results}\label{ssec: general}
The following is a generalization of \cite[Theorem 1.5]{Al} and  \cite[Theorem 1.3]{Al2}, but the proof is almost identical. Note that in the latest article the hypothesis ``$f$ has connected and bounded essential range" must be replaced with ``$f$ has connected and bounded range" otherwise the result is false.

\begin{theorem}\label{th:spectral_bounded_means_uniform}
Let $\lambda^{(n)}_1, \lambda^{(n)}_2, \dots, \lambda^{(n)}_{d_n}$ a sequence of $d_n\to\infty$ real values for any $n\in\f N$, and $D_n\coloneqq\diag(\lambda^{(n)}_i)_{i=1,\dots,d_n}$. Given a Riemann integrable function $f:[0,1]\to \f R$ with connected range,  suppose that $\{D_n\}_n\sim_\lambda f(x)$ and that for any $n$ and any $i\in \{1,\dots,d_n\}$
\[
\min_{x\in [0,1]} f(x) - c_n \le \lambda^{(n)}_i \le \max_{x\in [0,1]} f(x) + c_n,
\]
where $c_n\to 0$. In this case,
for any a.u.\ grid $\mathcal G_n=\{\xi_{i,n}\}_{i=1,\ldots,d_n}$ in $[0,1]$
there exists a permutation $\tau_n\in S^{d_n}$ such that
\[
\max_{i=1,\dots,d_n} \left| f(\xi_{i,n}) - \lambda^{(n)}_{\tau_n(i)}  \right|
\to 0.
\]
\end{theorem}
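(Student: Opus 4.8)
The plan is to reduce the statement to the monotone-rearrangement machinery of Theorem~\ref{th:rearr_main} and Lemma~\ref{lem:rearr_main}, which together say that the sorted eigenvalue samples of $\{D_n\}_n$ reconstruct the monotone rearrangement $f^\dag$ of $f$, and that the sorted evaluations $f(\xi_{i,n})$ on an a.u.\ grid do the same. First I would order the $\lambda^{(n)}_i$ in non-decreasing order and record them as a vector; since $\{D_n\}_n\sim_\lambda f$, Lemma~\ref{lem:rearr_main} applied to the non-decreasing step (or linear spline) function interpolating these sorted values yields pointwise convergence to $f^\dag$ at every continuity point of $f^\dag$. Because $f$ has connected range, $f^\dag$ is continuous on $(0,1)$; and since $f$ is Riemann integrable (hence bounded), $f^\dag$ extends continuously to $\{0,1\}$ with $f^\dag(0)=\min f$, $f^\dag(1)=\max f$. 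The two-sided localization hypothesis $\min f - c_n \le \lambda^{(n)}_i \le \max f + c_n$ with $c_n\to 0$ is exactly what forces the spline endpoints to converge to $f^\dag(0)$ and $f^\dag(1)$ as well; this is the ingredient that makes \emph{uniform} (rather than merely a.e.)\ convergence possible at the boundary. By Dini's theorem (Lemma~\ref{Dini})—monotone functions converging pointwise to a continuous function on a compact interval converge uniformly—I get $\sup_{y\in[0,1]} |g_n(y) - f^\dag(y)| \to 0$, where $g_n$ is the sorted-eigenvalue spline.

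Next I would do the same on the grid side: given the a.u.\ grid $\mathcal G_n$, the evaluations $f(\xi_{i,n})$ are a uniform-type sampling of $f$, so by Theorem~\ref{th:rearr_main} (with $\Omega=[0,1]$, $d=1$, the a.u.\ grid as hypothesized) the linear spline $f^\dag_n$ interpolating the sorted values $f(\xi_{i,n})$ converges pointwise to $f^\dag$ at every continuity point; again by Dini this convergence is uniform on $[0,1]$. Now I define $\tau_n$ to be the permutation that, after sorting both sequences, matches the $i$-th smallest $f(\xi_{i,n})$ with the $i$-th smallest $\lambda^{(n)}$. Concretely: let $\sigma_n$ sort the grid values, let $\rho_n$ sort the eigenvalues, and set $\tau_n = \rho_n^{-1}\circ\sigma_n$. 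Then $|f(\xi_{i,n}) - \lambda^{(n)}_{\tau_n(i)}|$ equals the gap between the two spline functions at the node $i/d_n$ (up to the interpolation, which is exact at nodes), hence is bounded by $\sup |f^\dag_n - f^\dag| + \sup |f^\dag - g_n| \to 0$ uniformly in $i$. This gives the claimed $\max_i |f(\xi_{i,n}) - \lambda^{(n)}_{\tau_n(i)}| \to 0$.

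The main obstacle I expect is the behavior at the endpoints $y=0$ and $y=1$. The rearrangement results (Theorem~\ref{th:rearr_main}, Lemma~\ref{lem:rearr_main}) only guarantee convergence at continuity points of $f^\dag$, and a priori $f^\dag$ could be discontinuous at $0$ or $1$—indeed without the localization hypothesis the smallest eigenvalue of $D_n$ could drift below $\min f$ and the conclusion would fail (this is precisely the counterexample the remark before the theorem alludes to). So the crux is to argue carefully that the connected-range assumption makes $f^\dag$ continuous on the open interval, that Riemann integrability (boundedness) plus the $c_n$-localization pins the endpoint values, and hence that Dini upgrades a.e.\ convergence to uniform convergence on the \emph{closed} interval $[0,1]$; the grid side needs the analogous endpoint control, which follows because $f(\xi_{1,n})$ and $f(\xi_{d_n,n})$ are genuine values of $f$ and the a.u.\ grid gets arbitrarily close to both ends of $[0,1]$, where $f$ (being Riemann integrable, hence continuous a.e.) has essential infimum and supremum equal to its min and max on a connected range. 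Once the uniform convergence of both splines to the common limit $f^\dag$ is established, the matching permutation and the triangle inequality finish the argument routinely.
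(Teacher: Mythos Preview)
Your proposal is correct and follows essentially the same approach as the paper's proof: both arguments pass to the monotone rearrangement $f^\dagger$, use Theorem~\ref{th:rearr_main} for the sorted grid samples and Lemma~\ref{lem:rearr_main} for the sorted eigenvalues, obtain pointwise convergence on $(0,1)$ from continuity of $f^\dagger$ (which comes from the connected-range hypothesis), then use the localization bound $\min f - c_n \le \lambda_i^{(n)} \le \max f + c_n$ together with $\mathrm{Range}(f_n^\dagger)\subseteq\mathrm{Range}(f^\dagger)$ to secure convergence at the endpoints, upgrade to uniform convergence via Dini (Lemma~\ref{Dini}), and finish by matching the two sorted sequences through a permutation and the triangle inequality. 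Your identification of the endpoint behavior as the crux, and of the localization hypothesis as the ingredient that repairs it, matches the paper's reasoning exactly.
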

\begin{proof}
Fix an a.u.\ grid $\mathcal G_n=\{\xi_{i,n}\}_{i=1,\ldots,d_n}$ in $[0,1]$. If we add the point $0$ to $\mathcal G_n$ for every $n$, it still is an a.u.\ grid, and since $f^\dagger$ is continuous on $[0,1]$, by Theorem \ref{th:rearr_main} $f_n^\dagger\to f^\dagger$ on $(0,1)$, but it is also possible to prove the convergence at the extrema, since $\text{Range}(f_n^\dagger)\cu \text{Range}(f^\dagger)$, so for any $\ve>0$,
\begin{equation}\label{eq:extrema_conv}
f_n^\dagger(\epsilon) \ge f_n^\dagger(0) = \lambda^{(n)}_1\ge f^\dagger(0)-c_n \implies
f^\dagger(\epsilon)\ge \limsup_{n\to \infty} f_n^\dagger(0) \ge \liminf_{n\to \infty} f_n^\dagger(0) \ge f^\dagger(0).
\end{equation}
Since $f^\dagger$ is continuous, then $f_n^\dagger(0)\to f^\dagger(0)$. The same reasoning proves that $f_n^\dagger(1)\to f^\dagger(1)$, thus concluding that $f_n^\dagger\to f^\dagger$ on $[0,1]$. By Lemma \ref{Dini}, we conclude that $f_n^\dagger \to f^\dagger$ uniformly in $n$ and in particular
there exists a permutation $\tau_n^{-1}\in S^{d_n}$ such that
\begin{equation}\label{eq:1}
\max_{i=1,\dots,d_n} \left| f(\xi_{\tau_n^{-1}(i),n}) - f^\dagger(i/d_n)  \right| \to 0.
\end{equation}
Since $f^\dagger$ is a rearranged version of $f$, by hypothesis $\{D_n\}_n\sim_\lambda f^\dagger$. Suppose now without loss of generality that $\lambda^{(n)}_1\le \lambda^{(n)}_2\le \dots\le  \lambda^{(n)}_{d_n}$ and let $g_n:[0,1]\to \f R$ be the linear spline function that interpolates $(\lambda^{(n)}_1,\lambda^{(n)}_1, \lambda^{(n)}_2, \dots, \lambda^{(n)}_{d_n})$ (notice that only the first value is repeated two times) over the equally spaced nodes $(0,\frac{1}{d_n},\frac{2}{d_n},\ldots,1)$ in $[0,1]$. The hypothesis of Lemma \ref{lem:rearr_main} is now satisfied with $g=f^\dagger$ due to the ergodic formula associated to $\{D_n\}_n\sim_\lambda f^\dagger$, so $g_n(x)\to f^\dagger(x)$ for every $x\in (0,1)$ because $f^\dagger$ is continuous. Repeating the same reasoning as in \eqref{eq:extrema_conv} but with $g_n$ instead of $f_n^\dagger$, we conclude that $g_n \to f^\dagger$ uniformly in $n$ and in particular
\begin{equation}\label{eq:2}
\max_{i=1,\dots,d_n} \left| \lambda^{(n)}_i - f^\dagger(i/d_n)  \right| \to 0.
\end{equation}
The equation \eqref{eq:1} and \eqref{eq:2} let us conclude that
\begin{equation*}
\max_{i=1,\dots,d_n} \left| f(\xi_{\tau_n^{-1}(i),n}) - \lambda^{(n)}_i  \right| \to 0
\end{equation*}
that proves the theorem for the a.u.\ grid $\mathcal G_n=\{\xi_{i,n}\}_{i=1,\ldots,d_n}$.

\qed
\end{proof}
\begin{theorem}\label{th:main}
Let $\Lambda^{(n)} \coloneqq \{\lambda^{(n)}_1, \lambda^{(n)}_2, \dots, \lambda_{d_n}^{(n)} \}$ be a sequence of $d_n$ real values for any $n\in\f N$ with $d_n\to\infty$, and $D_n\coloneqq\diag(\lambda^{(n)}_i)_{i=1,\dots,d_n}$.
Given a diagonal  $k\times k$ matrix-valued function $H(x) \coloneqq \diag(f_j(x))_{j=1,\dots,k}$, where $f_j:[0,1]\to \f R$ are Riemann integrable with connected range, suppose that $\{D_n\}_n\sim_\lambda H(x)$ and that for any $n$ there exists a partition of $\Lambda^{(n)}$ into $k$ subset $\Lambda^{(n)}_1,\dots,\Lambda^{(n)}_k$ such that for every $j=1,\dots,k$
\begin{itemize}
\item $L^{(n)}_j/d_n\to 1/k$, where $L^{(n)}_j$ is
the cardinality  of $\Lambda^{(n)}_j$ ,
\item $\min_{x\in [0,1]} f_j(x) - c_n \le \lambda \le \max_{x\in [0,1]} f_j(x) + c_n$ for each $\lambda\in \Lambda^{(n)}_j$, where $c_n\to 0$.
\end{itemize}
Then for every $n$ there exists a partition of $\Lambda^{(n)}$ into $k$ subset $\wt \Lambda^{(n)}_1,\dots,\wt \Lambda^{(n)}_k$ such that for every $j=1,\dots,k$
\begin{itemize}
\item $\wt \Lambda^{(n)}_j$ has cardinality $L^{(n)}_j$,
\item there exists an ordering of the elements $\lambda^{(n,j)}_{i}$ of $\wt \Lambda^{(n)}_j$ such that for any a.u.\ grid $\mathcal G^j_n=\{\xi^j_{i,n}\}_{i=1,\dots,L^{(n)}_j}$ in $[0,1]$ with cardinality $|\mathcal G^j_n| = L^{(n)}_j$, we have
\[
\max_{i=1,\dots,L^{(n)}_j} \left| f_j(\xi_{i,n}^j) - \lambda^{(n,j)}_{i}  \right|
\to 0.
\]
\end{itemize}

\end{theorem}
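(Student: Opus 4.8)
\emph{Proof plan.} At bottom the statement is just a repackaging of the quantitative (second) half of Lemma~\ref{lem:splitting} followed by a piecewise application of Theorem~\ref{th:spectral_bounded_means_uniform}, so the plan is to chain these two results and keep careful track of the bookkeeping.

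First I would invoke the second part of Lemma~\ref{lem:splitting}. Its hypotheses hold verbatim: $\{D_n\}_n\sim_\lambda H(x)$ is assumed, the $f_j$ are Riemann integrable with connected range, and the partition $\Lambda^{(n)}_1,\dots,\Lambda^{(n)}_k$ of $\Lambda^{(n)}$ furnished in the hypothesis of the present theorem — with $|\Lambda^{(n)}_j|=L^{(n)}_j$, $L^{(n)}_j/d_n\to1/k$, $\sum_j L^{(n)}_j=d_n$ (automatic, being a partition of a set of $d_n$ elements), and $\min_{x\in[0,1]}f_j(x)-c_n\le\lambda\le\max_{x\in[0,1]}f_j(x)+c_n$ for every $\lambda\in\Lambda^{(n)}_j$ — is exactly the auxiliary partition called $\wt\Lambda^{(n)}_j$ in the lemma. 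The lemma then produces a partition $\wt\Lambda^{(n)}_1,\dots,\wt\Lambda^{(n)}_k$ of $\Lambda^{(n)}$ (note that the tilde/no-tilde convention is swapped relative to the lemma) with $|\wt\Lambda^{(n)}_j|=L^{(n)}_j$ such that the diagonal subsequences $\{D^{(j)}_n\}_n\coloneqq\{\diag(\lambda^{(n)}_i)_{\lambda^{(n)}_i\in\wt\Lambda^{(n)}_j}\}_n$ satisfy $\{D^{(j)}_n\}_n\sim_\lambda f_j(x)$ and moreover every $\lambda\in\wt\Lambda^{(n)}_j$ obeys $\min_{x\in[0,1]}f_j(x)-\wt c_n\le\lambda\le\max_{x\in[0,1]}f_j(x)+\wt c_n$ for some $\wt c_n\to0$. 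This $\wt\Lambda^{(n)}_j$ is the partition claimed in the theorem, and it does not depend on any grid.

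Next I would fix $j\in\{1,\dots,k\}$ and an a.u.\ grid $\mathcal G^j_n=\{\xi^j_{i,n}\}_{i=1,\dots,L^{(n)}_j}$ on $[0,1]$ (note $L^{(n)}_j\to\infty$, since $L^{(n)}_j/d_n\to1/k>0$), and apply Theorem~\ref{th:spectral_bounded_means_uniform} to $\{D^{(j)}_n\}_n$: the hypotheses needed there are precisely the two facts just obtained, namely $\{D^{(j)}_n\}_n\sim_\lambda f_j$ with $f_j$ Riemann integrable of connected range, together with the two-sided bound on the diagonal entries with $\wt c_n\to0$. Writing $\lambda^{(n,j)}_1,\dots,\lambda^{(n,j)}_{L^{(n)}_j}$ for an arbitrary fixed enumeration of $\wt\Lambda^{(n)}_j$, the theorem yields a permutation $\tau^j_n$ of $\{1,\dots,L^{(n)}_j\}$ with $\max_i|f_j(\xi^j_{i,n})-\lambda^{(n,j)}_{\tau^j_n(i)}|\to0$; re-indexing, i.e.\ declaring the $i$-th element of the ordering of $\wt\Lambda^{(n)}_j$ to be $\lambda^{(n,j)}_{\tau^j_n(i)}$, gives $\max_i|f_j(\xi^j_{i,n})-\lambda^{(n,j)}_i|\to0$, which is the conclusion.

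The proof carries no genuine analytic difficulty — that content already sits inside Lemma~\ref{lem:splitting} (which rests on the GLT machinery of Theorems~\ref{th:GLT_up_to_permutation} and~\ref{th:su} and on Lemma~\ref{lem:graph}) and inside Theorem~\ref{th:spectral_bounded_means_uniform} (monotone rearrangements plus Dini). The points that need care are: recognizing that the hypothesis bound $\min f_j-c_n\le\lambda\le\max f_j+c_n$ is exactly what triggers the \emph{quantitative} half of Lemma~\ref{lem:splitting} (the qualitative first half alone would not localize the eigenvalues tightly enough to feed Theorem~\ref{th:spectral_bounded_means_uniform}); keeping the tilde conventions straight between the two statements; and noting that the final ordering $\lambda^{(n,j)}_i$ legitimately depends on the chosen grid sequence $\mathcal G^j_n$, whereas the partition $\wt\Lambda^{(n)}_j$ does not. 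This grid-dependence of the ordering is the sense in which the statement is to be read, and the sense in which it is used downstream (e.g.\ in Theorem~\ref{th:Stefano_version}).
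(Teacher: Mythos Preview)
Your proposal is correct and follows essentially the same route as the paper: invoke the quantitative second half of Lemma~\ref{lem:splitting} to obtain the partition $\wt\Lambda^{(n)}_j$ with $\{D^{(j)}_n\}_n\sim_\lambda f_j$ and the localization bounds, then apply Theorem~\ref{th:spectral_bounded_means_uniform} piecewise. Your treatment is in fact more careful than the paper's on two points --- the swapped tilde convention between lemma and theorem, and the observation that the partition is grid-independent while the ordering is grid-dependent --- both of which the paper glosses over.
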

\begin{proof}
By hypothesis, we can apply directly Lemma \ref{lem:splitting} and deduce that for any $n$ there exists a partition of $\Lambda^{(n)}$ into $k$ subset $\wt \Lambda^{(n)}_1,\dots,\wt \Lambda^{(n)}_k$ such that for every $j=1,\dots,k$
\begin{itemize}
    \item  $\wt \Lambda^{(n)}_j$ has cardinality $L^{(n)}_j$,
    \item$
    \{D^{(j)}_n\}_n\coloneqq\left\{\diag\left(\lambda^{(n)}_{i}\right)_{\lambda^{(n)}_{i}\in \wt \Lambda^{(n)}_j}\right\}_n \sim_\lambda f_j(x).
    $
    \item $\min_{x\in [0,1]} f_j(x) -  c_n \le \lambda \le \max_{x\in [0,1]} f_j(x) +  c_n$
    for all  $\lambda\in  \Lambda^{(n)}_j$ and some $c_n\to 0$.
\end{itemize}
Applying now Theorem \ref{th:spectral_bounded_means_uniform} to all sequences $\{D^{(j)}_n\}_n$ we find that up to a permutation of the elements $\lambda_i^{(n,j)}$ inside each $\wt \Lambda^{(n)}_j$, for any a.u.\ grid $\mathcal G^j_n=\{\xi_{i,n}\}_{i=1,\ldots,L^{(n)}_j}$ in $[0,1]$
and all $j=1,\dots,k$,
\[
\max_{i=1,\dots,L^{(n)}_j} \left| f_j(\xi_{i,n}) - \lambda_i^{(n,j)}  \right|
\to 0.
\]
thus proving the result.

\qed
\end{proof}

\begin{theorem}\label{th:main2}
Let $\Lambda^{(n)} \coloneqq \{\lambda^{(n)}_1, \lambda^{(n)}_2, \dots, \lambda_{d_n}^{(n)} \}$ be a sequence of $d_n$ real values for any $n\in\f N$ with $d_n\to\infty$, and $D_n\coloneqq\diag(\lambda^{(n)}_i)_{i=1,\dots,d_n}$. Let $f:[0,1]\to \f R$ be a Riemann integrable function such that
\begin{itemize}
\item $f$ has a finite number of local maxima and minima and discontinuity points,
\item $f$ has connected range,
\item $\{D_n\}_n\sim_\lambda f(x)$,
\item $\Lambda^{(n)}\cu \text{Range}(f)$.
\end{itemize}
 Then there exists an a.u.\ grid $\{\xi_{1,n}, \xi_{2,n}, \dots, \xi_{d_n,n}\}_n$ and a permutation $\tau_n$ such that  for every $i=1,\dots,d_n$
$$\lambda^{(n)}_{\tau_n(i)}= f(\xi_{i,n}).$$
\end{theorem}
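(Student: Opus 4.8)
The plan is to pass, via the monotone rearrangement of $f$, from $f$ to a function that is strictly monotone on each of finitely many pieces, to solve the problem on each piece (where it becomes essentially canonical), and to glue the pieces back together with the $o(d_n)$‑tolerance lemmas of Subsection~\ref{ssec:aux}.

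\emph{Reduction to monotone pieces.} Since $f$ has only finitely many local extrema and discontinuity points, pick $0=t_0<t_1<\dots<t_p=1$ containing all of them, so that $f$ is continuous and strictly monotone on each open interval $I_\ell=(t_{\ell-1},t_\ell)$; set $m_\ell\coloneqq t_\ell-t_{\ell-1}$, so $\sum_\ell m_\ell=1$. Finitely many local extrema also forbids any nontrivial constant stretch of $f$, so every level set $\{f=u\}$ is Lebesgue‑null; hence $u\mapsto\mu_1\{f\le u\}$ is continuous and strictly increasing on $[\inf f,\sup f]$, and therefore its inverse $f^\dag:[0,1]\to[\inf f,\sup f]$ is a continuous strictly increasing bijection. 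As $f^\dag$ is equimeasurable with $f$, we also have $\{D_n\}_n\sim_\lambda f^\dag$.

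\emph{The measure‑preserving rearrangement and the reduction.} Put $\rho\coloneqq(f^\dag)^{-1}\circ f:[0,1]\to[0,1]$, so that $f=f^\dag\circ\rho$; from $\{\rho\le y\}=\{f\le f^\dag(y)\}$ one gets $\mu_1\{\rho\le y\}=y$, i.e.\ $\rho$ is measure preserving, and $\rho|_{I_\ell}=(f^\dag)^{-1}\circ f|_{I_\ell}$ is a strictly monotone homeomorphism of $\ol{I_\ell}$ onto $\ol{J_\ell}\coloneqq\ol{\rho(I_\ell)}$ (using the one‑sided limits of $f$ at $t_{\ell-1},t_\ell$ at the endpoints). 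Since $\text{Range}(f)$ is connected, $\overline{\text{Range}(f)}=[\inf f,\sup f]$ and hence $\bigcup_\ell\ol{J_\ell}=[0,1]$. Writing $\phi_\ell(v)\coloneqq\mu_1\{x\in I_\ell:\rho(x)\le v\}$, the $\phi_\ell$ are continuous non‑decreasing with $\sum_\ell\phi_\ell(v)=v$, $\phi_\ell(0)=0$, $\phi_\ell(1)=m_\ell$, and the $\rho|_{I_\ell}$‑preimage of $v\in\ol{J_\ell}$ is $t_{\ell-1}+\phi_\ell(v)$ (resp.\ $t_\ell-\phi_\ell(v)$) when $\rho|_{I_\ell}$ is increasing (resp.\ decreasing). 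Now sort $\Lambda^{(n)}$ as $\mu^{(n)}_1\le\dots\le\mu^{(n)}_{d_n}$; applying Lemma~\ref{lem:rearr_main} to the non‑decreasing linear spline through the sorted eigenvalues (whose ergodic limit is $\int_0^1F(f^\dag)$, since $\{D_n\}_n\sim_\lambda f^\dag$) together with the endpoint argument and Dini's Lemma~\ref{Dini} exactly as in the proof of Theorem~\ref{th:spectral_bounded_means_uniform}, one obtains $\max_k|\mu^{(n)}_k-f^\dag(k/d_n)|\to0$. Since $(f^\dag)^{-1}$ is uniformly continuous, the numbers $v^{(n)}_k\coloneqq(f^\dag)^{-1}(\mu^{(n)}_k)\in[0,1]$ then satisfy $\max_k|v^{(n)}_k-k/d_n|\to0$, so $\{v^{(n)}_k\}_k$ is an a.u.\ grid on $[0,1]$ and $f^\dag(v^{(n)}_k)=\mu^{(n)}_k$. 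Hence it suffices to choose, for each $k$, a preimage $\xi_k\in\rho^{-1}(v^{(n)}_k)$ so that $\{\xi_k\}_k$ is an a.u.\ grid on $[0,1]$: then $f(\xi_k)=f^\dag(\rho(\xi_k))=\mu^{(n)}_k$ automatically.

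\emph{Assignment to pieces, gluing, and exceptional indices.} The values $f(t_\ell)$ and the finitely many endpoints of the $\ol{J_\ell}$ give a finite set of ``special'' heights, each with $\mu_1\{f=u\}=0$, so by the ergodic formula only $o(d_n)$ indices $k$ have $v^{(n)}_k$ equal to a preimage of such a height; for these exceptional indices pick \emph{any} $\xi_k$ with $f(\xi_k)=\mu^{(n)}_k$ (possible since $\Lambda^{(n)}\cu\text{Range}(f)$). For every remaining index, $v^{(n)}_k$ lies in the open interior of at least one $J_\ell$; process those indices in increasing order of $k$ and assign $k$ to an eligible piece $\ell$ that is furthest ``behind schedule'', i.e.\ minimizing $a_\ell-d_n\phi_\ell(v^{(n)}_k)$ with $a_\ell$ the running count assigned to $\ell$, then set $\xi_k\coloneqq t_{\ell-1}+\phi_\ell(v^{(n)}_k)$ (resp.\ $t_\ell-\phi_\ell(v^{(n)}_k)$). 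A standard balancing estimate (the $\phi_\ell(v^{(n)}_k)$ are non‑decreasing in $k$, $\sum_\ell\phi_\ell(v^{(n)}_k)=v^{(n)}_k$, one counter incremented per step) keeps $|a_\ell-d_n\phi_\ell(v^{(n)}_k)|=O(1)$ throughout; hence for the indices $k_1<k_2<\dots$ assigned to a fixed $\ell$ one gets $\phi_\ell(v^{(n)}_{k_r})=r/d_n+O(1/d_n)$ and $|\{k_r\}|/d_n\to m_\ell$, so by the previous paragraph $\xi_{k_r}=t_{\ell-1}+r/d_n+O(1/d_n)$ form an a.u.\ grid on $[t_{\ell-1},t_\ell]$ of the right cardinality. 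Concatenating these $p$ a.u.\ grids on the consecutive intervals $[t_{\ell-1},t_\ell]$ gives, by the same computation as in Lemma~\ref{lem:union_of_au_grids_au} (whose equal‑proportion hypothesis is unnecessary, see the remark following it), an a.u.\ grid on $[0,1]$ with $d_n-o(d_n)$ points; re‑inserting the $o(d_n)$ exceptional points and invoking Lemma~\ref{lem:au_grids_up_to_o(n)} shows that the full set $\{\xi_k\}_{k=1}^{d_n}$ is still a.u.\ on $[0,1]$. Relabelling it as $\{\xi_{1,n},\dots,\xi_{d_n,n}\}$ and letting $\tau_n$ send $i$ to the index $k$ with $\xi_{i,n}\in\rho^{-1}(v^{(n)}_k)$ yields $\lambda^{(n)}_{\tau_n(i)}=\mu^{(n)}_{\tau_n(i)}=f(\xi_{i,n})$ for all $i$, as required.

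\emph{Main obstacle.} The crux is the last paragraph: making the per‑piece assignment precise so that the selected preimages genuinely form an a.u.\ grid on each $[t_{\ell-1},t_\ell]$ (not merely a grid that becomes uniform after the change of variable $\rho$), while keeping both the accumulated rounding error and the number of exceptional indices — those forced outside the open monotone pieces by the finitely many boundary values $f(t_\ell)$, which need not be attained in any open piece — at $o(d_n)$, so that Lemmas~\ref{lem:au_grids_up_to_o(n)} and~\ref{lem:union_of_au_grids_au} can absorb them.
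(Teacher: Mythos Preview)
Your route and the paper's diverge right after the common first step. Both use the monotone rearrangement machinery behind Theorem~\ref{th:spectral_bounded_means_uniform} to obtain, after a suitable permutation, $\max_i|f(\theta_{i,n})-\lambda^{(n)}_i|=c_n\to0$ for the regular grid $\theta_{i,n}=i/d_n$. From here the paper does something much more direct than your global assignment: it simply defines $\xi_{i,n}$ to be the element of $f^{-1}(\lambda^{(n)}_i)$ \emph{nearest to} $\theta_{i,n}$. The rest of the argument is purely local. On each open interval between consecutive critical points $f$ is continuous and strictly monotone, so for any $x$ at distance $\ge2\delta$ from every critical point one has $[f(x)-\epsilon,f(x)+\epsilon]\subseteq f([x-\delta,x+\delta])$ for some $\epsilon=\epsilon(\delta)>0$; hence, once $c_n<\epsilon$, the nearest preimage satisfies $|\xi_{i,n}-\theta_{i,n}|\le\delta$. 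The exceptional indices are those with $\theta_{i,n}$ within $2\delta$ of one of the finitely many critical points, i.e.\ $O(\delta d_n)$ of them; choosing $\delta_n\downarrow0$ slowly enough makes this $o(d_n)$, and Lemma~\ref{lem:au_grids_up_to_o(n)} finishes.

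Your proof, by contrast, first transports the eigenvalues to an a.u.\ grid $\{v^{(n)}_k\}$ via $(f^\dag)^{-1}$ (correct, and essentially a re-derivation of Theorem~\ref{th:spectral_bounded_means_uniform}), and then tries to distribute the $v^{(n)}_k$ among the overlapping intervals $J_\ell=\rho(I_\ell)$ by a greedy scheduler. The asserted bound $|a_\ell-d_n\phi_\ell(v^{(n)}_k)|=O(1)$ is not a standard estimate, and as written it is a genuine gap: the eligibility set changes as $v^{(n)}_k$ enters and exits the $J_\ell$, so a piece whose window closes while other eligible pieces are further behind may exit with $a_\ell$ far from $d_n m_\ell$ and can never recover; moreover the increments $d_n(\phi_\ell(v^{(n)}_{k+1})-\phi_\ell(v^{(n)}_k))$ need not be $O(1)$ individually, since the a.u.\ condition on $\{v^{(n)}_k\}$ controls only $\max_k|v^{(n)}_k-k/d_n|$, not the consecutive gaps. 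You correctly flag this as the main obstacle. The paper's ``nearest preimage'' device bypasses the allocation problem entirely: the choice of $\xi_{i,n}$ is determined pointwise by a local criterion, and the only global input needed is that the critical set is finite.
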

\begin{proof}
By hypothesis, we can apply directly Theorem \ref{th:spectral_bounded_means_uniform} and find that for the regular grid $\theta_{i,n} = i/d_n$ and for a specific ordering of the values $\lambda_i^{(n)}$,
    \[
\max_{i=1,\dots,d_n} \left| f(\theta_{i,n}) - \lambda^{(n)}_{i}  \right| = c_n
\to 0.
\]
Moreover, by hypothesis, $\lambda_i^{(n)}\in \text{Range}(f)$, so the sets $f^{-1}(\lambda_i^{(n)})$ are never empty. As a consequence, we can generate the grid $\mathcal G_n=\{\xi_{i,n}\}_{i=1,\dots,d_n}$
such that for any $i,n$, $\xi_{i,n}$ is the closest value to $\theta_{i,n}$ in $[0,1]$ such that $f(\xi_{i,n}) = \lambda_i^{(n)}$. All that is left to prove is that $\mathcal G_n$ is an a.u.\ grid.

For fixed $\delta,\ve>0$, define the set
\[
E_{\delta,\epsilon}\coloneqq \left\{ x\in[\delta,1-\delta] :
[f(x)-\epsilon, f(x) +\epsilon ] \not\cu f( [x-\delta,x+\delta] )  \right\}
\cup
[0,\delta]\cup [1-\delta,1]
\]
and notice that $\theta_{i,n} \in (E_{\delta,c_n})^C \implies  |\xi_{i,n}  - \theta_{i,n}| \le \delta$. Call now
$
\mathcal E_{\delta,n}\coloneqq \{ i : \theta_{i,n} \in E_{\delta,c_n}  \}\cu \{1,2,\dots,d_n\}.
$
Call $x_0< x_1<x_2<\dots<x_{k}<x_{k+1}$ the local minima and maxima and the discontinuity points of $f$, where $x_0=0$, $x_{k+1}=1$. Since $f$
 is continuous on the intervals $\mathcal I_i\coloneqq (x_i,x_{i+1})$, then it must be strictly monotonous on $\mathcal I_i$ for every $i=0,\dots k$. Let us fix the interval $\mathcal I = (a,b) = \mathcal I_i$, and suppose without loss of generality that $f$ is strictly increasing on $I$. Since $f$ is continuous on $\mathcal I$, the function $f^+_\delta(x) = f(x+\delta)-f(x)>0$ is also continuous and strictly positive on $[a+\delta,b-2\delta]$, that is a nonempty closed interval for $\delta$ small enough. As a consequence, $f^+_\delta(x)$ has a strictly positive minimum value $\ve^+>0$, and analogously, the function  $f^-_\delta(x) = f(x) - f(x-\delta)>0$ has a strictly positive minimum value $\ve^->0$ on the nonempty closed interval $[a+2\delta,b-\delta]$. If now $\ve = \min\{\ve^+,\ve^-\}>0$, we find that
 \[
 [f(x)-\epsilon, f(x) +\epsilon ] \cu f( [x-\delta,x+\delta] ),     \qquad \forall x\in [a+2\delta,b-2\delta],
 \]
and in particular, if $c_n<\epsilon$ and $\theta_{i,n}\in [a+2\delta,b-2\delta]$, then   $i\not \in \mathcal E_{\delta,n}$. Repeating the reasoning for all intervals, it is clear that for any $n$ big enough,
$\mathcal E_{\delta,n} \cu E_\delta \coloneqq \cup_{i=0,\dots,k+1} [x_i-2\delta,x_i+2\delta]$, that has Lebesgue measure $4(k+2)\delta$. Due to the grid of $\theta_{i,n}$ being regular on $[0,1]$ one finds that for any $n$ big enough, $|\mathcal E_{\delta,n}|\le 5(k+2)\delta d_n$, so it is possible to choose a sequence $\delta_n\to 0$ such that $|\mathcal E_{\delta_n,n}|= o(d_n)$.

To conclude the proof, let now  $\mathcal G'_n=\{\xi_{i,n} :\theta_{i,n} \in (E_{\delta_n,c_n})^C \} \cup \{\theta_{i,n} :\theta_{i,n} \in E_{\delta_n,c_n}  \}$ be a grid of the same cardinality of $\mathcal G_n$, and notice that it is an a.u.\ grid, since the distance from the regular grid of $\theta_{i,n}$ is uniformly bounded by $\delta_n\to 0$. Since it differs from the original grid $\mathcal G_n$ at most by $2|\mathcal E_{\delta_n,n}|= o(d_n)$ element, it is enough to apply Lemma \ref{lem:au_grids_up_to_o(n)} to conclude that also $\mathcal G_n$ is an a.u.\ grid.

\qed
\end{proof}

\subsection{Real Non-Symmetric Case}\label{ssec: real nonsymm}

\subsubsection{Eigenvectors}

The eigenvectors of $H_n(f)$ and $T_n(f)$ are the same for a real-valued even generating function $f$ (that is $\hat f_{k}=\hat f_{-k}$ any integer $k$, all being real). However, this is true in the more general case considered in the present subsection, where $\hat f_{k}$ is real for any integer $k$, but $f$ is complex-valued. Indeed, in the case where $f$ is complex-valued, but the Fourier coefficients are all real, the left-eigenvectors of $T_n(f)$ coincide with the eigenvectors of the real symmetric matrix $H_n(f)$.

\begin{theorem}
\label{th:nonsym-eigval}
Under the assumption that the Fourier coefficients are real, the eigenvectors of $H_n(f)$ are the same as the corresponding either left or right singular vectors of  $T_n(f)$ (and eigenvectors either of $(T_n(f))^{\textsc{t}}T_n(f)$ or of $T_n(f)(T_n(f))^{\textsc{t}}$).
\end{theorem}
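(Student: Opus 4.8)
The plan is to reduce everything to the elementary ``conjugation by the flip'' identity for Toeplitz matrices, combined with the observation, already recorded in the text above, that $H_n(f)=Y_nT_n(f)$ is a \emph{real symmetric} matrix precisely when the Fourier coefficients of $f$ are real: indeed $(H_n(f))_{ij}=(T_n(f))_{n+1-i,j}=\hat f_{\,n+1-i-j}$, which is real and invariant under $i\leftrightarrow j$, and for the same reason $T_n(f)$ itself is a real matrix. Consequently $H_n(f)$ admits an orthonormal basis of eigenvectors $\vv_1,\dots,\vv_n\in\f R^n$, say $H_n(f)\vv_i=\mu_i\vv_i$, and the claim becomes a statement about this basis.

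The only computational input is the identity $Y_nT_n(f)Y_n=(T_n(f))^{\textsc t}$, which is a one-line index check since $(Y_nT_n(f)Y_n)_{ij}=(T_n(f))_{n+1-i,\,n+1-j}=\hat f_{j-i}=(T_n(f))_{ji}$. Squaring $H_n(f)$ and inserting $Y_n^2=I$ then gives
\[
H_n(f)^2=Y_nT_n(f)\,Y_nT_n(f)=\bigl(Y_nT_n(f)Y_n\bigr)T_n(f)=(T_n(f))^{\textsc t}T_n(f),
\]
so $(T_n(f))^{\textsc t}T_n(f)\vv_i=H_n(f)^2\vv_i=\mu_i^2\vv_i$; hence the orthonormal eigenbasis $\{\vv_i\}$ of $H_n(f)$ is simultaneously an orthonormal eigenbasis of $(T_n(f))^{\textsc t}T_n(f)$, i.e.\ a complete system of right singular vectors of $T_n(f)$, the singular value attached to $\vv_i$ being $|\mu_i|$. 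For the ``left'' alternative I would introduce $\widehat H_n(f):=T_n(f)Y_n$, which by the same index computation is again real symmetric, satisfies $\widehat H_n(f)^2=T_n(f)(T_n(f))^{\textsc t}$, and is related to $H_n(f)$ by the orthogonal conjugation $H_n(f)=Y_n\,\widehat H_n(f)\,Y_n$. Therefore $\vv$ is an eigenvector of $H_n(f)$ with eigenvalue $\mu$ if and only if $Y_n\vv$ is an eigenvector of $\widehat H_n(f)$ with the same eigenvalue, and the flipped vectors $Y_n\vv_i$ form a complete system of eigenvectors of $T_n(f)(T_n(f))^{\textsc t}$, i.e.\ of left singular vectors of $T_n(f)$. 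This is exactly the stated ``either left or right'' dichotomy, the two descriptions being interchanged by $Y_n$.

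There is no genuine obstacle here; the only care needed is bookkeeping. One must match the orderings (the eigenvalue $\mu_i$ of $H_n(f)$ is paired with the \emph{nonnegative} singular value $|\mu_i|$, so a negative $\mu_i$ goes with the singular value $-\mu_i$) and handle multiplicities: when a singular value is repeated, the singular vectors are determined only up to an orthogonal change of basis inside the corresponding eigenspace, but the eigenspaces of $H_n(f)^2$ and of $(T_n(f))^{\textsc t}T_n(f)$ literally coincide, so any admissible choice on one side is admissible on the other. I would also flag that the hypothesis that $\hat f_k$ be real for all $k$ enters in exactly one place and is indispensable: it is what makes $H_n(f)$ Hermitian (real symmetric) rather than merely complex symmetric, and what identifies $(T_n(f))^{\textsc t}T_n(f)$ with the Gram matrix $(T_n(f))^{*}T_n(f)$ whose spectral decomposition produces the singular vectors; without it the chain $H_n(f)^2=(T_n(f))^{\textsc t}T_n(f)$ no longer links eigenvectors of $H_n(f)$ to singular vectors of $T_n(f)$.
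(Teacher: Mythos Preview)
Your proof is correct and, in fact, cleaner than the one in the paper. The paper argues via the singular value decomposition: it writes $T_n(f)=U\Sigma V$, observes that $Y_nU\Sigma V$ is then an SVD of $H_n(f)$, and separately writes the spectral decomposition $H_n(f)=QDQ^{\textsc t}=Q\Sigma SQ^{\textsc t}$ with $S$ a sign matrix; comparing the two SVDs of $H_n(f)$ (implicitly via uniqueness) identifies $Q$ with the right singular vectors $V$ of $T_n(f)$. Your route bypasses the SVD entirely by computing $H_n(f)^2=(T_n(f))^{\textsc t}T_n(f)$ from the flip identity $Y_nT_n(f)Y_n=(T_n(f))^{\textsc t}$, which immediately gives that every eigenvector of $H_n(f)$ is an eigenvector of $(T_n(f))^{\textsc t}T_n(f)$; the left-singular-vector half then follows from the conjugate relation $H_n(f)=Y_n\widehat H_n(f)Y_n$ with $\widehat H_n(f)^2=T_n(f)(T_n(f))^{\textsc t}$. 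What your approach buys is directness and a completely explicit identification of the eigenspaces, without any appeal to uniqueness of the SVD (which, as you correctly note, requires care when singular values are repeated). What the paper's approach buys is the explicit sign matrix $S$ linking $D$ and $\Sigma$, which it uses in the subsequent discussion of the distribution of signs $(-1)^{\alpha_j}$; your identity $\mu_i^2=\sigma_i^2$ encodes the same information, just less visibly.
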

\begin{proof}
Using the singular value decomposition we know that $T_n(f)=U\Sigma V$, where $U, V$ are unitary and $\Sigma$ is diagonal with the singular values ordered non-increasingly. Now $H_n(f)=Y_n T_n(f)$ is real symmetric and hence it admits the Schur decomposition in the form $QDQ^{\textsc{t}}$ where $Q$ is real orthogonal. However, $Y_n T_n(f)=Y_nU \Sigma V$ is automatically a singular value decomposition of $Y_n T_n(f)$, since $Y_n$ is unitary and hence
$Y_nU=W$ is the unitary matrix containing the left singular vectors of $Y_n T_n(f)$. In addition, since  $Y_n T_n(f)$ is real symmetric its eigenvalues are real and due to its normality the singular values are the moduli of the eigenvalues. In other terms we have
\[
D_{j,j}=\Sigma_{j,j} (-1)^{\alpha_j}, \ \ \ \ \alpha_j\in \{0,1\},
\]
that is $D=\Sigma S$ with $S$ phase matrix such that $D_{j,j}=(-1)^{\alpha_j}$.
From this we deduce
\[
H_n(f)=Q D Q^{\textsc{t}}=QS \Sigma Q^{\textsc{t}}= Q \Sigma SQ^{\textsc{t}}
\]
where the latter two, up to reordering, represent the singular value decomposition of $H_n(f)$ and the proof is over.

\qed
\end{proof}

\subsubsection{Eigenvalues}

We start this part by giving localization results for the eigenvalues of $H_n(f)$ in the case of a real-valued even generating function $f\in L^1(-\pi,\pi)$, i.e. $\hat f_{k}=\hat f_{-k}$ for any integer $k$, all being real. Then we provide an analogous result when the assumption that $f$ is real-valued is dropped. The two results are given as corollaries, the first consequence of Theorem \ref{cbt}, the second consequence of Theorem \ref{th:nonsym-eigval}.

\begin{corollary}\label{cor:loc1}
Let $f\in L^1(-\pi,\pi)$, $\hat f_{k}=\hat f_{-k}$ for any integer $k$, all the Fourier coefficients being real. Let $m$ be the essential infimum of $f$ and $M$ be the essential supremum of $f$. Then all the eigenvalues of $H_n(f)$ belong to $(-M,-m)\cup (m,M)$ for any size $n$ if $m<M$. If $m=M$ then $H_n(f)=mH_n(1)$ and hence the eigenvalues are $\pm 1$, for any size $n$.
\end{corollary}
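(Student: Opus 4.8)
The plan is to reduce the claim to the real symmetric Toeplitz setting and then combine the Cantoni--Butler relation of Theorem~\ref{cbt} with the classical localization of the spectrum of a Hermitian Toeplitz matrix inside the open interval $(m,M)$, recalled right after Theorem~\ref{teoszego-tyr} (see also \cite{local2,Se1}).

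First I would note that the hypothesis ``$\hat f_k=\hat f_{-k}$ for every integer $k$, all real'' is exactly the condition for $f$ to be real-valued a.e.\ and even (cf.\ the discussion opening Subsection~\ref{ssec: real symm}): indeed $\overline f$ and $f$ then share the same Fourier coefficients, so $\overline f=f$ a.e., and $\hat f_k=\hat f_{-k}$ gives $f(-\theta)=f(\theta)$. Consequently $T_n(f)$ is a \emph{real symmetric} Toeplitz matrix and $m,M$ are finite real numbers, so Theorem~\ref{cbt} (equivalently the Cantoni--Butler Theorem~\ref{th:Cantoni-Butler}) applies and yields, for a suitable ordering of the spectra, $\lambda_i(H_n(f))=(-1)^{i+1}\lambda_i(T_n(f))$ for $i=1,\dots,n$; in particular every eigenvalue of $H_n(f)$ has the form $\pm\lambda$ with $\lambda$ an eigenvalue of $T_n(f)$.

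If $m<M$, then $f$ is not constant a.e., so the localization result recalled after Theorem~\ref{teoszego-tyr} gives $\lambda_i(T_n(f))\in(m,M)$ for all $i$ and all $n$. Combining the two facts, any eigenvalue $\mu$ of $H_n(f)$ satisfies either $\mu=\lambda_i(T_n(f))\in(m,M)$ (odd $i$) or $\mu=-\lambda_i(T_n(f))\in(-M,-m)$ (even $i$), hence $\mu\in(-M,-m)\cup(m,M)$, which is the assertion; the two intervals need not be disjoint (e.g.\ when $0\in(m,M)$), but only membership in their union is claimed. In the remaining case $m=M$, the symbol equals the constant $m$ a.e., so $\hat f_0=m$ and $\hat f_k=0$ for $k\neq0$, whence $T_n(f)=mI_n$ and $H_n(f)=Y_nT_n(f)=mY_n=mH_n(1)$; since the eigendecomposition $H_n(1)=\mathbb S_n\mathbb H_n\mathbb S_n$ displayed before Theorem~\ref{cbt} shows the eigenvalues of $H_n(1)=Y_n$ to be $(-1)^{i+1}$, $i=1,\dots,n$, i.e.\ $+1$ with multiplicity $\lceil n/2\rceil$ and $-1$ with multiplicity $\lfloor n/2\rfloor$, the eigenvalues of $H_n(f)$ are $\pm m$ for every $n$.

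The whole argument is essentially bookkeeping; the only steps deserving an explicit sentence are the implication ``symmetric real Fourier coefficients $\Rightarrow$ $f$ real-valued a.e.'' (so that the Hermitian localization theorem becomes applicable) and the sign tracking in the constant case, and I foresee no genuine obstacle.
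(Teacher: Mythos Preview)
Your proof is correct and follows essentially the same approach as the paper: both arguments combine the localization result $\lambda_j(T_n(f))\in(m,M)$ for real-valued $f$ (cited as \cite{Se1} and recalled after Theorem~\ref{teoszego-tyr}) with Theorem~\ref{cbt} to transfer the bounds to $H_n(f)$. Your version is simply more explicit in justifying that the Fourier-coefficient hypothesis forces $f$ real-valued and $T_n(f)$ real symmetric, and in spelling out the constant case via the eigendecomposition of $Y_n$; note also that in that case the eigenvalues of $H_n(f)=mH_n(1)$ are $\pm m$ (the ``$\pm 1$'' in the statement refers to $H_n(1)$), which you handle correctly.
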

\begin{proof}
Under the given assumptions on the Fourier coefficients of the generating function $f$, we know that it is real-valued a.e. so that it makes sense to consider consider the essential infimum and the essential supremum of $f$, that is $m$ and $M$ are well defined.
Now by the localization results in \cite{Se1}, the eigenvalues of $T_n(f)$ are such that
\begin{itemize}
\item $\lambda_j(T_n(f))\in (m,M)$, $\forall j=1,\ldots,n$, $\forall n\in \mathbb{N}^+$, if $m<M$;
\item  $\lambda_j(T_n(f))=m$, $\forall j=1,\ldots,n$, $\forall n\in \mathbb{N}^+$, if $m=M$ since $T_n(f)=mI$.
\end{itemize}
Therefore, by invoking Theorem \ref{cbt}, the claimed thesis follows.
\qed
\end{proof}

\begin{corollary}\label{cor:loc2}
Let $f\in L^1(-\pi,\pi)$, $\hat f_{k}\in \mathbb{R}$ for any integer $k$. Let $d$ be the distance of the essential range of $f$ from the complex zero, $m$ be the essential infimum of $|f|$, and $M$ be the essential supremum of $|f|$. Then all the eigenvalues of $H_n(f)$ belong to $[-M,-d]\cup [d,M]$ for any size $n$ and, if $m<M$, then the number of those belonging $[-M,-m]\cup [m,M]$ are $n-c_n$ with $c_n=o(n)$ ($O(1)$ if $f$ is a trigonometric polynomial).
\end{corollary}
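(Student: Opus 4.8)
My plan is to transfer the whole statement to the singular values of $T_n(f)$, bound them two-sidedly by an elementary Rayleigh--quotient argument, and then count the ``small'' ones with the Szeg\H{o}-type results of Subsection~\ref{ssec:aux}. \emph{Step 1 (reduction).} By Theorem~\ref{th:nonsym-eigval} the matrix $H_n(f)=Y_nT_n(f)$ is real symmetric, hence normal, so its singular values equal the moduli of its eigenvalues; since $Y_n$ is unitary they also coincide with the singular values of $T_n(f)$. Thus $|\lambda_j(H_n(f))|=\sigma_j(T_n(f))$ for all $j$ and $n$, and the corollary becomes: (a)~$d\le\sigma_j(T_n(f))\le M$ for every $j,n$; and (b)~$\#\{j:\sigma_j(T_n(f))<m\}=c_n$ with $c_n=o(n)$, and $c_n=O(1)$ when $f$ is a trigonometric polynomial. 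The degenerate case $m=M$ is immediate (then $f\equiv m$ a.e., $H_n(f)=mH_n(1)$, eigenvalues $\pm m$, i.e.\ $\pm1$ after normalisation, exactly as in Corollary~\ref{cor:loc1}), so I assume $m<M$.

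\emph{Step 2 (the two-sided bound).} For $x\in\mathbb C^n$ put $p(\theta)=\sum_{k=1}^nx_ke^{\mathbf i k\theta}$; a direct computation gives $x^*T_n(f)x=\frac1{2\pi}\int_{-\pi}^{\pi}f(\theta)|p(\theta)|^2\,\mathrm d\theta$, $x^*x=\frac1{2\pi}\int_{-\pi}^{\pi}|p(\theta)|^2\,\mathrm d\theta$, and $\|T_n(f)x\|^2\le\frac1{2\pi}\int_{-\pi}^{\pi}|f(\theta)|^2|p(\theta)|^2\,\mathrm d\theta\le M^2\|x\|^2$ (equivalently, $T_n(f)$ is a compression of the multiplication operator $M_f$ on $L^2(-\pi,\pi)$, with $\|M_f\|=M$; the bound is vacuous if $f\notin L^\infty$), whence $\sigma_j(T_n(f))\le M$. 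On the other hand $x^*T_n(f)x/(x^*x)$ is a weighted mean of the values of $f$ and therefore lies in the closed convex hull of the essential range of $f$, which is the set at distance $d$ from the origin; hence $|x^*T_n(f)x|\ge d\|x\|^2$, and Cauchy--Schwarz yields $\|T_n(f)x\|\ge|x^*T_n(f)x|/\|x\|\ge d\|x\|$, i.e.\ $\sigma_j(T_n(f))\ge d$. This proves (a), hence the first claim of the corollary.

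\emph{Step 3 (the count, and the main obstacle).} By Szeg\H{o}'s theorem (Theorem~\ref{teoszego-tyr}), $\{T_n(f)\}\sim_\sigma f$, equivalently $\{T_n(f)^*T_n(f)\}\sim_\lambda|f|^2$; since $|f|\ge m$ a.e.\ and the minimum of the essential range of $|f|$ equals $m$, Theorem~\ref{th:su} gives $\#\{j:\sigma_j(T_n(f))<m-\epsilon\}=o(n)$ for every $\epsilon>0$, and sandwiching $\chi_{[0,m)}$ between continuous functions of bounded support and letting $\epsilon\downarrow0$ upgrades this to $\#\{j:\sigma_j(T_n(f))<m\}=o(n)$, i.e.\ $c_n=o(n)$. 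When $f$ is a trigonometric polynomial I would instead use the finite-section Widom identity $T_n(f)^*T_n(f)=T_n(|f|^2)-R_n$, where $R_n\succeq0$ is a sum of compressions of Hankel matrices of rank bounded by $\deg f$ uniformly in $n$: since $|f|^2\ge m^2$ a.e.\ forces $T_n(|f|^2)\succeq m^2 I$, Weyl's interlacing inequality gives $\#\{j:\lambda_j(T_n(f)^*T_n(f))<m^2\}\le\rk R_n=O(1)$, hence $c_n=O(1)$. I expect the genuinely delicate point to be exactly this passage to the sharp threshold $m$: the ergodic (Szeg\H{o}) formula only controls $\tfrac1n\#\{\sigma_j(T_n(f))<m\}$ up to the mass that $|f|$ puts at its essential infimum, so one must either exploit the polynomial structure (where that mass vanishes and $R_n$ has bounded rank) or the regularity of the generating functions occurring in the applications; Steps~1--2 are, by comparison, routine once the reduction $|\lambda_j(H_n(f))|=\sigma_j(T_n(f))$ is in place.
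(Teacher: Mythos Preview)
Your argument follows the paper's exactly: reduce to the singular values of $T_n(f)$ via Theorem~\ref{th:nonsym-eigval}, localize them in $[d,M]$, and count the small ones via the Szeg\H o distribution. The paper's proof is terse---it cites \cite{local2} for the localization and simply asserts the $O(1)$ bound for trigonometric polynomials---whereas you supply explicit arguments for both: a numerical-range/Rayleigh-quotient bound for the localization and a Widom-identity plus Weyl-interlacing argument for $c_n=O(1)$. The latter is genuinely more informative than the paper's bare assertion.

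One point to watch in Step~2: your lower bound actually shows $\sigma_{\min}(T_n(f))\ge d'$, where $d'$ is the distance of the \emph{closed convex hull} of the essential range from $0$, which can be strictly smaller than the distance of the essential range itself (e.g.\ $f(\theta)=e^{\mathbf i\theta}$, where the essential range is the unit circle but its convex hull contains $0$). In fact the literal distance of the essential range from $0$ always equals $m=\mathrm{essinf}\,|f|$, so the corollary's distinction between $d$ and $m$ only makes sense under the convex-hull reading; the paper's reference to weakly sectorial rotations of $f$ supports that interpretation, and under it your Step~2 is correct. You are right that the passage to the sharp threshold $m$ in Step~3 is the delicate point; the paper's proof is no more careful than yours there.
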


\begin{proof}
The singular values of $T_n(f)$ are localized in $[d,M]$, where the right parenthesis ``$]$" can by replaced by ``$)$" if $m<M$ and the left parenthesis ``$[$" can be replaced by ``$($", if there exist $\omega$ of modulus $1$ such that the essential range of $\omega f-d$ is weakly sectorial \cite{local2}.
Furthermore, if $m<M$ by the relation $\{ T_n(f)\}\sim_{\sigma} (f,[-\pi,\pi])$ in Theorem \ref{teoszego-tyr}, the number of the singular values of $T_n(f)$ not belonging to $[m,M]$ is $n-c_n$, $c_n=o(n)$ and $c_n=O(1)$ if $f$ is a trigonometric polynomial.

Now the claimed thesis follows by using Theorem \ref{th:nonsym-eigval}.
\qed
\end{proof}
Here, under the assumption that the Fourier coefficients are real, we refine the distribution results given in Theorem \ref{teoszego-tyr} for the Toeplitz matrix-sequence generated by $f$ and in \cite{Ferrari2019, ferrari2020,Hon2019,mazzapestana20,pestana} for the related flipped Toeplitz matrix-sequence.

Before doing it, we introduce the following notation, which was established in \cite{Ferrari2019} for certain symmetrized Toeplitz matrix-sequences. Given $D \subset \mathbb{R}$ with Lebesgue measure $0<\mu_1(D)<\infty$, we define $\widetilde D$ as $D\bigcup D_p$, where $p\in  \mathbb{R}$ and $D_p=p+D$, with the constraint that $D$ and $D_p$ have non-intersecting interior part, i.e. $D^\circ \bigcap  D_p^\circ  =\emptyset$. In this way, we have $\mu_1(\widetilde D)=2\mu_1(D)$. Given any $g$ measurable and defined over $D$, we define $\psi_g$ over $\widetilde D$ in the following fashion
	\begin{equation}\label{def-psi}
	\psi_g(x)=\left\{
	\begin{array}{cc}
	g(x), & x\in D, \\
	-g(x-p), & x\in D_p, \ x \notin D.
	\end{array}
	\right.\,
	\end{equation}
	
\begin{theorem}\label{th:distribution [0,pi]}
Let $f\in L^1(-\pi,\pi)$, $\hat f_{k}\in \mathbb{R}$ for any integer $k$. Let $p=\pi$, $D=[0,\pi]$, $D_p$, $\psi_f$, $\psi_{|f|}$ as (\ref{def-psi}).
Then the singular value distribution can be restricted to $[0,\pi]$ i.e.
\[
\{ T_n(f)\}\sim_{\sigma} (f,[0,\pi]),\ \ \{ H_n(f)\}\sim_{\sigma} (f,[0,\pi])
\]
and furthermore
\[
\{ H_n(f)\}\sim_{\lambda} (\psi_{|f|},D_p).
\]
If we add the even character of the Fourier coefficients, i.e., $\hat f_{k}=\hat f_{-k}\in \mathbb{R}$ for any integer $k$, then
\[
\{ T_n(f)\}\sim_{\lambda} (f,[0,\pi]),\ \ \{ H_n(f)\}\sim_{\lambda} (\psi_{f},D_p),\ \ \{ H_n(f)\}\sim_{\lambda} (\psi_{|f|},D_p).
\]
\end{theorem}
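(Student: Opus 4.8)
The plan is to split the statement into its singular-value part and its eigenvalue part, and in both cases to reduce everything to the symmetry that reality of the Fourier coefficients imposes on $f$, combined with the known spectral results for $\{Y_nT_n(f)\}$. The first, elementary, observation is that $\hat f_k\in\mathbb R$ for every $k$ is equivalent to $f(-\theta)=\overline{f(\theta)}$ for a.e.\ $\theta$ (the function $\theta\mapsto\overline{f(-\theta)}$ has Fourier coefficients $\overline{\hat f_k}$); in particular $|f|$ is even on $[-\pi,\pi]$, and if moreover $\hat f_k=\hat f_{-k}$ then $f$ itself is real-valued and even. For the singular values, Theorem~\ref{teoszego-tyr} gives $\{T_n(f)\}\sim_\sigma(f,[-\pi,\pi])$, i.e.\ $\frac1n\sum_jF(\sigma_j(T_n(f)))\to\frac1{2\pi}\int_{-\pi}^{\pi}F(|f|)$ for all $F\in C_c(\mathbb R)$; folding the integral by evenness of $|f|$ turns the right-hand side into $\frac1\pi\int_0^{\pi}F(|f|)$, which is exactly $\{T_n(f)\}\sim_\sigma(f,[0,\pi])$, and since $Y_n$ is unitary the same holds for $H_n(f)=Y_nT_n(f)$.

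For the eigenvalue claim $\{H_n(f)\}\sim_\lambda(\psi_{|f|},\widetilde D)$ (with $\widetilde D=D\cup D_p$, of measure $2\pi$, the domain on which $\psi_{|f|}$ is defined), I would use that $H_n(f)=[\hat f_{n+1-i-j}]_{i,j}$ is real symmetric, hence has real eigenvalues with $|\lambda_j(H_n(f))|=\sigma_j(H_n(f))$, so the moduli are distributed as $|f|$ over $[0,\pi]$ by the previous step. Decomposing a test function as $F=F_e+F_o$ into even and odd parts, the even part of $\frac1n\sum_jF(\lambda_j(H_n(f)))$ is controlled by the singular-value statement and matches $\frac1{2\pi}\int_{\widetilde D}F_e(\psi_{|f|})$, whereas $\frac1{2\pi}\int_{\widetilde D}F_o(\psi_{|f|})=0$ because $\psi_{|f|}$ takes opposite values on the two halves of $\widetilde D$. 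Everything thus reduces to $\frac1n\sum_jF_o(\lambda_j(H_n(f)))\to0$, i.e.\ asymptotic sign balance of the spectrum of $H_n(f)$; this is exactly the information contained in the flipped-Toeplitz distribution results of \cite{Ferrari2019,Hon2019,Mazza2018}, which already identify $\{H_n(f)\}\sim_\lambda$ with a symbol on a domain of measure $2\pi$, and rewriting that symbol by the evenness of $|f|$ gives $(\psi_{|f|},\widetilde D)$.

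When $\hat f_k=\hat f_{-k}\in\mathbb R$, the symbol $f$ is real and even and $T_n(f)$ is real symmetric, so Theorem~\ref{teoszego-tyr} gives $\{T_n(f)\}\sim_\lambda(f,[-\pi,\pi])$, collapsing to $\{T_n(f)\}\sim_\lambda(f,[0,\pi])$ by evenness of $f$. The eigenvalue statements for $H_n(f)$ in this case then come essentially for free: the general case already gives $\{H_n(f)\}\sim_\lambda(\psi_{|f|},\widetilde D)$, and $\psi_f$ (now real-valued) and $\psi_{|f|}$ are equimeasurable — each is symmetric about $0$ with absolute value distributed as $|f|$ over $[0,\pi]$ — so $\{H_n(f)\}\sim_\lambda(\psi_f,\widetilde D)$ follows at once. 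Alternatively this is consistent with part~2 of Theorem~\ref{cbt}, $\lambda_i(H_n(f))=(-1)^{i+1}\lambda_i(T_n(f))$, once one knows that the eigenvalues of $T_n(f)$ attached to each flip-eigenspace are themselves distributed as $f$ — which, at least for $f\in L^2$, is immediate because in the natural orthonormal basis of each eigenspace $T_n(f)$ restricts to $T_{\lfloor n/2\rfloor}(f)$ plus a Hankel term with squared Frobenius norm $o(n)$, hence zero-distributed and harmless (e.g.\ Corollary~5.2 of \cite{GLT-book-I}).

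The properly non-routine point — the one I expect to be the main obstacle — is the sign (equivalently inertia) behaviour of $H_n(f)$ invoked in the second paragraph, together with its sharpening ``each flip-eigenspace of $T_n(f)$ carries the full symbol'' used in the third. The magnitudes of the eigenvalues are fixed once and for all by the singular-value distribution (unitary invariance plus Szeg\H{o}), but their signs are invisible at that level; that is the genuinely spectral ingredient, and I would import it from the earlier $\{Y_nT_n(f)\}$ analyses, with the Toeplitz-plus-Hankel computation above serving as a self-contained replacement in the trigonometric-polynomial or $L^2$ case.
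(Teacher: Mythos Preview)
Your approach is correct and essentially the same as the paper's. Both arguments hinge on (i) deducing $|f(\theta)|=|f(-\theta)|$ a.e.\ from the reality of the Fourier coefficients, (ii) folding the Szeg\H{o}--Tyrtyshnikov singular-value statement from $[-\pi,\pi]$ to $[0,\pi]$ and carrying it to $H_n(f)$ by unitary invariance, and (iii) importing the eigenvalue distribution of $\{Y_nT_n(f)\}$ from the flipped-Toeplitz literature and rewriting the resulting symbol via the evenness of $|f|$ (respectively $f$ in the even case). The paper does this in two lines, citing \cite{ferrari2020} directly; your even/odd decomposition of the test function is a pleasant way of isolating the non-trivial ``sign balance'' content, but as you yourself note it ultimately reduces to invoking the same external result. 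Your observation that $\psi_f$ and $\psi_{|f|}$ are equimeasurable when $f$ is real is a clean way to pass between the last two distributions, and your Cantoni--Butler plus Toeplitz-plus-Hankel sketch is a genuine alternative for the $L^2$ case that the paper does not pursue.
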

\begin{proof}
By the assumption we have ${\rm Re}(f(\theta))={\rm Re}(f(-\theta))$ and ${\rm Im}(f(\theta))=-{\rm Im}(f(-\theta))$ a.e., which implies that for any subinterval $I\subset [-\pi,0]$ the essential range of $f$ restricted to $I$ is a reflection along the real axis of the the essential range of $f$
restricted to $-I\subset [0,\pi]$. Hence
\begin{equation}\label{reflection}
|f(\theta)|=|f(-\theta)|
\end{equation}
a.e. and as consequence Theorem \ref{teoszego-tyr}, which claims $\{ T_n(f)\}\sim_{\sigma} (f,[-\pi,\pi])$, reduces to $\{ T_n(f)\}\sim_{\sigma} (f,[0,\pi])$ which is the same as $\{ H_n(f)\}\sim_{\sigma} (f,[0,\pi])$ since $H_n(f)$ and $T_n(f)$ share the same eigenvalues.
Now by using the main result in \cite{ferrari2020} the statement $\{ H_n(f)\}\sim_{\lambda} (\psi_{|f|},D_p)$ follows.

If we add to the assumptions the even character of the Fourier coefficients, we have  ${\rm Im}(f(\theta))=0$ a.e. so that $f$ is real-valued and again the combination of Theorem \ref{teoszego-tyr}, of the main result in \cite{ferrari2020}, and of the fact that $f(\theta)=f(-\theta)$ a.e. allows to deduce the desired result.
\qed
\end{proof}

Now, with reference to the proof of Theorem \ref{th:nonsym-eigval}, for analyzing the distribution of the signs $(-1)^{\alpha_j}$, it is enough to recall that the sequence $\{T_n(f)\}$ is distributed in the singular value sense as $f$ while, $\{H_n(f)\}$ is distributed as $\pm |f|$ (see \cite{Ferrari2019,ferrari2020,Hon2019,mazzapestana20,pestana} and Theorem \ref{th:distribution [0,pi]}): from this we deduce that
\[
\lim_{n\rightarrow \infty}\sum_{j=1}^n \frac{(-1)^{\alpha_j}}{n}=0
\]
and therefore, up to $o(n)$ outliers, there is around $n/2$ positive signs and $n/2$ negative signs.
In the case where the minimum of $|f|$ is zero, we do not have outliers. Furthermore in the case where $f$ is smooth, up to a negligible number of outliers the eigenvalues of $H_n(f)$ can be seen as a sampling of $\pm |f|$: the number of these outliers all with modulus less than $\min |f|$ can be bounded by a constant independent of $n$, when $f$ is also a trigonometric polynomial.

On the basis of the latter discussion, we end the current theoretical section with a conjecture.

\begin{conj}
\label{conj:nonsym-eigval}
In the case where $f$ is non-symmetric, real-valued, with the notations of Corollary \ref{cor:loc2}, and with $d=m$, the eigenvalues of $H_n(f)$ are related to the singular values of $T_n(f)$ as follows
\begin{equation*}
\lambda_j(H_n(f))=(-1)^{j+1}\sigma_j(T_n(f)),
\end{equation*}
where $\sigma_j(T_n(f))$ are ordered as the samplings of the symbol $f(\xi_{j,n})f(-\xi_{j,n})$, where $\sigma_j(T_n(f))=\sqrt{f(\xi_{j,n})f(-\xi_{j,n})}=|f(\xi_{j,n})|$ and
\begin{equation}
0\leq\xi_{1,n}\leq \xi_{2,n}\leq\ldots\leq \xi_{n,n}\leq \pi,
\label{eq:ordering2}
\end{equation}
with $\{\xi_{j,n}\}$ being a.u. on $[0,\pi]$.
\end{conj}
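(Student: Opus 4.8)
The plan is to run the argument of Theorem~\ref{th:Carl_generalized} with singular values replacing the (unavailable) eigenvalues of the non-symmetric $T_n(f)$, and then to face the one genuinely new point, the \emph{exact} sign pattern. Since the Fourier coefficients are real, $T_n(f)$ is a real matrix, $H_n(f)=Y_nT_n(f)$ is real symmetric, and $H_n(f)^2=Y_nT_n(f)Y_nT_n(f)=T_n(f)^{\textsc t}T_n(f)$ has eigenvalues $\sigma_j(T_n(f))^2$; Theorem~\ref{th:nonsym-eigval} then gives $\lambda_j(H_n(f))=(-1)^{\alpha_j}\sigma_j(T_n(f))$ for some $\alpha_j\in\{0,1\}$. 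Hence the statement splits into two parts: (a) after a suitable ordering the $\sigma_j(T_n(f))$ are the samples $|f(\xi_{j,n})|$ of $|f|$ on an a.u.\ grid of $[0,\pi]$ sorted as in \eqref{eq:ordering2}; and (b) in that same ordering $\alpha_j\equiv j+1\pmod{2}$, with \emph{no} $o(n)$ correction allowed.

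For (a), note that $|f|$ is even, because real Fourier coefficients force $f(-\theta)=\overline{f(\theta)}$, whence also $\sqrt{f(\xi)f(-\xi)}=|f(\xi)|$ as in the conjecture. By Theorem~\ref{th:distribution [0,pi]}, $\{T_n(f)\}\sim_\sigma(|f|,[0,\pi])$. Assuming $f$ is regular enough that $|f|$ has connected range and only finitely many local extrema and discontinuities on $[0,\pi]$ (this is where a hypothesis such as ``$f$ is a trigonometric polynomial'' is needed, in line with the $O(1)$-outlier clause of Corollary~\ref{cor:loc2}), and using the hypothesis $d=m$ so that Corollary~\ref{cor:loc2} puts every singular value into $\mathrm{Range}(|f|)=[\min|f|,\max|f|]$ with no outlier below $\min|f|$, one applies Theorem~\ref{th:main2} (rescaling $[0,1]$ to $[0,\pi]$) with $\lambda_i^{(n)}=\sigma_i(T_n(f))$ and symbol $|f|$. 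This produces an a.u.\ grid $\{\xi_{i,n}\}$ on $[0,\pi]$ and a permutation with $\sigma_{\tau_n(i)}(T_n(f))=|f(\xi_{i,n})|$; sorting the grid increasingly gives \eqref{eq:ordering2} and settles (a).

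For (b), the distribution $\{H_n(f)\}\sim_\lambda(\psi_{|f|},D_p)$ of Theorem~\ref{th:distribution [0,pi]} shows that, up to $o(n)$ exceptions, the eigenvalues of $H_n(f)$ split into $\approx n/2$ ``positive'' ones distributed as $|f|$ and $\approx n/2$ ``negative'' ones distributed as $-|f|$. Running the scheme of Theorem~\ref{th:Carl_generalized} --- split $\Lambda^{(n)}$ by sign, apply Lemma~\ref{lem:splitting} and Theorems~\ref{th:main2}/\ref{th:main} to each half, repair the cardinalities with Lemma~\ref{lem:au_grids_up_to_o(n)}, and glue the two a.u.\ grids with Lemma~\ref{lem:union_of_au_grids_au} --- then gives an a.u.\ grid on $[0,\pi]$ and an ordering of the $\sigma_j$ in which the signs alternate, but only \emph{up to an $o(n)$ reindexing}.

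\textbf{The main obstacle} is precisely upgrading this to the strict alternation $\lambda_j(H_n(f))=(-1)^{j+1}\sigma_j(T_n(f))$ in the $\xi$-order, for every fixed $n$: every tool used above is blind to $o(n)$ perturbations and cannot by itself force it. In the even case the strict pattern is algebraic (Cantoni--Butler, Theorems~\ref{th:Cantoni-Butler} and \ref{cbt}): a real symmetric Toeplitz matrix is centrosymmetric, its eigenvectors can be taken alternately $Y_n$-symmetric and $Y_n$-skew, and $H_n$ flips exactly the odd-indexed ones. For non-symmetric $T_n(f)$ there is no such partition of the eigenvectors of $H_n(f)$, so a new ingredient is required; I would try, in increasing order of optimism: (i) an interlacing argument showing that the positive and negative eigenvalues of $H_n(f)$ strictly interlace along the $\xi$-grid, exploiting the eigenvector description of Theorem~\ref{th:nonsym-eigval} (eigenvectors of $H_n(f)$ are right singular vectors of $T_n(f)$, whose entries inherit, for real-coefficient symbols, a reflection structure reminiscent of \eqref{S_n columns}) together with $H_n(f)^2=T_n(f)^{\textsc t}T_n(f)$; (ii) a homotopy deforming $f$ to an even symbol with the same modulus, tracking $\lambda_j(H_n(f_t))$ and showing a sign can change only by passing through $0$, which under $d=m$ occurs for at most finitely many $t$; (iii) a fine matrix-less/spline analysis showing that the sampling order imposed by $\{H_n(f)\}\sim_\lambda\psi_{|f|}$ is consistent only with strict alternation. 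Route (i) looks the most promising, but it is exactly where the conjectural gap lies, which is why the statement is left as a conjecture.
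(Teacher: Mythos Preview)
Your analysis is essentially correct, and you have correctly identified that the statement is a \emph{conjecture}: the paper does not prove it in general. There is no ``paper's own proof'' to compare against. What the paper does instead is provide two pieces of supporting evidence under the headings ``Numerical verification~1'' and ``Numerical verification~2'' of Conjecture~\ref{conj:nonsym-eigval}. The first is an explicit hand verification for the two special symbols $f_1(\theta)=1+\E^{\mathbf{i}\theta}$ and $f_2(\theta)=1-\E^{\mathbf{i}\theta}$, where $T_n(f)^{\textsc t}T_n(f)$ and a permuted $H_n(f)$ happen to belong to known $\tau_{\varepsilon,\varphi}$-algebras with explicit grids, so the conjectured identity can be checked by direct trigonometric manipulation. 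The second is purely numerical experimentation with the Grcar symbol.

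Your decomposition into parts (a) and (b) is sound, and your diagnosis of the obstacle is exactly right: the distributional machinery (Theorems~\ref{th:main}, \ref{th:main2}, Lemmas~\ref{lem:splitting}, \ref{lem:au_grids_up_to_o(n)}, \ref{lem:union_of_au_grids_au}) is insensitive to $o(n)$ reorderings and therefore cannot on its own force the \emph{exact} alternation $(-1)^{j+1}$ for every $j$ and every fixed $n$. In the real-even case this rigidity comes from the Cantoni--Butler centrosymmetric structure (Theorem~\ref{cbt}), which is simply unavailable when $T_n(f)$ is non-symmetric. The paper does not offer any replacement mechanism; it leaves the statement as a conjecture precisely at the point you identify. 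Your proposed routes (i)--(iii) are reasonable research directions, but none of them is carried out in the paper either.
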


\section{Numerical Experiments}\label{sec:num}
In this section we start by giving numerical numerical evidence of the results in Theorem \ref{th:Carl_generalized} and Theorem \ref{th:Stefano_version}. Then we draw some relevant conclusion.

\begin{exmp}
\label{exmp:num-a}
We first define the following generating function, defined in the reference interval $[-\pi,\pi]$, that is,
\begin{align}
f(\theta)=
\begin{cases}
f(-\theta),&-\pi\leq\theta<0,\\
1,&0\leq \theta<\pi/2,\\
\theta+1-\pi/2, &\pi/2\leq \theta\leq \pi,
\end{cases}
\end{align}
Then, we generate the matrix $T_n(f)$ for $n=20$ and compute the eigenvalues, listed in Table~\ref{tbl:num-a} (computed with high precision, and then truncated to 20 digits).
As it can be observed, the generating function satisfies the requirements of Theorem \ref{th:Stefano_version}, but not those of Theorem \ref{th:Carl_generalized}, since there exist uncountable many minima/maxima for $\theta\in [0,\pi/2)$ and one local minimum at $\theta=\pi/2$.
\begin{table}[H]
\centering
\caption{The eigenvalues of $\lambda_j(T_n(f))$ and $\lambda_{j}(H_n(f))$ for $n=20$, and a grid $\xi_{j,n}$ such that $\lambda_j(T_n(f))=f(\xi_{j,n})$.}
\label{tbl:num-a}
\begin{tabular}{rrrr}
\toprule
$j$&$\lambda_j(T_n(f))$&$\lambda_j(H_n(f))$&$\xi_{j,n}$\\
\midrule
1 & 1.00000000000000353822 & 1.00000000000000353822 & 1.57079632679490009622 \\
2 & 1.00000000000071333310 & -1.00000000000071333310 & 1.57079632679560989110 \\
3 & 1.00000000006613777056 & 1.00000000006613777056 & 1.57079632686103432856 \\
4 & 1.00000000369131598005 & -1.00000000369131598005 & 1.57079633048621253805 \\
5 & 1.00000013757194985002 & 1.00000013757194985002 & 1.57079646436684640802 \\
6 & 1.00000359995028312744 & -1.00000359995028312744 & 1.57079992674517968544 \\
7 & 1.00006712436027692073 & 1.00006712436027692073 & 1.57086345115517347873 \\
8 & 1.00088357014017741679 & -1.00088357014017741679 & 1.57167989693507397479 \\
9 & 1.00779697209247498221 & 1.00779697209247498221 & 1.57859329888737154021 \\
10 & 1.04221339677660198160 & -1.04221339677660198160 & 1.61300972357149853960 \\
11 & 1.13188837384995795521 & 1.13188837384995795521 & 1.70268470064485451321 \\
12 & 1.26170015132705443149 & -1.26170015132705443149 & 1.83249647812195098949 \\
13 & 1.40266551094883595348 & 1.40266551094883595348 & 1.97346183774373251148 \\
14 & 1.54926998899631018209 & -1.54926998899631018209 & 2.12006631579120674009 \\
15 & 1.69790994462440194399 & 1.69790994462440194399 & 2.26870627141929850199 \\
16 & 1.84835063290469307888 & -1.84835063290469307888 & 2.41914695969958963688 \\
17 & 1.99917620458656980870 & 1.99917620458656980870 & 2.56997253138146636670 \\
18 & 2.15136121246339999889 & -2.15136121246339999889 & 2.72215753925829655689 \\
19 & 2.30273860591566235042 & 2.30273860591566235042 & 2.87353493271055890842 \\
20 & 2.45795620370766419040 & -2.45795620370766419040 & 3.02875253050256074839 \\
\bottomrule
\end{tabular}
\end{table}
If Figure~\ref{fig:num-a} we show the eigenvalues and the eigenvalue symbols for the Toeplitz and for the corresponding Hankel matrix-sequences. The perfect grid is in this case $\xi_{j,n}=\lambda_{j,n}+\pi/2-1$ and no eigenvalue is equal to one, as expected from the theory (see \cite{absolute-extreme-S,absolute-extreme-T}). This perfect grid is not a.u. in $[0,\pi]$ and it is not a.u. in $[\pi/2,\pi]$ since, as expected from the theoretical findings, $n/2$ eigenvalues of $T_n(f)$ cluster to one, with the smallest tending exponentially to one as the matrix-size $n$ tends to infinity \cite{absolute-extreme-S,absolute-extreme-T}.
\begin{figure}[H]
    \centering
    \includegraphics[width=0.48\textwidth]{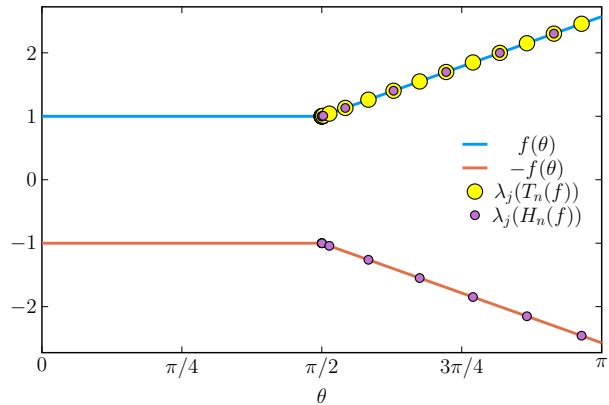}
    \caption{[Example~\ref{exmp:num-a}] Symbol which is constant on $\theta=[-\pi/2,\pi/2]$. }
    \label{fig:num-a}
    \end{figure}
    
\end{exmp}
\begin{exmp}
\label{exmp:num-b}
In this example we take into consideration a non-monotone generating function,
\begin{align}
f(\theta)=16-2\cos(\theta)-2\cos(2\theta)+\cos(3\theta),
\end{align}
and we compute the eigenvalues of $T_n(f)$ and $H_n(f)$ for $n=20$, as displayed in Figure~\ref{fig:num-b}. Here the perfect grid $\xi_{j,n}$ was computed numerically with a root finder, with $\theta_{j,n}=j\pi/(n+1)$ as an initial guess. Looking at the signs of the eigenvalues $\lambda_j(H_n(f))$, we conclude that in this specific example the perfect grid, taken from a a.u. sequence of grids in $[0,\pi]$ exists, in accordance with Theorem \ref{th:Carl_generalized}.
\begin{figure}[H]
    \centering
    \includegraphics[width=0.48\textwidth]{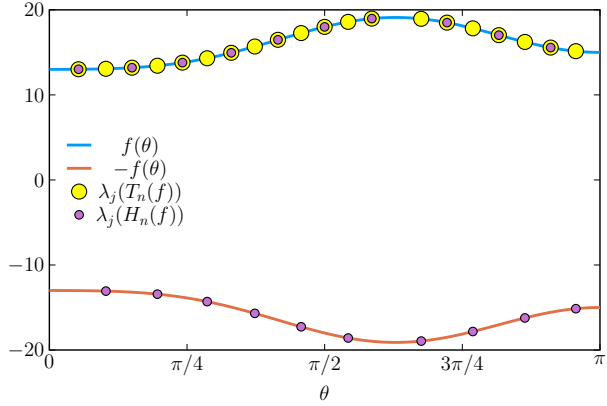}
    \caption{[Example \ref{exmp:num-b}] Non-monotone symbol $f(\theta)=16-2\cos(\theta)-2\cos(2\theta)+\cos(3\theta)$.}
    \label{fig:num-b}
    \end{figure}
\end{exmp}
\begin{exmp}
\label{exmp:num-c}
In the present example we observe a large discontinuity in the generating function at $\pi/2$. Indeed, the formal expression of $f$ is the following
\begin{align}
f(\theta)=
\begin{cases}
f(-\theta),&-\pi\leq\theta<0,\\
\cos(2\theta)+\cos(3\theta),&0\leq \theta<\pi/2,\\
\theta, &\pi/2\leq \theta\leq \pi.
\end{cases}
\end{align}
Again we compute the eigenvalues of $T_n(f)$ and $H_n(f)$ for $n=20$ and we present a possible distribution of eigenvalues, according to a computed grid $\xi_{j,n}$, in Figure~\ref{fig:num-c}.
Again the latter represents a numerical evidence of Theorem \ref{th:Carl_generalized}, since the presence of a finite number of local minima/maxima and discontinuity points does not spoil the result and a perfect grid taken from a a.u grid sequence in $[0,\pi]$ exists.

\begin{figure}[H]
    \centering
    \includegraphics[width=0.48\textwidth]{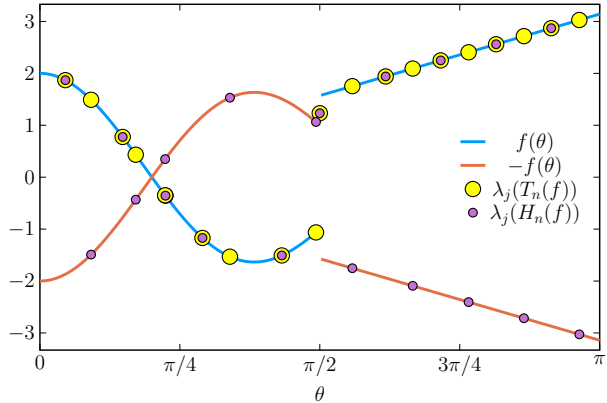}
    \caption{[Example \ref{exmp:num-c}] Symbol with finite number of local minima/maxima and discontinuity points.}
    \label{fig:num-c}
    \end{figure}
\end{exmp}
As Figure \ref{fig:num-a} shows, we stress again that the thesis of Theorem \ref{th:Stefano_version} is actually sharp and implies that the perfect grid does not exist. More precisely it is not true that $\lambda_i(T_n(f)) = f(\xi_{i,n})$ with $\{\xi_{1,n}, \xi_{2,n}, \dots, \xi_{n,n}\}_n$ a.u. grid in $[0,\pi]$ and actually the error term is necessary. In fact, Figure \ref{fig:num-a} shows that no grid points can be present in the whole and large subinterval $[0,\pi/2]$.
By the way this error term is of the form $e^{-cn}$ for some positive $c$ independent of $n$ for the minimal eigenvalue (see the combination of \cite{absolute-extreme-S,absolute-extreme-T}).
On the other hand, as reported in Figure \ref{fig:num-b} and Figure \ref{fig:num-c}, by strengthening a bit the assumptions, Theorem \ref{th:Carl_generalized} implies that the perfect grid sequence exists and it is a.u. in $[0,\pi]$, that is it is equally distributed with $\left\{\frac{i\pi}{n+1}\right\}_n$.

Therefore, if we admit an infinitesimal error, then we always have
\begin{equation}\label{error}
\lambda_i(T_n(f)) = f\left(\frac{i\pi}{n+1} \right) + \phi_{i,n}
\end{equation}
with $\phi_{i,n}$ uniformly converging to zero as $n$ tends to infinity, $i=1,\ldots,n$. In the case treated in Theorem \ref{th:Carl} that is when the generating function $f$ is monotone on $[0,\pi]$ and smooth, the error given in equation (\ref{error}) can be expanded asymptotically as
\begin{equation}\label{error-monotone}
 \phi_{i,n} = c_1\left(\frac{i\pi}{n+1} \right) h + c_2\left(\frac{i\pi}{n+1} \right) h^2 \cdots + c_k\left(\frac{i\pi}{n+1} \right) h^k + o(h^k)
\end{equation}
for given fixed functions $c_1,\ldots,c_k$ and with $h=\frac{1}{n+1}$. The asymptotic expansion in (\ref{error-monotone}) allows the use of linear in time matrix-less extrapolation-interpolation procedures for the computation of all the spectrum of $T_n(f)$, with large matrix-order $n$, given the eigenvalues of $T_{n_j}(f)$, $j=0,\ldots,q$, $q$ small and with $n_0<n_1<\cdots < n_q$, $n_q$ being moderate compared with $n$. See \cite{EFS,matrix-less1,EGS} and references therein, where also several numerical experiments are carried out, also concerning the statement in  Theorem \ref{th:Carl}.

In the non-monotone setting, if a given $\lambda_j(T_n(f))$ belongs to the interior part of $[a,b]$ such that $I=f^{-1}|_{[a,b]}$, that is $f$ restricted on $I$ is still monotonic, then (\ref{error-monotone}) is still valid and the same type of procedures can be applied. Otherwise, when the equation $f(\theta)=\lambda_j(T_n(f))$ has more than one solution, then the considered algorithms fail.

A way for recovering at least partially the good performance of the matrix-less procedures relies in employing the monotone rearranged function of $f$: for the notion of rearrangement see Subsection \ref{ssec:aux} and for its use in the context of matrix-less algorithms refer to \cite{EM22,exp-rearranged}. 

A discrete version of the rearrangement is represented by the use of permutations and this is precisely what we try in the subsequent lines. Indeed for approximating  $\lambda_i(T_n(f))$ and $\lambda_i(H_n(f))$ we will use 
$f\left(\frac{\Pi_n(i)\pi}{n+1}\right)$ instead of the already reasonable approximation $f\left(\frac{i\pi}{n+1} \right)$.
\begin{remark}
\label{rmrk:nonmon}
In the case of a non-monotone $f$ in Theorem \ref{th:Carl_generalized} and Theorem~\ref{th:Stefano_version}, the best ordering of the eigenvalues and hence the ``perfect grid'' $\xi_{j,n}$  is not obvious if we want to rely on an asymptotic expansion as in the monotone setting. Hence, in these cases the eigenvalues of $H_n(f)$ might give insights regarding the correct ordering of the eigenvalues of $T_n(f)$. A substantial benefit is that a matrix-less method could then be employed also for some non-monotone symbols (see \cite{matrix-less1} and references there reported).

Take, for example, the symbol $f(\theta)=\cos(\theta)+\cos(2\theta)$. In the left panel of Figure~\ref{fig:ordering} we see the symbol $f(\theta)$ (green line), equispaced samplings of the symbol $f(\theta_{j,n})$ (green circles), and the eigenvalues of $T_n(f)$ (yellow circles), for $n=10$. The eigenvalues are ordered with a permutation $\Pi_n^{-1}(j)$, such that, $f(\theta_{\Pi_n(1),n})\leq\ldots\leq f(\theta_{\Pi_n(n),n})$ (since $f$ is non-monotone different grid samplings might coincide, in that case we order the samplings $f(\theta_{\Pi_n(j),n})$ according to the index $j$).
The ``perfect grid'' $\xi_{j,n}$, such that $f(\xi_{j,n})=\lambda_j(T_n(f))$, is computed numerically for the ordering given the permutation $\Pi_n^{-1}(j)$. However, since the symbol $f$ is non-monotone, we expect that the ordering $\Pi_n(j)^{-1}$ might be not the most precise (we define it on a grid $\theta_{j,n}$ instead of the true unknown, and non-unique, $\xi_{j,n}$). Hence, the approximated $\xi_{j,n}$ might be not optimal.

\begin{figure}[H]
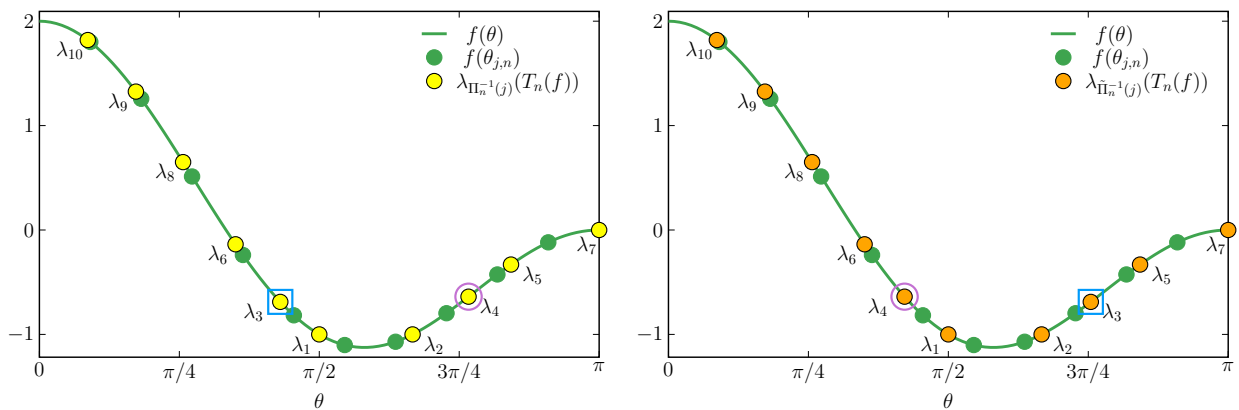

\centering
\includegraphics[width=0.48\textwidth]{non-monotone_symbol_a.pdf}
\includegraphics[width=0.48\textwidth]{non-monotone_symbol_b.pdf}
\caption{[Ordering of eigenvalues (non-monotone symmetric symbol, $f(\theta)=\cos(\theta)+\cos(2\theta)$))] Left: Ordering $\Pi_n^{-1}(j)$ (match with symbol samplings $f(\theta_{j,n})$).   Right: Ordering $\tilde{\Pi}_n^{-1}(j)$, where two eigenvalues ($\lambda_3$, blue square, and $\lambda_4$, pink cicle) switch places, due to a mismatch with the signs of the eigenvalues of $H_n(f)$.}
\label{fig:ordering}
\end{figure}

In  Table~\ref{tbl:ordering1} we first present the permutations $\Pi_n(j)$ and $\Pi_n^{-1}(j)$, and the samplings of the symbol $f(\theta)$ with the grid $\theta_{j,n}=j\pi/(n+1)$ for $j=1,\ldots, n$, and $n=10$.

Then, the eigenvalues $\lambda_{\Pi_n^{-1}(j)}(T_n(f))$ are listed. For a correct ordering of the eigenvalues we assume, from Remark \ref{rmrk:nonmon}, that $(-1)^{j+1}\lambda_{\Pi_n^{-1}(j)}(T_n(f))$ should coincide with $\lambda_{\rho(j)}(H_n(f))$ (where $\rho(j)$ is a permutation such that $|\lambda_{\Pi_n^{-1}(j)}(T_n(f))|=|\lambda_{\rho(j)}(H_n(f))|$; of course in the rare case of multiplicity larger than one then the permutation is adapted). Highlighted in red, in the last two columns, we see a mismatch in signs for the fifth and eighth eigenvalues (of the permuted ordering $\Pi_n^{-1}(j)$, that is the original eigenvalues $\lambda_3$ (blue square) and $\lambda_4$ (pink circle)). These two eigenvalues are highlighted in the left panel of Figure~\ref{fig:ordering}; blue square and pink circle respectively.

\begin{table}[H]
\centering
\caption{[Ordering of eigenvalues (non-monotone symmetric symbol, $f(\theta)=\cos(\theta)+\cos(2\theta)$))] Ordering of the eigenvalues using permutation $\Pi_n^{-1}(j)$ gives a mismatch of two values (signs), highlighted in red, of $(-1)^{j+1}\lambda_{\Pi_n^{-1}(j)}(T_n(f))$ and $\lambda_{\rho(j)}(H_n(f))$.}
\label{tbl:ordering1}
\begin{tabular}{rrr|r|rrr}
\toprule
$\scriptstyle j$&$\scriptstyle \Pi_n(j)$&$\scriptstyle \Pi_n^{-1}(j)$&$\scriptstyle f(\theta_{j,n})$&$\scriptstyle\lambda_{\Pi_n^{-1}(j)}(T_n(f))$&$\scriptstyle(-1)^{j+1}\lambda_{\Pi_n^{-1}(j)}(T_n(f))$&$\scriptstyle\lambda_{\rho(j)}(H_n(f))$\\
\midrule
 1&6&10& 1.8007&1.8193$\hphantom{^\smallsquare}$&1.8193&1.8193\\
 2&7&9& 1.2567&1.3255$\hphantom{^\smallsquare}$&-1.3255&-1.3255\\
 3&5&8& 0.5125&0.6502$\hphantom{^\smallsquare}$&0.6502&0.6502\\
 4&8&6&-0.2394&-0.1369$\hphantom{^\smallsquare}$&0.1369&0.1369$\hphantom{^\smallsquare}$\\
 5&9&3&-0.8172&{\color{myblue}-0.6886$^\smallsquare$}&{\color{myred}-0.6886}&{\color{myred}0.6886}\\
 6&4&1&-1.1018&-1.0000$\hphantom{^\smallsquare}$&1.0000&1.0000\\
 7&10&2&-1.0703&-1.0000$\hphantom{^\smallsquare}$&-1.0000&-1.0000\\
 8&3&4&-0.7972&{\color{mypink}-0.6386$^\circ$}&{\color{myred}0.6386}&{\color{myred}-0.6386}\\
 9&2&5&-0.4258&-0.3309$\hphantom{^\smallsquare}$&-0.3309&-0.3309\\
 10&1&7&-0.1182&0.0000$\hphantom{^\smallsquare}$&0.0000&0.0000\\
\bottomrule
\end{tabular}
\end{table}

\noindent In  Table~\ref{tbl:ordering2} we show the alternative permutations $\tilde{\Pi}_n(j)$ and $\tilde{\Pi}_n^{-1}(j)$; defined such that $\lambda_3$ and $\lambda_4$ are switched in $\tilde{\Pi}_n(j)$. As seen, the values $(-1)^{j+1}\lambda_{\tilde{\Pi}_n^{-1}(j)}(T_n(f))$ and $\lambda_{\tilde{\rho}(j)}(H_n(f))$ (where $\tilde{\rho}(j)$ is the equivalent ordering as $\rho(j)$ but for $\tilde{\Pi}_n^{-1}(j)$ instead of $\Pi_n^{-1}(j)$) now match.
In the right panel of Figure~\ref{fig:ordering} we report the eigenvalues with the new ordering $\tilde{\Pi}_n^{-1}(j)$.

\begin{table}[H]
\centering
\caption{[Ordering of eigenvalues (non-monotone symmetric symbol, $f(\theta)=\cos(\theta)+\cos(2\theta)$))] Ordering of the eigenvalues using permutation $\tilde{\Pi}_n^{-1}(j)$, where two eigenvalues have switched places, does not give a mismatch of $(-1)^{j+1}\lambda_{\tilde{\Pi}_n^{-1}(j)}(T_n(f))$ and $\lambda_{\tilde{\rho}(j)}(H_n(f))$ (as in Table~\ref{tbl:ordering1}).}
\label{tbl:ordering2}
\begin{tabular}{rrr|r|rrr}
\toprule
 $\scriptstyle j$&$\scriptstyle \tilde{\Pi}_n(j)$&$\scriptstyle \tilde{\Pi}_n^{-1}(j)$&$\scriptstyle f(\theta_{j,n})$&$\scriptstyle\lambda_{\tilde{\Pi}_n^{-1}(j)}(T_n(f))$&$\scriptstyle(-1)^{j+1}\lambda_{\tilde{\Pi}_n^{-1}(j)}(T_n(f))$&$\scriptstyle\lambda_{\tilde{\rho}(j)}(H_n(f))$\\
 \midrule
 1&6&10& 1.8007&1.8193$\hphantom{^\smallsquare}$&1.8193&1.8193\\
 2&7&9& 1.2567&1.3255$\hphantom{^\smallsquare}$&-1.3255&-1.3255\\
 3&8&8& 0.5125&0.6502$\hphantom{^\smallsquare}$&0.6502&0.6502\\
 4&5&6&-0.2394&-0.1369$\hphantom{^\smallsquare}$&0.1369&0.1369\\
 5&9&4&-0.8172&{\color{mypink}-0.6386$^\circ$}&-0.6386&-0.6386\\
 6&4&1&-1.1018&-1.0000$\hphantom{^\smallsquare}$&1.0000&1.0000\\
 7&10&2&-1.0703&-1.0000$\hphantom{^\smallsquare}$&-1.0000&-1.0000\\
 8&3&3&-0.7972&{\color{myblue}-0.6886$^\smallsquare$}&0.6886&0.6886\\
 9&2&5&-0.4258&-0.3309$\hphantom{^\smallsquare}$&-0.3309&-0.3309\\
 10&1&7&-0.1182&0.0000$\hphantom{^\smallsquare}$&0.0000&0.0000\\
\bottomrule
\end{tabular}
\end{table}
\end{remark}

\subsection{Numerical verification 1 of Remark~\ref{rmrk:nonmon}}
\label{sec:numver1}

A closely related matrix, to the Toeplitz matrix $T_n(f)$ in Remark~\ref{rmrk:nonmon}, is the Toeplitz-like matrix $T_{n,0,0}(f)=T_n(f)+R_{n,0,0}(f)$ where $R_{n,0,0}(f)$, for $f(\theta)=\cos(\theta)+\cos(2\theta)$, is a low-rank matrix with $-1/2$ in the top left and bottom right corners. More generally, for a generic real-valued, even, continuous function $f$,  by $T_{n,\varepsilon,\varphi}(f)$ we indicate the matrix belonging to the $\tau_{\varepsilon,\varphi}$-algebra (see e.g.  \cite{BC-tau,Bolten2022,taubozzo,taueconomy}), generated by the function $f$. 
In this setting the $\tau_{\varepsilon,\varphi}$-algebra is generated by the tridiagonal matrix
\begin{align*}
T_{n,\varepsilon,\varphi}= T_{n,\varepsilon,\varphi}(2\cos(\theta))&=T_n(2\cos(\theta))+R_{n,\varepsilon,\varphi}\\
&=\left[
\begin{array}{rrrrrrrrrrr}
\varepsilon &1\\
1 &0 &1\\
&\ddots&\ddots&\ddots\\
&&1&0&1\\
&&&1 &\varphi
\end{array}
\right]
\end{align*}
where $\varepsilon,\varphi\in\mathbb{R}$. For $\varepsilon,\varphi\in\{-1,0,1\}$ the eigendecomposition $T_{n,\varepsilon,\varphi}(f)=\mathbb{Q}_{n,\varepsilon,\varphi}\mathrm{diag}(f(\theta_{j,n,\varepsilon,\varphi}))\mathbb{Q}_{n,\varepsilon,\varphi}^T$ is known explicitly, where for every combination $\varepsilon,\varphi$ we have an orthogonal eigenvector matrix $\mathbb{Q}_{n,\varepsilon,\varphi}$ and an equispaced grid $\theta_{j,n,\varepsilon,\varphi}$. Notice that for $\varepsilon =\varphi =0$, we obtain that the generator $T_{n,0,0}= T_{n,0,0}(2\cos(\theta))=T_n(2\cos(\theta))$ and $R_{n,0,0}=0$ so that the algebra $\tau_{0,0}$ is the standard $\tau$ algebra \cite{BC-tau}, containing the standard one-dimensional Laplacian with generating function $2-2\cos(\theta)$.

Hence, for all matrices $T_{n,\varepsilon,\varphi}(f)$, where $\varepsilon,\varphi\in\{-1,0,1\}$ we know the full eigendecomposition and ``perfect grids'' $\xi_{j,n}=\theta_{j,n,\varepsilon,\varphi}$; e.g., $\lambda_j(T_{j,0,0}(f))=f(\theta_{j,n,0,0})$, $\theta_{j,n,0,0}=j\pi/(n+1)$ and $\mathbb{Q}_{n,0,0}$ is the discrete sine transform, DST.
 Therefore, the eigenvectors of $T_n(f)$ are closely related to the DST (asymptotically they coincide), and we can assume that eigenvector $\mathbf{v}_{\Pi_n^{-1}(j)}$ of $T_n(f)$ should behave approximately as $\sin(j\theta_{i,n})$.

In Figure~\ref{fig:evflip} we show the fifth (left panels) and eighth (right panels) DST (non-normalized) eigenvectors ($\sin(5\theta)$ and $\sin(8\theta)$).

In the top two panels of Figure~\ref{fig:evflip} we show the fifth and eighth eigenvectors using the permutation $\Pi_n^{-1}(j)$ (the original $\lambda_3(T_n(f))$ and $\lambda_4(T_n(f))$). A clear mismatch is present and perturbing the grid $\theta_{j,n}$ will not yield a ``perfect grid'' $\xi_{j,n}$ to match the eigenvector elements with the DST.
In the bottom two  panels of Figure~\ref{fig:evflip} we show the fifth and eighth eigenvectors using the permutation $\tilde{\Pi}_n^{-1}(j)$ (the original $\lambda_4(T_n(f))$ and $\lambda_3(T_n(f))$). A much better match between the eigenvector elements and the DST is present.

\begin{figure}[H]
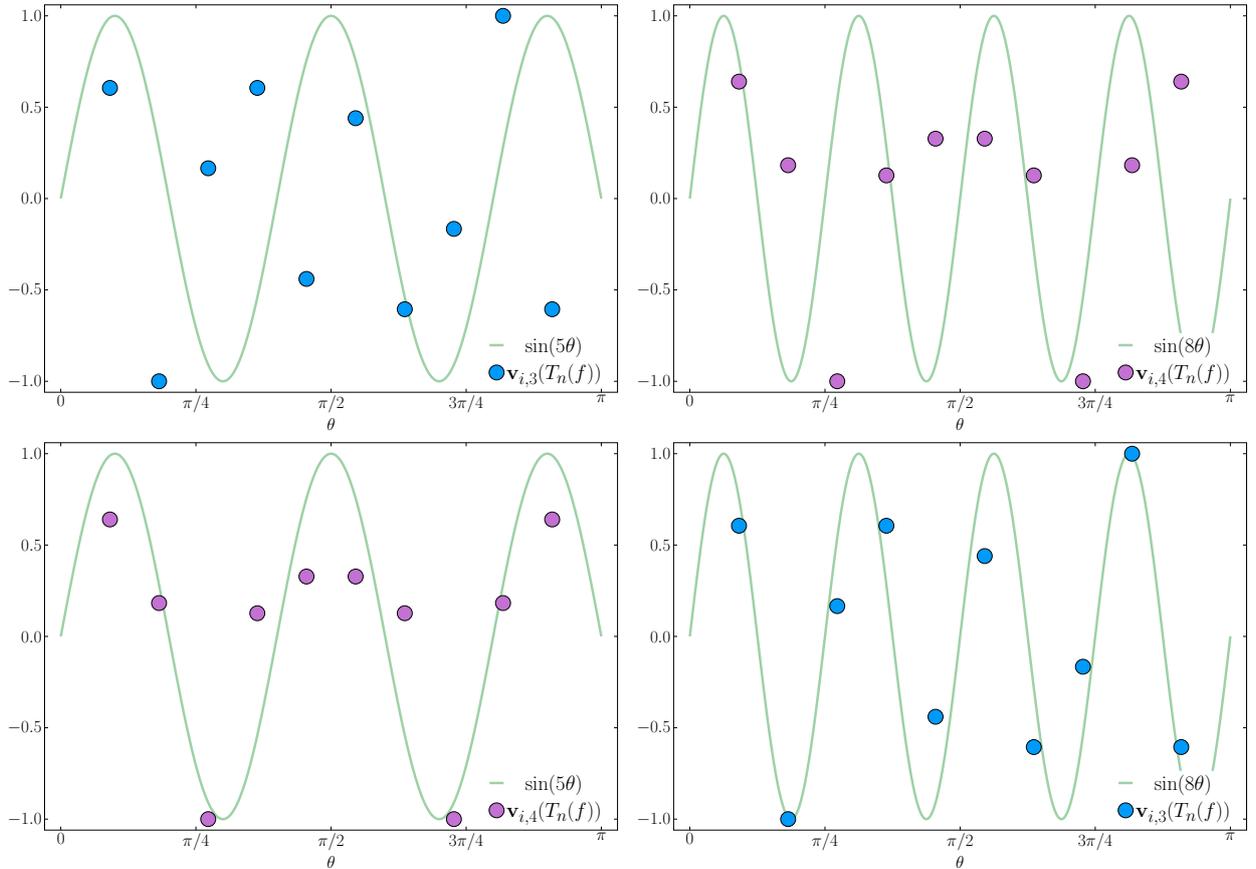

\centering
\includegraphics[width=0.48\textwidth]{ev5.pdf}
\includegraphics[width=0.48\textwidth]{ev8.pdf}

\includegraphics[width=0.48\textwidth]{ev5_flipped.pdf}
\includegraphics[width=0.48\textwidth]{ev8_flipped.pdf}
\caption{[Ordering of eigenvalues (non-monotone symmetric symbol, $f(\theta)=\cos(\theta)+\cos(2\theta)$)] Left panels: Eigenvector five, with permutation $\Pi_n^{-1}(j)$ (top) and $\tilde{\Pi}_n^{-1}(j)$ (bottom). Right panels: Same as left panels, but for eigenvector eight.}
\label{fig:evflip}
\end{figure}

In Figure~\ref{fig:evflip2} we present a numerically computed (non-unique) grid $\xi_{j,n}$ such that the eigenvector elements matches the DST. Again, note that the DST is simply an approximation of the eigenvectors of $T_n(f)$, but Figures~\ref{fig:evflip} and \ref{fig:evflip2} are good indications that the modified permutation $\tilde{\Pi}_n^{-1}(j)$ is a better ordering of the eigenvalues, than $\Pi_n^{-1}(j)$.
\begin{figure}[H]
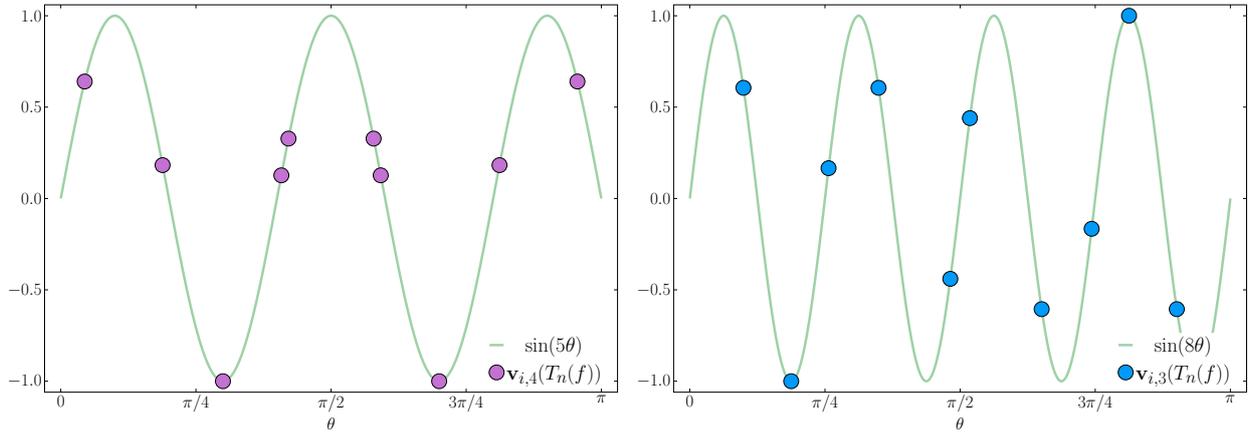

\centering
\includegraphics[width=0.48\textwidth]{ev5_flipped_exact.pdf}
\includegraphics[width=0.48\textwidth]{ev8_flipped_exact.pdf}
\caption{[Ordering of eigenvalues (non-monotone symmetric symbol, $f(\theta)=\cos(\theta)+\cos(2\theta)$)] Left panel: Eigenvector five,  $\tilde{\Pi}_n^{-1}(j)$ using a numerically computed ``perfect grid''. Right panel: Same as left panel, but for eigenvector eight.}
\label{fig:evflip2}
\end{figure}

\subsection{Numerical verification 2 of Remark~\ref{rmrk:nonmon}}
\label{sec:numver2}
In Section~\ref{sec:numver1} we assumed that we knew that eigenvectors five and eight (after reordering according to the symbol) should be switched. If this type of ``true ordering'' of eigenvalues of Toeplitz matrices $T_n(f)$, generated by non-monotone symbols, is to be used in practical applications (e.g., matrix-less methods) we suggest a more automatic approach. We here propose an outline of an algorithm to find the ``true ordering'' of the eigenvalues, for a given $n$.
The algorithm can be summarized as follows:
We are interested in the ordering of the eigenvalues of a matrix $T_n(f)$, generated by the symbol $f(\theta)$.
Construct a matrix $\hat{T}_n(f)$, which has the same symbol $f$, where the full eigendecomposition is known. This can for example be the matrices generated by the $\tau_{\varepsilon,\varphi}$-algebras~\cite{BC-tau,Bolten2022,taubozzo,taueconomy}. 
Now define $R_n=\hat{T}_n(f)-T_n(f)$ and $B_n^{(\gamma)}=\hat{T}_n(f)-\gamma R_n$, and $\gamma\in[0,1]$. We now assume that the elements of same index eigenvectors for matrices $B_n^{(\gamma)}$ vary continuously as $\gamma$ is varied from zero to one. Hence, we here study the matrix sequence $\{B_n^{(\gamma)}\}_\gamma$, for $\gamma\in[0,1]$.

\begin{algo}[Automatic ordering $\tilde{\Pi}_n^{-1}(j)$ of eigenvalues]
\label{algo:ordering}
\

\begin{enumerate}
\item Define:
\begin{itemize}
\item $T_n(f)$: Matrix of interest (we note that this approach should work for more general Toeplitz-like matrices);
\item $f(\theta)$: Symbol of the matrix $T_n(f)$ (here assumed to be univariate, scalar valued and real-symmetric for simplicity);
\item $\hat{T}_{n}(f)$: Matrix, with symbol $f$, for which full eigendecomposition is known (e.g., $T_{n,\varepsilon,\varphi}(f)$, $\varepsilon,\varphi\in\{-1,0,1\}$);
\item $R_n=\hat{T}_{n}(f)-T_n(f)$: Low-rank matrix such that $B_n^{(\gamma)}=\hat{T}_{n}(f)-\gamma R_n$, $\gamma\in[0,1]$. $B_n^{(0)}=\hat{T}_{n}(f)$ and $B_n^{(1)}=T_n(f)$;
\item $N_{steps}$: Number of matrices to generate in the algorithm, from $\hat{T}_{n}(f)$ and $T_n(f)$;
\item $\gamma_k=(k-1)/(N_{steps}-1),\quad k=1,\ldots,N_{steps}$
\end{itemize}
\item We have, $\lambda_j(B_n^{(0)})=f(\xi_{j,n})$ and $\mathbf{v}_j(B_n^{(0)})$ (e.g., if $\hat{T}_n(f)=T_{n,0,0}(f)$, we have $\xi_{j,n}=j\pi/(n+1)$ and $\mathbf{v}_j(B_n^{(0)})=\sqrt{2/(n+1)}\sin(j\xi_{i,n})$).
\item Iterate $k=2,\ldots, N_{steps}$
\begin{enumerate}
\item Numerically compute the eigenvalues and eigenvectors of $B_n^{(\gamma_k)}$, called $\lambda_j(B_n^{(\gamma_k)})$ and $\mathbf{v}_j(B_n^{(\gamma_k)})$. Ordering is given by the numerical solver (often in non-decreasing order);
\item Iterate $j=1,\ldots, n$, and minimize $\epsilon_j=\|\mathbf{v}_r(B_n^{(\gamma_{k-1})})-\mathbf{v}_j(B_n^{(\gamma_k)})\|_2$ for $r\in {1,\ldots, n}$. Call each of these indices $r_j$; this corresponds to the ordering $\tilde{\rho}(j)$ for $\lambda_j(B_n^{(\gamma_k)})$. Take into account,
\begin{itemize}
    \item comparison with both $\mathbf{v}_j(B_n^{(\gamma_k)})$ and $-\mathbf{v}_j(B_n^{(\gamma_k)})$;
    \item for less computational effort, only consider the eigenvectors $\mathbf{v}_r(B_n^{(\gamma_{k-1})})$ with $r$ corresponding to eigenvalues $\lambda_r(B_n^{(\gamma_{k-1})})$ close to the eigenvalue $\lambda_j(B_n^{(\gamma_k)})$;
    \item two different eigenvectors $\mathbf{v}_j(B_n^{(\gamma_k)})$ may minimize the norm $\epsilon_j$ to the same eigenvector $\mathbf{v}_r(B_n^{(\gamma_{k-1})})$.
    \end{itemize}
    \item Reorder the eigenvalues and eigenvectors for step $k$ according to the ordering $r_j$, $j=1,\ldots,n$;
\end{enumerate}
\item The final ordering ($\tilde{\Pi}_n^{-1}(j)$) where $\gamma=1$, should be the ``true ordering'' of $T_n(f)$.
\end{enumerate}
\end{algo}

\noindent For the example in Remark~\ref{rmrk:nonmon}, we have $n=10$, $f(\theta)=\cos(\theta)+\cos(2\theta)$, and the choice of $\hat{T}_n(f)=T_{n,0,0}(f)$,
\begin{align*}
T_n(f)=\frac{1}{2}\left[\begin{array}{ccccccccccc}
0&1&1\\
1&0&1&1\\
1&1&0&1&1\\
&\ddots&\ddots&\ddots&\ddots&\ddots\\
&&1&1&0&1&1\\
&&&1&1&0&1\\
&&&&1&1&0
\end{array}\right],\qquad
\hat{T}_n(f)=T_{n,0,0}(f)=\frac{1}{2}\left[\begin{array}{ccccccccccc}
-1&1&1\\
1&0&1&1\\
1&1&0&1&1\\
&\ddots&\ddots&\ddots&\ddots&\ddots\\
&&1&1&0&1&1\\
&&&1&1&0&1\\
&&&&1&1&-1
\end{array}\right],
\end{align*}
and
\begin{align*}
R_n=\hat{T}_n(f)-T_n(f)=-\frac{1}{2}\left[\begin{array}{ccccccccccc}
1\\
&0\\
&&\ddots\\
&&&0\\
&&&&1
\end{array}\right],
\end{align*}
and $B_n^{(\gamma)}=T_{n,0,0}(f)-\gamma R_n$.

In Figure~\ref{fig:sequence_eig} we show the ten eigenvalues, ordered as $\Pi_n^{-1}(j)$ (left) and, automatically by Algorithm~\ref{algo:ordering}, $\tilde{\Pi}_n^{-1}(j)$ (right), for the matrices $B_n^{(\gamma)}$ as $\gamma$ is varied from 0 to 1 over $N_{steps}=100$ steps. The dashed black line indicates a $\gamma_b=(103-13\sqrt{41})/80\approx 0.247$ where eigenvalues five and eight of $\Pi_n^{-1}(j)$ switch places (visible as the eigenvalue curves cross, and $\lambda_5(B_n^{(\gamma_b)})=\lambda_8(B_n^{(\gamma_b)})=(9+\sqrt{41})/20\approx 0.770$).
\begin{figure}[H]
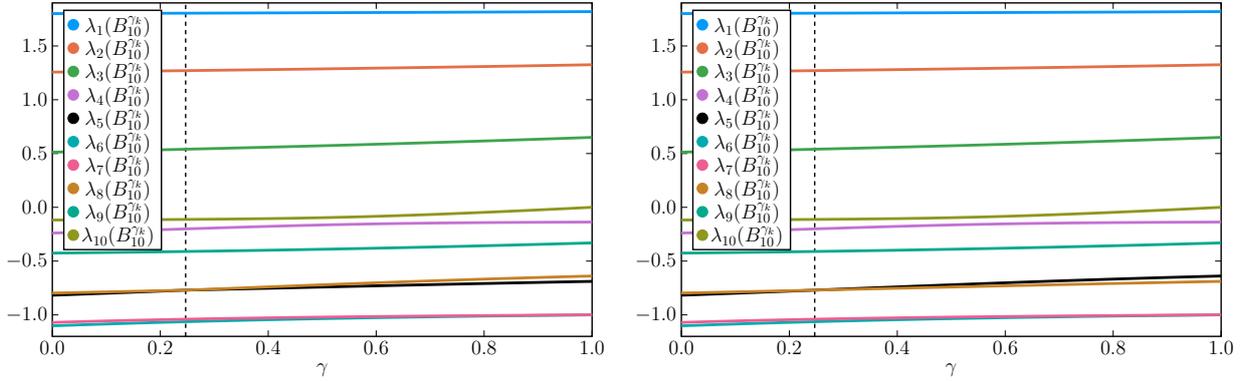

\centering
\includegraphics[width=0.48\textwidth]{exmp3_n10_100.pdf}
\includegraphics[width=0.48\textwidth]{exmp1_n10_100.pdf}
\caption{[Ordering of eigenvalues (non-monotone symmetric symbol, $f(\theta)=\cos(\theta)+\cos(2\theta)$)] Eigenvalues for $B_{10}^{(\gamma_k)}$, $k=1,\ldots,N_{steps}$ with ordering  $\Pi_n^{-1}(j)$ (left) and $\tilde{\Pi}_n^{-1}(j)$ (right). In the left panel, the switch of $\lambda_5$ and $\lambda_8$ does not occur at $\gamma_b$ (dashed black line), whereas it does so in the right panel.}
\label{fig:sequence_eig}
\end{figure}
In Figure~\ref{fig:sequence} we show the eigenvector elements for eigenvector five (left) and eight (right) for the matrices $B_n^{(\gamma_k)}$, $k=1,\ldots,N_{steps}=100$. On top we sort the eigenvalues (and corresponding eigenvectors) solely comparing with the samplings $f(\theta_{j,n})$, i.e. $\Pi_n^{-1}(j)$. In the bottom we switch (automatically, using Algorithm~\ref{algo:ordering}) the eigenvectors five and eight for all $\gamma\geq\gamma_b$, and consequently we obtain the resulting ordering $\tilde{\Pi}_n^{-1}(j)$. Note that here $\lambda_5$ and $\lambda_8$ are the third and fourth eigenvalues in the original non-decreasing monotone ordering by the numerical solver.

Yellow circles denote the eigenvector elements for $\gamma=0$, that is $T_{n,0,0}(f)$. Blue circles correspond to the eigenvector elements for intermediate matrices; as $\gamma$ increases, so does the size of the circles. Red circles corresponds to the eigenvector elements of $T_n(f)$. All vectors in the figure are normalized (and with choice of sign to correspond to the signs of the vectors forming the DST matrix).

\begin{figure}[H]
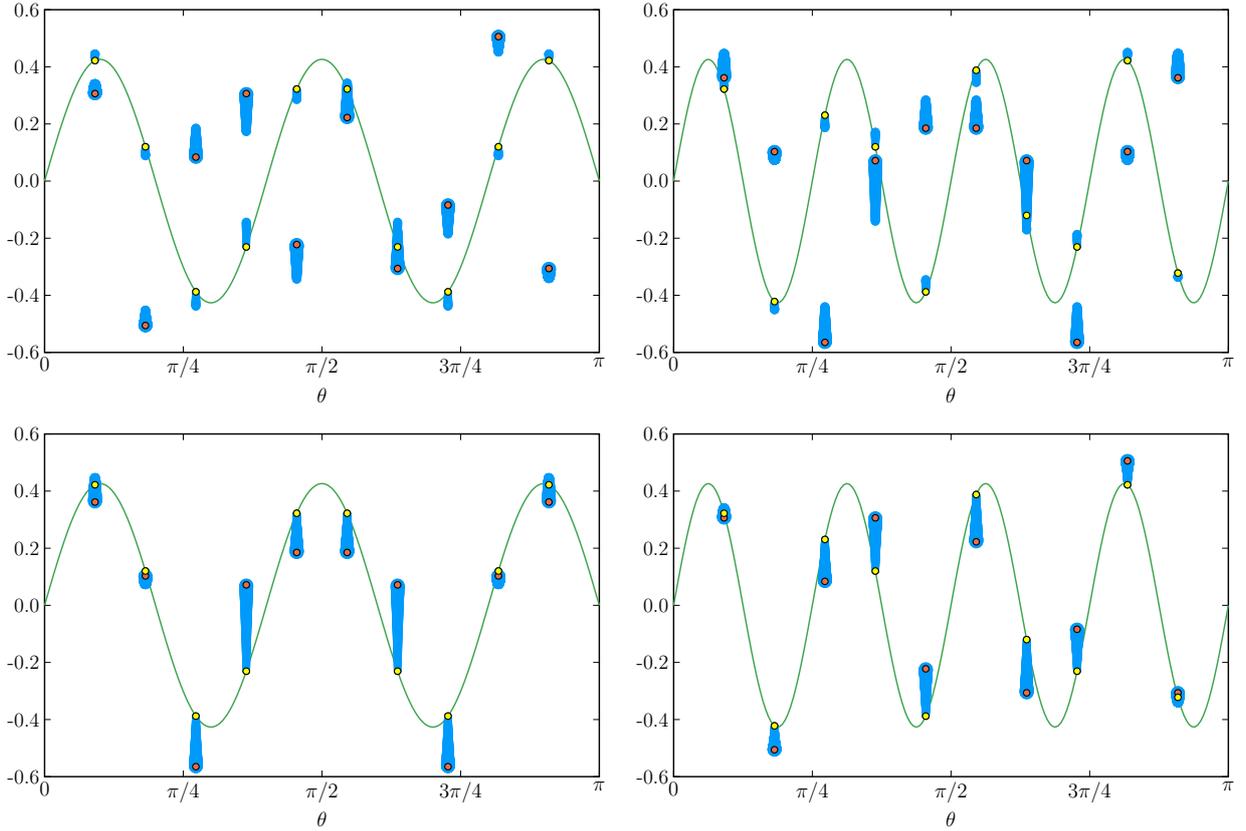

\centering

\includegraphics[width=0.48\textwidth]{exmp3_n10_v5_100.pdf}
\includegraphics[width=0.48\textwidth]{exmp3_n10_v8_100.pdf}

\includegraphics[width=0.48\textwidth]{exmp1_n10_v5_100.pdf}
\includegraphics[width=0.48\textwidth]{exmp1_n10_v8_100.pdf}

\caption{[Ordering of eigenvalues (non-monotone symmetric symbol, $f(\theta)=\cos(\theta)+\cos(2\theta)$)] Eigenvectors five (left) and eight (right) for $B_{10}^{(\gamma_k)}$, $k=1,\ldots,N_{steps}$ with ordering $\Pi_n^{-1}(j)$ (top) and $\tilde{\Pi}_n^{-1}(j)$ (bottom). Clear erratic behavior is visible in the top panels where the switch does not occur for the ordering of eigenvalues five and eight.}
\label{fig:sequence}
\end{figure}

We here also notice that the degenerate eigenvalues $\lambda_6(T_n(f))=\lambda_7(T_n(f))=1$, of $T_n(f)$ for $n=10$, will typically yield ``erratic'' eigenvectors with respect to the sequence of eigenvector elements, as shown in Figure~\ref{fig:sequence_v67}.
\begin{figure}[H]
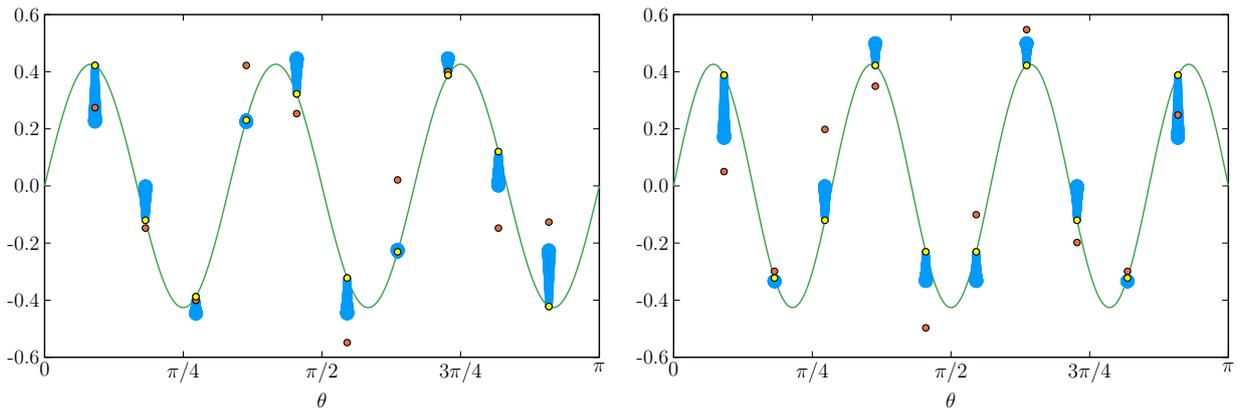

\centering
\includegraphics[width=0.48\textwidth]{exmp1_n10_v6_100.pdf}
\includegraphics[width=0.48\textwidth]{exmp1_n10_v7_100.pdf}
\caption{[Ordering of eigenvalues (non-monotone symmetric symbol, $f(\theta)=\cos(\theta)+\cos(2\theta)$)] Eigenvectors six and seven, for $\gamma=1$ (red circles), correspond to degenerative eigenvalues (equal to one), and the numerically computed eigenvectors behave ``erratic'' with respect to the sequence of elements.}
\label{fig:sequence_v67}
\end{figure}

Finally, we mention that as shown in Figure~\ref{fig:sequence_v67}, also $\hat{T}_n(f)$ (or any of the matrices $B_n^{(\gamma_k)}$) can have degenerate eigenvalues. In Figure~\ref{fig:sequence_n9} we see that $T_{n,0,0}(f)$ (left) has two eigenvalues that coincide, whereas $T_{n,1,1}(f)$ (right) does not. However, $T_{n,1,1}(f)$ does have two eigenvalues that switch order (four and nine), but that is handled automatically by Algorithm~\ref{algo:ordering}. Also, the presumed signs of the eigenvalues of $H_n(f)$ match this computed ordering.
\begin{figure}[H]
\centering
\includegraphics[width=0.48\textwidth]{exmp1_n9_100.pdf}
\includegraphics[width=0.48\textwidth]{exmp2_n9_100.pdf}
\caption{[Ordering of eigenvalues (non-monotone symmetric symbol, $f(\theta)=\cos(\theta)+\cos(2\theta)$)] Eigenvalues for $B_9^{\gamma_k}$ with $\hat{T}_n(f)=T_{n,0,0}(f)$ (left) and $\hat{T}_n(f)=T_{n,1,1}(f)$ (right).}
\label{fig:sequence_n9}
\end{figure}

\subsection{Numerical verification 1 of Conjecture~\ref{conj:nonsym-eigval}}
We study the two non-symmetric real matrices $T_n(f_1)$ and $T_n(f_2)$ generated by $f_1(\theta)=1+\E^{\mathbf{i}\theta}$ and  $f_1(\theta)=1-\E^{\mathbf{i}\theta}$,
\begin{equation*}
T_n(f_1)=\left[
\begin{array}{rrrrrrrrr}
1\\
1&1\\
&\ddots&\ddots\\
&&1&1
\end{array}
\right],\qquad
T_n(f_2)=\left[
\begin{array}{rrrrrrrrr}
1\\
-1&1\\
&\ddots&\ddots\\
&&-1&1
\end{array}
\right].
\end{equation*}
Since the entries of $T_n(f)$ are real, we have $\sigma_j(T_n(f))=\sqrt{\lambda_j(T_n(f)^\textsc{t}T_n(f))}$, where here the corresponding positive definite matrices are
\begin{equation*}
T_n(f_1)^\textsc{t}T_n(f_1)=\left[
\begin{array}{rrrrrrrrr}
2&1\\
1&2&1\\
&\ddots&\ddots&\ddots\\
&&1&2&1\\
&&&1&1
\end{array}
\right],\qquad
T_n(f_2)^\textsc{t}T_n(f_2)=\left[
\begin{array}{rrrrrrrrr}
2&-1\\
-1&2&-1\\
&\ddots&\ddots&\ddots\\
&&-1&2&-1\\
&&&-1&1
\end{array}
\right],
\end{equation*}
with associated symbols $g_1(\theta)=|f_1(\theta)|^2=f_1(-\theta)f_1(\theta)=2+2\cos(\theta)$ and $g_2(\theta)=|f_2(\theta)|^2=f_2(-\theta)f_2(\theta)=2-2\cos(\theta)$. The matrix $T_n(f_1)^\textsc{t}T_n(f_1)$ is the matrix $T_{n,0,-1}(g_1)$, belonging to the $\tau_{0,-1}$-algebra, where the eigenvalues are given exactly by $g_1(\theta_{j,n}^{(0,-1)})$ where $\theta_{j,n}^{(0,-1)}=j\pi/(n+1/2)$. Hence, $\sigma_{\Pi_n^{-1}(j)}(T_n(f_1))=\sqrt{g_1(\theta_{j,n}^{(0,-1)})}$. Similarly, the singular values $\sigma_{\Pi_n^{-1}(j)}(T_n(f_2))=\sqrt{g_2(\theta_{j,n}^{(0,1)})}$, since $T_n(f_2)^\textsc{t}T_n(f_2)$ belongs to the $\tau_{0,1}$-algebra and $\theta_{j,n}^{(0,1)}=(j-1/2)\pi/(n+1/2)$.

Indeed, $\lambda_{\rho(j)}(H_{n}(f_1))$ and $(-1)^{j+1}\sqrt{g_1(\theta_{j,n}^{(0,-1)})}$ (and $\lambda_{\rho(j)}(H_{n}(f_2))$ and $(-1)^{j+1}\sqrt{g_2(\theta_{j,n}^{(0,1)})}$) match. Furthermore, we observe that $\lambda_{\rho(j)}(H_{n}(f_1))=-\lambda_{\rho(j)}(H_{n}(f_2))$.

\begin{proof}
We first study the symbol $f_1(\theta)=1+\E^{\mathbf{i}\theta}$, where
\begin{align}
H_n(f_1)&=
\left[
\begin{array}{rrrrrrrrr}
&&1&1\\
&\iddots&\iddots\\
1&1\\
1\\
\end{array}.
\right]
\end{align}
By a permutation matrix $P$, we have
\begin{align}
P^{-1}H_n(f_1)P&=
\left[
\begin{array}{rrrrrrrrr}
1&1\\
1&0&1\\
&\ddots&\ddots&\ddots\\
&&1&0&1\\
&&&1&0
\end{array}
\right].
\end{align}
This permuted matrix is the generated matrix $T_{n,1,0}(2\cos(\theta))$, by the $\tau_{1,0}$-algebra, and the eigenvalues are given exactly by,

\begin{align}
\lambda_j(P^{-1}H_n(f_1)P)&=2\cos(\theta_{j,n}^{1,0}),\qquad \theta_{j,n}^{1,0}=\frac{(j-1/2)\pi}{n+1/2}.
\end{align}
Note that the ordering, with this sampling grid, of these eigenvalues does not correspond to the ordering of the eigenvalues $\lambda_j(H_n(f_1))$, assuming Conjecture~\ref{conj:nonsym-eigval} is correct and the true ordering is given by
\begin{align}
\lambda_j(H_n(f_1))=(-1)^{j+1}\sqrt{2+2\cos\left(\frac{j\pi}{n+1/2}\right)}=(-1)^{j+1}2\cos\left(\frac{j\pi}{2n+1}\right), \quad j=1,\ldots,n.
\label{eq:nonsymnormaleigs}
\end{align}
Hence, we show that the set of samplings of \eqref{eq:nonsymnormaleigs} and
\begin{align}
2\cos\left(\frac{(j-1/2)\pi}{n+1/2}\right)=2\cos\left(\frac{(2j-1)\pi}{2n+1}\right), \quad j=1,\ldots,n,
\label{eq:nonsymnormaleigs2}
\end{align}
coincide (not the same order).

We note that for all odd $j$, the quantity in \eqref{eq:nonsymnormaleigs} is exactly
\begin{align}
2\cos\left(\frac{j\pi}{2n+1}\right),
\end{align}
which is equivalent to $j=1,\ldots,\lceil n/2\rceil$ of \eqref{eq:nonsymnormaleigs2}.

For even $j$, the quantity in \eqref{eq:nonsymnormaleigs} is the same as
\begin{align}
-2\cos\left(\frac{j\pi}{2n+1}\right),
\end{align}
which is equivalent to $j=n,n-1,\ldots,\lceil n/2\rceil+1$ of \eqref{eq:nonsymnormaleigs2}.

Now we study the symbol $f_2(\theta)=1-\E^{\mathbf{i}\theta}$, where
\begin{align}
H_n(f_2)&=
\left[
\begin{array}{rrrrrrrrr}
&&-1&1\\
&\iddots&\iddots\\
-1&1\\
1\\
\end{array}
\right],
\end{align}
and by a permutation matrix $P$,
\begin{align}
P_1^{-1}H_n(f_2)P_1&=
\left[
\begin{array}{ccccccc}
\left[\begin{array}{rr}
-1&1\\
1&0\\
\end{array}\right]&
\left[\begin{array}{rr}
0&0\\
-1&0\\
\end{array}\right]\\
\left[\begin{array}{rr}
0&-1\\
0&0\\
\end{array}\right]&
\left[\begin{array}{rr}
0&1\\
\phantom{-}1&0\\
\end{array}\right]&
\left[\begin{array}{rr}
0&0\\
-1&0\\
\end{array}\right]\\
&\ddots&\ddots&\ddots
\end{array}
\right].
\end{align}
The matrix-valued symbol of this matrix is
\begin{align}
f_p(\theta)&=\left[\begin{array}{rrrr}
0&1-\E^{\mathbf{i}\theta}\\
1-\E^{-\mathbf{i}\theta}&0
\end{array}
\right],
\end{align}
which can be split into the two eigenvalue functions
\begin{align}
f_p^{(1)}&=\sqrt{2-2\cos(\theta)}=2\sin(\theta/2)\\
f_p^{(2)}&=-\sqrt{2-2\cos(\theta)}=-2\sin(\theta/2)
\end{align}
By direct inspection we find
\begin{align}
\lambda_j(P^{-1}H_n(f_1)P)&=\begin{cases}
f_p^{(1)}(\theta_{\hat{\jmath},\lceil n/2\rceil}^{(1)}), &j \text{ odd},\\
f_p^{(2)}(\theta_{\hat{\jmath},\lfloor n/2\rfloor}^{(2)}), &j \text{ even},\\
\end{cases}\qquad \hat{\jmath}=\lceil j/2\rceil,\\
\theta_{\hat{\jmath},\lceil n/2\rceil}^{(1)}&=\frac{(2\hat{\jmath}-3/2)\pi}{n+1/2},\quad \hat{\jmath}=1,\ldots,\lceil n/2\rceil,\\
\theta_{\hat{\jmath},\lfloor n/2\rfloor}^{(2)}&=\frac{(2\hat{\jmath}-1/2)\pi}{n+1/2}, \quad \hat{\jmath}=1,\ldots,\lfloor n/2\rfloor,
\end{align}
which is equivalent to
\begin{align}
=(-1)^{j+1}\sqrt{2-2\cos\left(\frac{(j-1/2)\pi}{n+1/2}\right)}=(-1)^{j+1}2\sin\left(\frac{(j-1/2)\pi}{2n+1}\right).
\end{align}

\qed
\end{proof}

\subsection{Numerical verification 2 of Conjecture~\ref{conj:nonsym-eigval}}

The generating symbol for the Grcar matrix~\cite{trefethen1991pseudospectra} is $f(\theta)=-\E^{\mathbf{i}\theta}+1+\E^{-\mathbf{i}\theta}+\E^{-2\mathbf{i}\theta}+\E^{-3\mathbf{i}\theta}$.
Since we are interested in the singular values $\sigma_j(T_n(f))$ we will now work with the modulus of the symbol $|f(\theta)|$ (left panel in Figure~\ref{fig:nonsym_grcar_symbols}), or more precisely taking the square root of the eigenvalues of the normal matrix $(T_n(f))^\textsc{t}T_n(f)$ which has the symbol $g(\theta)=f(-\theta)f(\theta)=5+4\cos(\theta)+2\cos(2\theta)-2\cos(4\theta)$ (right panel in Figure~\ref{fig:nonsym_grcar_symbols}). The reason for this is that we can construct the matrix $\hat{T}_n(g)$ needed in Algorithm~\ref{algo:ordering}.

\begin{figure}[H]
\centering
\includegraphics[width=0.48\textwidth]{grcar_symbol.pdf}
\includegraphics[width=0.48\textwidth]{grcar_symbol2.pdf}
\caption{[Ordering of singular values (non-monotone non-symmetric symbol, $f(\theta)=-\E^{\mathbf{i}\theta}+1+\E^{-\mathbf{i}\theta}+\E^{-2\mathbf{i}\theta}+\E^{-3\mathbf{i}\theta}$)] Left: Complex valued symbol $f(\theta)$. Right: $g(\theta)=f(-\theta)f(\theta)$ and $|f(\theta)|=\sqrt{g(\theta)}$ on $\theta\in[0,\pi]$.}
\label{fig:nonsym_grcar_symbols}
\end{figure}

Below we show the matrices needed for Algorithm~\ref{algo:ordering} where $\hat{T}_n(g)=T_{n,0,0}(g)$ and the target matrix is $(T_n(f))^\textsc{t}T_n(f)$. Hence, $\hat{T}_n(g)=(T_n(f))^\textsc{t}T_n(f)+R_n$,
\begin{align*}
(T_n(f))^\textsc{t}T_n(f)&=\left[
\begin{array}{rrrrrrrrrrrrrr}
2 & 0 & 0 & 0 & -1  \\
0 & 3 & 1 & 1 & 0 & -1  \\
0 & 1 & 4 & 2 & 1 & 0 & -1  \\
0 & 1 & 2 & 5 & 2 & 1 & 0 & -1  \\
-1 & 0 & 1 & 2 & 5 & 2 & 1 & 0 & -1  \\
& \ddots & \ddots & \ddots & \ddots & \ddots & \ddots & \ddots & \ddots & \ddots  \\
&& -1 & 0 & 1 & 2 & 5 & 2 & 1 & 0 & -1 \\
&&& -1 & 0 & 1 & 2 & 5 & 2 & 1 & 0 \\
&&&& -1 & 0 & 1 & 2 & 5 & 2 & 1 \\
&&&&& -1 & 0 & 1 & 2 & 5 & 2 \\
&&&&&& -1 & 0 & 1 & 2 & 4 \\
\end{array}
\right]\\
\hat{T}_n(g)=T_{n,0,0}(g)&=\left[\begin{array}{rrrrrrrrrrrrrrr}
4 & 2 & 2 & 0 & -1  \\
2 & 6 & 2 & 1 & 0 & -1  \\
2 & 2 & 5 & 2 & 1 & 0 & -1  \\
0 & 1 & 2 & 5 & 2 & 1 & 0 & -1  \\
-1 & 0 & 1 & 2 & 5 & 2 & 1 & 0 & -1  \\
& \ddots & \ddots & \ddots & \ddots & \ddots & \ddots & \ddots & \ddots & \ddots  \\
&& -1 & 0 & 1 & 2 & 5 & 2 & 1 & 0 & -1 \\
&&& -1 & 0 & 1 & 2 & 5 & 2 & 1 & 0 \\
&&&& -1 & 0 & 1 & 2 & 5 & 2 & 2 \\
&&&&& -1 & 0 & 1 & 2 & 6 & 2 \\
&&&&&& -1 & 0 & 2 & 2 & 4 \\
\end{array}
\right]\\
R_n&=\left[
\begin{array}{rrrrrrrrrrrrrrr}
2 & 2 & 2  \\
2 & 3 & 1  \\
2 & 1 & 1  \\
\\
&&& & & & 1 \\
&&& & & 1 &\\
&&& & 1 & &
\end{array}.
\right]
\end{align*}

In Figure~\ref{fig:eig_nonsym1} we show the square root of the eigenvalues (such that, the sequence yield the singular values of the true target matrix $T_n(f)$), for $n=10$, for all $B_n^{(\gamma_k)}$, with $N_{steps}=100$. The numbering in the figure is after the permutation $\tilde{\Pi}_n^{-1}(j)$.
The yellow boxes indicate where it is visible that Algorithm~\ref{algo:ordering} fails to swap singular values $\sigma_5$ and $\sigma_{10}$, two times. However, since the swap fails twice the resulting ordering is correct. The blue circle indicates an erroneous ordering, when comparing with the signs of $\lambda_j(H_n(f))$, as indicated in Table~\ref{tab:nonsym_n10}
\begin{figure}[H]
\centering
\includegraphics[width=0.48\textwidth]{exmp0_e0_p0_n10_100.pdf}
\caption{[Ordering of singular values (non-monotone non-symmetric symbol, $f(\theta)=-\E^{\mathbf{i}\theta}+1+\E^{-\mathbf{i}\theta}+\E^{-2\mathbf{i}\theta}+\E^{-3\mathbf{i}\theta}$)] $g(\theta)=f(\theta)f(-\theta)$, and $\hat{T}_{n}(g)=T_{n,0,0}(g)$.}
\label{fig:eig_nonsym1}
\end{figure}

In Table~\ref{tab:nonsym_n10} we see that indeed $\sigma_5$  and $\sigma_{10}$ can be assumed to be correctly ordered. However, $\sigma_1$ and $\sigma_2$ are wrongly ordered, if Conjecture~\ref{conj:nonsym-eigval} is correct. Increasing $N_{steps}$ to a higher number does not seems to remedy this discrepency.

\begin{table}[H]
\centering
\caption{[Ordering of eigenvalues (non-monotone non-symmetric symbol, $f(\theta)=-\E^{\mathbf{i}\theta}+1+\E^{-\mathbf{i}\theta}+\E^{-2\mathbf{i}\theta}+\E^{-3\mathbf{i}\theta}$)] $\hat{T}_{n}(f)=T_{n,0,0}(f)$.}
\label{tab:nonsym_n10}

\begin{tabular}{rrrrrrrrrrrr}
\toprule
$j$&$\tilde{\Pi}_n(j)$&$\tilde{\Pi}_n^{-1}(j)$&$|f(\theta_{j,n})|$&$\sigma_{\tilde{\Pi}_n^{-1}(j)}(T_n(f))$ & $(-1)^{j+1}\sigma_{\tilde{\Pi}_n^{-1}(j)}(T_n(f))$& $\lambda_{\tilde{\rho}(j)}(H_n(f))$\\
\midrule
1 & 6 & 9 & 3.1128 & 3.0752 & {\color{myblue}3.0752} & -3.0752 \\
2 & 5 & 10 & 3.2412 & 3.1066 & {\color{myblue}-3.1066} & 3.1066 \\
3 & 10 & 8 & 3.0420 & 2.6384 & 2.6384 & 2.6384 \\
4 & 7 & 7 & 2.3741 & 1.9512 & -1.9512 & -1.9512 \\
5 & 9 & 2 & 1.4028 & 1.2089 & 1.2089 & 1.2089 \\
6 & 8 & 1 & 0.9106 & 1.0765 & -1.0765 & -1.0765 \\
7 & 4 & 4 & 1.5209 & 1.4612 & 1.4612 & 1.4612 \\
8 & 3 & 6 & 2.0037 & 1.8592 & -1.8592 & -1.8592 \\
9 & 1 & 5 & 1.9431 & 1.8166 & 1.8166 & 1.8166 \\
10 & 2 & 3 & 1.4191 & 1.2696 & -1.2696 & -1.2696 \\
\bottomrule
\end{tabular}
\end{table}

In Figure~\ref{fig:eigv_nonsym2} we report the ten eigenvector element sequences shown, given by Algorithm~\ref{algo:ordering}. One can clearly see the erratic behavior in eigenvectors five and ten, as previously indicated in Figure~\ref{fig:eig_nonsym1}. However, eigenvectors one and two are visually correct.

\begin{figure}[H]
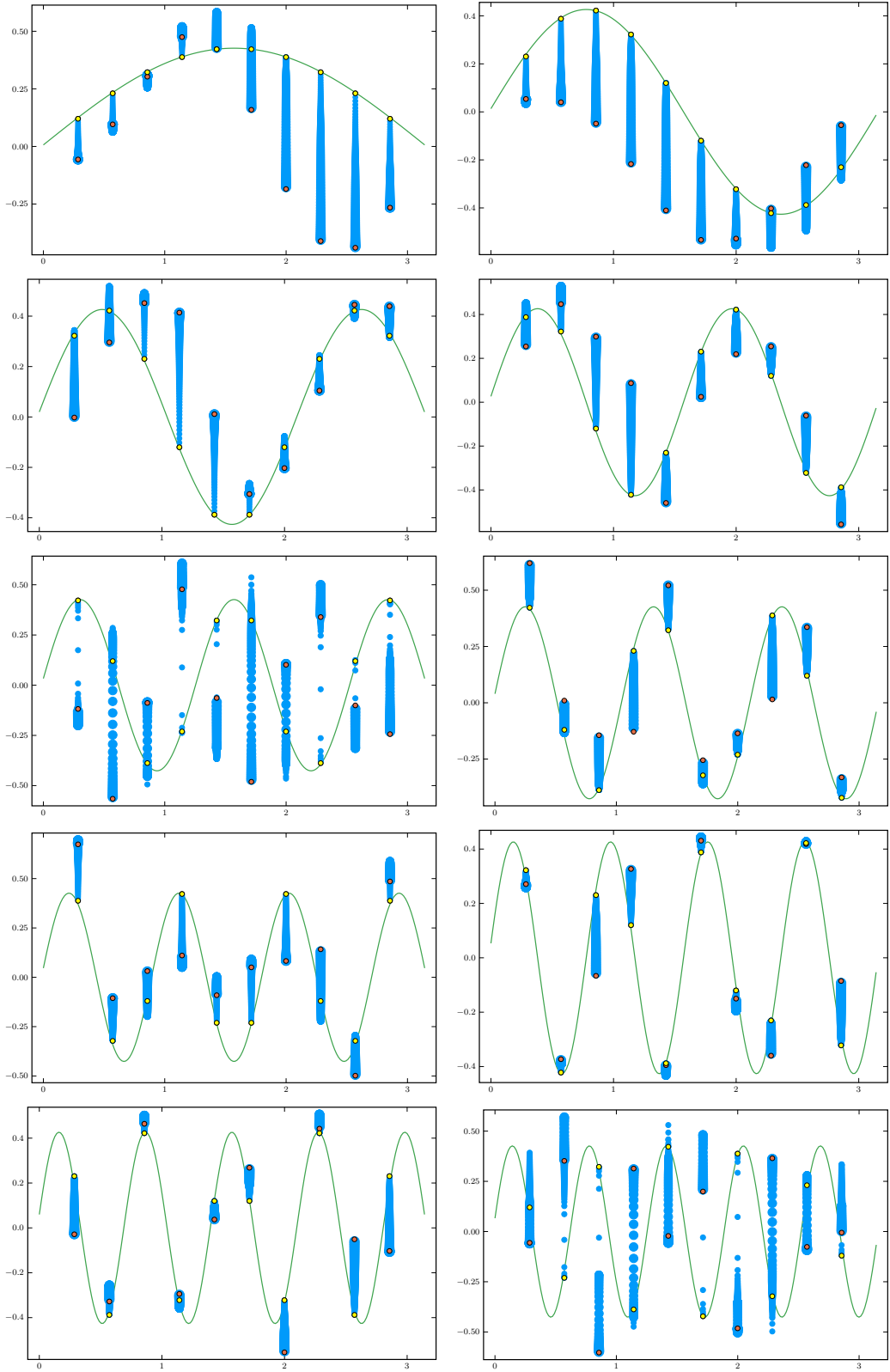

\centering
\includegraphics[width=0.4\textwidth]{exmp0_e0_p0_n10_v1_100.pdf}
\includegraphics[width=0.4\textwidth]{exmp0_e0_p0_n10_v2_100.pdf}

\includegraphics[width=0.4\textwidth]{exmp0_e0_p0_n10_v3_100.pdf}
\includegraphics[width=0.4\textwidth]{exmp0_e0_p0_n10_v4_100.pdf}

\includegraphics[width=0.4\textwidth]{exmp0_e0_p0_n10_v5_100.pdf}
\includegraphics[width=0.4\textwidth]{exmp0_e0_p0_n10_v6_100.pdf}

\includegraphics[width=0.4\textwidth]{exmp0_e0_p0_n10_v7_100.pdf}
\includegraphics[width=0.4\textwidth]{exmp0_e0_p0_n10_v8_100.pdf}

\includegraphics[width=0.4\textwidth]{exmp0_e0_p0_n10_v9_100.pdf}
\includegraphics[width=0.4\textwidth]{exmp0_e0_p0_n10_v10_100.pdf}
\caption{[Ordering of singular values (non-monotone non-symmetric symbol, $f(\theta)=-\E^{\mathbf{i}\theta}+1+\E^{-\mathbf{i}\theta}+\E^{-2\mathbf{i}\theta}+\E^{-3\mathbf{i}\theta}$)] All ten eigenvector sequences, except the fifth and and tenth, seem to be continuous and correct.}
\label{fig:eigv_nonsym2}
\end{figure}
In Figure~\ref{fig:nonsym_n10_allThat} we show the sequences of $B_n^{(\gamma_k)}$ for all combinations of $\hat{T}_{n}(f)=T_{n,\varepsilon,\varphi}(f)$, $\varepsilon,\varphi=\{-1,0,-1\}$. In Table~\ref{tab:nonsym_n10_allThat} are shown the actual yielded orderings. As it can be seen, they all exhibit errors, and the different $\hat{T}_n(f)$ have different bias to the initial ordering, and subsequent result. In Table~\ref{tab:nonsym_n10_allThat2} the corrected orderings are displayed and, given the used Algorithm~\ref{algo:ordering}, four acceptable versions are given.
\begin{figure}[H]
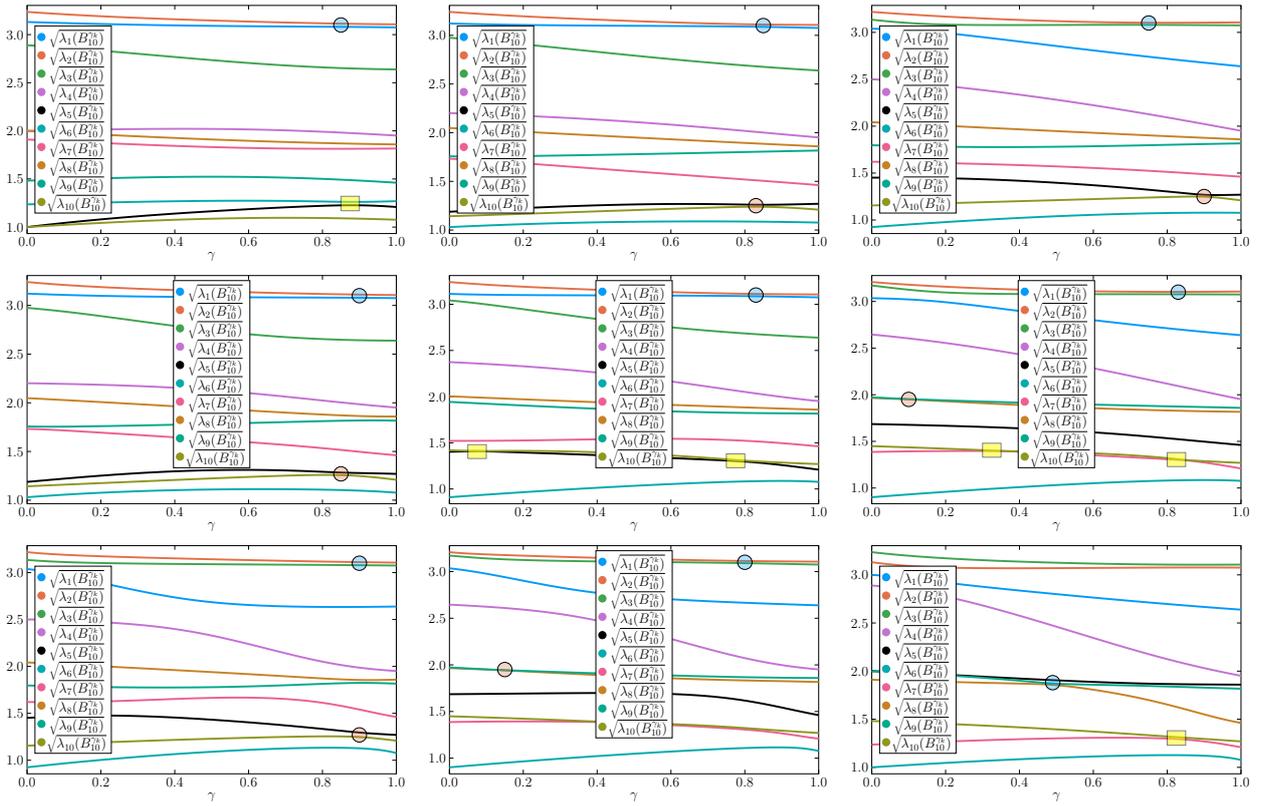

\centering
\includegraphics[width=0.32\textwidth]{exmp0_e-1_p-1_n10_100.pdf}
\includegraphics[width=0.32\textwidth]{exmp0_e-1_p0_n10_100.pdf}
\includegraphics[width=0.32\textwidth]{exmp0_e-1_p1_n10_100.pdf}

\includegraphics[width=0.32\textwidth]{exmp0_e0_p-1_n10_100.pdf}
\includegraphics[width=0.32\textwidth]{exmp0_e0_p0_n10_100.pdf}
\includegraphics[width=0.32\textwidth]{exmp0_e0_p1_n10_100.pdf}

\includegraphics[width=0.32\textwidth]{exmp0_e1_p-1_n10_100.pdf}
\includegraphics[width=0.32\textwidth]{exmp0_e1_p0_n10_100.pdf}
\includegraphics[width=0.32\textwidth]{exmp0_e1_p1_n10_100.pdf}
\caption{[Ordering of eigenvalues (non-monotone non-symmetric symbol, $f(\theta)=-\E^{\mathbf{i}\theta}+1+\E^{-\mathbf{i}\theta}+\E^{-2\mathbf{i}\theta}+\E^{-3\mathbf{i}\theta}$)] $\hat{T}_{n}(f)=T_{n,\varepsilon,\varphi}(f)$ for all combinations $\varepsilon,\varphi=\{-1,0,-1\}$.}
\label{fig:nonsym_n10_allThat}
\end{figure}

\begin{table}[H]
\caption{[Ordering of eigenvalues (non-monotone non-symmetric symbol, $f(\theta)=-\E^{\mathbf{i}\theta}+1+\E^{-\mathbf{i}\theta}+\E^{-2\mathbf{i}\theta}+\E^{-3\mathbf{i}\theta}$)] $\hat{T}_{n}(f)=T_{n,\varepsilon,\varphi}(f)$ for all combinations $\varepsilon,\varphi=\{-1,0,-1\}$. Coloring indicate erroneous ordering, with same color as corresponding panel in Figure~\ref{fig:nonsym_n10_allThat}.}
\label{tab:nonsym_n10_allThat}
\begin{equation*}
\begin{array}{r|rrrrrrrrr}
\toprule
\lambda_j(H_n(f))&(-1,-1)&(-1,0)&(-1,1)&(0,-1)&(0,0)&(0,1)&(1,-1)&(1,0)&(1,1)\\
\midrule
-3.0752&{\color{myblue}3.0752} & {\color{myblue}3.0752} & 2.6384 & {\color{myblue}3.0752} & {\color{myblue}3.0752} & 2.6384 & 2.6384 & 2.6384 & 2.6384 \\
-1.9512&{\color{myblue}-3.1066} & {\color{myblue}-3.1066} & {\color{myblue}-3.1066} & {\color{myblue}-3.1066} & {\color{myblue}-3.1066} & {\color{myblue}-3.1066} & {\color{myblue}-3.1066} & {\color{myblue}-3.1066} & -3.0752 \\
-1.8592&2.6384 & 2.6384 & {\color{myblue}3.0752} & 2.6384 & 2.6384 & {\color{myblue}3.0752} & {\color{myblue}3.0752} & {\color{myblue}3.0752} & 3.1066 \\
-1.2696&-1.9512 & -1.9512 & -1.9512 & -1.9512 & -1.9512 & -1.9512 & -1.9512 & -1.9512 & -1.9512 \\
-1.0765&1.2089 & {\color{myred}1.2696} & {\color{myred}1.2696} & {\color{myred}1.2696} & 1.2089 & 1.4612 & {\color{myred}1.2696} & 1.4612 & {\color{myblue}1.8592} \\
1.2089&-1.2696 & -1.0765 & -1.0765 & -1.0765 & -1.0765 & -1.0765 & -1.0765 & -1.0765 & -1.0765 \\
1.4612&1.8166 & 1.4612 & 1.4612 & 1.4612 & 1.4612 & 1.2089 & 1.4612 & 1.2089 & 1.2089 \\
1.8166&-1.8592 & -1.8592 & -1.8592 & -1.8592 & -1.8592 & {\color{myred}-1.8166} & -1.8592 & {\color{myred}-1.8166} & {\color{myblue}-1.4612} \\
2.6384&1.4612 & 1.8166 & 1.8166 & 1.8166 & 1.8166 & {\color{myred}1.8592} & 1.8166 & {\color{myred}1.8592} & 1.8166 \\
3.1066&-1.0765 & {\color{myred}-1.2089} & {\color{myred}-1.2089} & {\color{myred}-1.2089} & -1.2696 & -1.2696 & {\color{myred}-1.2089} & -1.2696 & -1.2696 \\
\bottomrule
\end{array}
\end{equation*}
\end{table}

\begin{table}[H]
\caption{[Ordering of eigenvalues (non-monotone non-symmetric symbol, $f(\theta)=-\E^{\mathbf{i}\theta}+1+\E^{-\mathbf{i}\theta}+\E^{-2\mathbf{i}\theta}+\E^{-3\mathbf{i}\theta}$)] $\hat{T}_{n}(f)=T_{n,\varepsilon,\varphi}(f)$ for all combinations $\varepsilon,\varphi=\{-1,0,-1\}$. Reordering to correct orderings in Table~\ref{tab:nonsym_n10_allThat}.}
\label{tab:nonsym_n10_allThat2}
\begin{equation*}
\begin{array}{r|rrrrrrrrr}
\toprule
\lambda_j(H_n(f))&(-1,-1)&(-1,0)&(-1,1)&(0,1)\\
&&(0,-1)&(1,-1)&(1,0)\\
&&(0,0)&&(1,1)\\
\midrule
-3.0752& 3.1066& 3.1066 & 2.6384  & 2.6384 \\
-1.9512&-3.0752 & -3.0752 & -3.0752  & -3.0752 \\
-1.8592&2.6384 & 2.6384 & 3.1066 &  3.1066  \\
-1.2696&-1.9512 & -1.9512 & -1.9512  & -1.9512  \\
-1.0765&1.2089 & 1.2089 & 1.2089  & 1.4612  \\
1.2089&-1.2696 & -1.0765 & -1.0765 &  -1.0765 \\
1.4612&1.8166 & 1.4612 & 1.4612 & 1.2089  \\
1.8166&-1.8592 & -1.8592 & -1.8592 & -1.8592 \\
2.6384&1.4612 & 1.8166 & 1.8166 &  1.8166  \\
3.1066&-1.0765 & -1.2696 & -1.2696  & -1.2696 \\
\bottomrule
\end{array}
\end{equation*}
\end{table}

A potential reason for the failure of Algorithm~\ref{algo:ordering} in this example, is that $R_n$ has too many non-zero entries. A possible remedy it that type of situation could be to split up $R_n$ to multiple matrices $R_n^{(i)}$, and have multiple $\gamma_k^{(i)}$, such that,
\begin{align}
T_n(f)=\hat{T}_n(f)-\gamma_k^{(1)}R_n^{(1)}-\gamma_k^{(2)}R_n^{(2)}-\ldots -\gamma_k^{(s)}R_n^{(s)},
\end{align}
where $T_n(f)$ is the target matrix and $\hat{T}_n(f)$ is the matrix with known eigendecomposition.
First let all $\gamma_k^{(i)}$ be zero, increase $\gamma_k^{(1)}$ to one, then $\gamma_k^{(2)}$, and so on.
Another approach to find the true ordering in a case like this to generate a sequence of grids, and then use a matrix-less method, and if the result is non-erratic it can be assumed to be the correct grid.

\section{Conclusions}\label{sec:end}

In a series of recent papers the spectral behavior of the matrix sequence $\{Y_nT_n(f)\}$ has been studied in the sense of the spectral distribution, with  the generating function $f$ being Lebesgue integrable and with real Fourier coefficients. This kind of study was also motivated by computational purposes for the solution of the related large linear systems using the (preconditioned) MINRES algorithm and for the extension of applicability of eigenvalue matrix-less algorithms. Here we have developed further tools, by exploiting also algebraic results such as the Cantoni-Butler Theorem. Indeed when $f$ is real-valued we have proved that
\begin{itemize}
\item $\lambda_j(H_n(f))=(-1)^{j+1}\lambda_j(T_n(f))$, $j=1,\ldots,n$;
\item $\lambda_j(T_n(f)) = f(\xi_{j,n})$, $j=1,\ldots,n$ and $\{\xi_{j,n}\}$ a.u. on $[0,\pi]$, if $f$ is Riemann integrable, with connected range  and with a finite number of local minima, maxima, and discontinuity points;
\item $\lambda_j(T_n(f)) = f(\xi_{j,n})+\psi_{j,n}$, $j=1,\ldots,n$, $\psi_{j,n}$ infinitesimal in $n$, and $\{\xi_{j,n}\}$ a.u. on $[0,\pi]$, if $f$ is Riemann integrable, with connected range and when dropping the restriction on the finite number of local minima, maxima, and discontinuity points.
\end{itemize}
We have also reported further distribution and localization results which are consequences of the above items.

When $f$ is complex-valued, but still with real Fourier coefficients, the same type of localization and distributional findings are obtained, but in connection with $|f|$ and with the singular values of $T_n(f)$.

Several numerical experiments have been reported for giving a visual evidence of the numerical results and for showing better approximations of the spectra of $T_n(f)$ and $H_n(f)$, with the idea of extending the matrix-less procedures to the more challenging setting in which the generating function $f$ is non-monotone.

\bibliographystyle{siam}
\bibliography{hankel}
\end{document}